\newtheorem{theorem}{Theorem}[section]
\newtheorem{lemma}[theorem]{Lemma}
\newtheorem{proposition}[theorem]{Proposition}
\newtheorem{claim}[theorem]{Claim}
\newtheorem{corollary}[theorem]{Corollary}
\newtheorem{question}[theorem]{Question}
\theoremstyle{definition}
\theoremstyle{remark}
\newtheorem{remark}[theorem]{Remark}
\numberwithin{equation}{section}
\begin{document}

% \title[short text for running head]{full title}
%\title{$3$--manifold groups with generalized torsion elements}
\title{Twisted right-angled Artin groups embedded  in knot groups}
%    Only \author and \address are required; other information is
%    optional.  Remove any unused author tags.

%    author one information
% \author[short version for running head]{name for top of paper}

\author[K. Himeno]{Keisuke Himeno}
\address{Graduate School of Advanced Science and Engineering, Hiroshima University,
1-3-1 Kagamiyama, Higashi-hiroshima, 7398526, Japan}
\email{himeno-keisuke@hiroshima-u.ac.jp}
\thanks{The first author was supported by JST SPRING, Grant Number JPMJSP2132. }
%%%
\author[M. Teragaito]{Masakazu Teragaito}
\address{Department of Mathematics and Mathematics Education, Hiroshima University,
1-1-1 Kagamiyama, Higashi-hiroshima 7398524, Japan.}
%\curraddr{}
\email{teragai@hiroshima-u.ac.jp}
\thanks{The second author has been partially supported by JSPS KAKENHI Grant Number JP20K03587.}

%    \subjclass is required.
\subjclass[2020]{Primary 20F36, 57K10}

\date{\today}

%\dedicatory{}

%    "Communicated by" -- provide editor's name; required.
%\commby{}

%    Abstract is required.
\begin{abstract}
Twisted right-angled Artin groups are defined through presentation based on mixed graphs.
Each vertex corresponds to a generator,  each undirected edge yields a commuting relation and
each directed edge gives a Klein bottle relation.
If there is no directed edge, then this reduces to an ordinary right-angled Artin group.

There is a characterization of right-angled Artin groups which can be embedded in knot groups by Katayama.
In this paper, 
we completely determine twisted right-angled Artin groups embedded in knot groups.
\end{abstract}

\keywords{twisted right-angled Artin group, knot group}

\maketitle

%    Text of article.

%%%%%%%%%%%%%%%%%%%%%%%%%%%%%%%%%%%%%%%%%%%
\section{Introduction}\label{sec:intro}

Twisted right-angled Artin groups (abbreviated as TRAAGs) are introduced in \cite{CE} (also \cite{P}) as a natural generalization of right-angled Artin groups (RAAGs).
For TRAAGs, there are a few of recent results \cite{ABP, F}.
In particular, there is a characterization of TRAAGs with torsion, or left ordering.

Recall the definition of TRAAG (see \cite{ABP, F}).
Let $\Gamma$ be a mixed graph $(V,E,D)$, where $\bar{\Gamma}=(V,E)$ is a (finite) simple graph, and
$D$ is a subset of $E$.
An element of $D$ is called a directed edge.
If a directed edge has a tail $x$ and a head $y$, then we denote it by $[x,y\rangle$.
An undirected edge connecting $x$ and $y$ is written as $[x,y]$.
Then the \textit{twisted right-angled Artin group based on $\Gamma$} is
defined as
\[
A(\Gamma)=\langle V \mid xy=yx\ \text{if $[x,y]\in E-D$, and}\ xyx=y\ \text{if $[x,y\rangle\in D$} \rangle.
\]
If $D=\varnothing$, then $A(\Gamma)$ is the ordinary right-angled Artin group based on $\Gamma$.
The second type of  relation $xyx=y$ is called the \textit{Klein  relation} (\cite{F}).

 Droms \cite{D} gave a complete characterization of RAAGs which are $3$-manifold groups, that is, the fundamental groups
 of connected $3$-manifolds.
 Precisely, a RAAG $A(\Gamma)$ is a $3$-manifold group if and only if each connected component of $\Gamma$ 
is a tree or a triangle $K_3$.
This leads us to a natural question.

\begin{question}\label{qu:base}
Which TRAAGs are $3$-manifold groups?
\end{question}

%%%
As the simplest example, let $\Gamma$ be a single  arc.
More precisely, it consists of two vertices and a single directed edge $[a,b\rangle$.
Then $A(\Gamma)=\langle a,b \mid aba=b\rangle$ is isomorphic to the fundamental group of the Klein bottle.
(The Klein relation originates from this.)
Hence $A(\Gamma)$ can be realized as the fundamental group of the twisted $I$-bundle over the Klein bottle.
In general, the answer to Question \ref{qu:base} is widely open.

Katayama \cite{K1} gives a complete characterization of RAAGs which embed into a knot group, the fundamental group
of the complement of a knot in the $3$-sphere.
For readers' convenience, we state his result below (Theorem \ref{thm:katayama}).

In general, if a group $G$ has a subgroup isomorphic to a group $H$, then
we say that $H$ embeds into $G$.
If there is no confusion, then we often write $H\le G$.

Let $K$ be a knot in the $3$-sphere $S^3$.
The knot exterior $E(K)$ admits the torus decomposition, or the JSJ decomposition (see \cite{JS,J,Pu}), where
each piece is hyperbolic or Seifert fibered.
We remark that a Seifert fibered piece is either a composing space, a torus knot exterior or a cable space.
Any of these admits the unique Seifert fibration.
If there are two Seifert fibered pieces glued along a common boundary torus, then
the pair is called a \textit{Seifert-Seifert gluing}.

A complete bipartite graph $K_{1,n}$ with $n\ge 1$ is called a star $T_n$.
A path with $n$ vertices is $P_n$. 
In particular, $P_1$ is a single vertex.
A forest is a graph containing no cycles.
Hence every connected component of a forest is a tree.
These are undirected graphs.
The disjoint union of two graphs $\Gamma_1$ and $\Gamma_2$ is denoted by $\Gamma_1+\Gamma_2$.
Also, $m\Gamma$ denotes  the disjoint union of $m$ copies of $\Gamma$.

Thus $A(\Gamma_1+\Gamma_2)=A(\Gamma_1)*A(\Gamma_2)$.
We note that $A(mP_1)=F_m$,  a free group of rank $m$, $A(P_2)=\mathbb{Z}^2$,
and $A(T_n)=F_n\times \mathbb{Z}$.

\begin{theorem}[\cite{K1}]\label{thm:katayama}
Let $K$ be a non-trivial knot in $S^3$ with exterior $E(K)$ and knot group $G(K)$.
Let $\Gamma$ be an undirected graph, and $A(\Gamma)$ the associated RAAG.
\begin{itemize}
\item[(1)]
If $E(K)$ consists of only hyperbolic pieces, then $A(\Gamma)$ embeds into $G(K)$
if and only if $\Gamma=mP_1+nP_2$ for $m,n \ge 0$.
\item[(2)]
If $E(K)$ is Seifert fibered, that is, $K$ is a torus knot, then
$A(\Gamma)$ embeds into $G(K)$ if and only if
$\Gamma$ is either $mP_1$ or a single star $T_n$ for $m,n\ge 1$.
\item[(3)]
If $E(K)$ contains both of a Seifert fibered piece and a hyperbolic piece, and there is no Seifert-Seifert gluing, then $A(\Gamma)$ embeds into $G(K)$ if and only if 
$\Gamma=mP_1+\sum  T_{n_i}$ with $m\ge 0$.  (Possibly, there is no star.)
\item[(4)]
If $E(K)$ contains a Seifert-Seifert gluing, then $A(\Gamma)$ embeds into $G(K)$ if and only if 
$\Gamma$ is a forest.
\end{itemize}
\end{theorem}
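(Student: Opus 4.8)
The plan is to study an embedding $A(\Gamma)\le G(K)$ through the action of $G(K)$ on the Bass--Serre tree $T$ dual to the torus decomposition of $E(K)$: the vertex stabilizers are the fundamental groups of the hyperbolic or Seifert pieces, and the edge stabilizers are the $\mathbb{Z}^2$'s carried by the decomposing tori. The combinatorial backbone is a dictionary between $A(\Gamma)$ and $\Gamma$: the group $A(\Gamma)$ contains $\mathbb{Z}^3$ exactly when $\Gamma$ contains a triangle $K_3$; it contains $F_2\times\mathbb{Z}$ exactly when some vertex of $\Gamma$ is adjacent to two non-adjacent vertices (an induced $P_3$); and it contains $F_2\times F_2$ whenever $\Gamma$ contains an induced $4$-cycle. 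Because $E(K)$ is a compact orientable irreducible $3$-manifold with non-empty toral boundary, $G(K)$ has no $\mathbb{Z}^3$ and every abelian subgroup is $\mathbb{Z}$ or $\mathbb{Z}^2$. Hence in all four cases $\Gamma$ is forced to be triangle-free, which removes $K_3$ at once and reduces everything to controlling stars (the $P_3$'s) and the free-product decomposition of $\Gamma$ into connected components.

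The decisive local fact is that an $F_2\times\mathbb{Z}$ can only originate from a Seifert piece and an $F_2\times F_2$ from none: in a cusped hyperbolic piece the centralizer of an infinite-order element is abelian, whereas a Seifert piece supplies its central fiber $\mathbb{Z}$ together with a non-abelian free subgroup of its base orbifold, this fiber being the unique rank-one source of an element centralizing a non-abelian free group. Case (1) follows immediately: with no Seifert piece there is no $F_2\times\mathbb{Z}$, so $\Gamma$ has no induced $P_3$; a triangle-free, $P_3$-free graph is a disjoint union of vertices and edges, giving $\Gamma=mP_1+nP_2$, and the converse embeddings are assembled by placing commuting pairs in cusps and applying ping-pong to build $F_m*(\mathbb{Z}^2)^{*n}$. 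For case (2), $G(K)$ is the single torus-knot Seifert group, with center $Z\cong\mathbb{Z}$ and virtually free quotient $G(K)/Z$; since a virtually free group has no $\mathbb{Z}^2$, every $\mathbb{Z}^2\le A(\Gamma)$ meets $Z$. If $\Gamma$ has an edge, this produces a nontrivial element of $A(\Gamma)$ that is central in $G(K)$, hence central in $A(\Gamma)$; such an element forbids any nontrivial free-product splitting and forces a cone vertex, so triangle-freeness pins $\Gamma$ down to a single star $T_n$. If $\Gamma$ has no edge it is $mP_1$. The matching embeddings use $Z$ as the fiber and a free subgroup of the base.

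Cases (3) and (4) hinge on how many Seifert fibers are available and whether they interact across a common torus. With several Seifert pieces but no Seifert--Seifert gluing, each piece carries one central fiber and, as in case (2), can house a connected component with an edge only as a single star; distinct Seifert pieces are separated by hyperbolic pieces and so lie in free-product position on $T$, while isolated vertices contribute free factors. This gives the expected $\Gamma=mP_1+\sum T_{n_i}$, with the matching embeddings obtained by distributing stars among the Seifert pieces and free generators among loxodromic elements. In case (4) a Seifert--Seifert gluing furnishes two independent fibers sharing a boundary torus, and this extra commuting data is exactly what realizes the nested, overlapping $F\times\mathbb{Z}$ blocks of an arbitrary path, and then of an arbitrary tree, by explicit ping-pong and HNN constructions; combined with triangle-freeness this reproduces Droms' characterization (\cite{D}) of three-manifold RAAGs and yields that every forest embeds.

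The main obstacle is the only-if direction once Seifert pieces are present, and it has two genuinely delicate points. First, a connected component of $\Gamma$ need not fix a vertex of $T$: although $A(C)$ is one-ended, the edge groups are $\mathbb{Z}^2$ rather than finite, so $C$ could a priori split over the decomposing tori, and one must show that in case (3) it cannot escape a single Seifert piece while in case (4) this splitting is precisely what becomes available. Second, and most delicate, is proving that embeddability forces a forest in case (4), i.e. that no cycle embeds even with two interacting fibers: the $4$-cycle is excluded because its RAAG is $F_2\times F_2$ and two non-abelian free groups cannot commute in $G(K)$ by the fiber-uniqueness argument above, but a longer cycle contains no induced $4$-cycle and must be ruled out by a global analysis of the induced splitting of $A(\Gamma)$ over the $\mathbb{Z}^2$ edge groups of $T$, showing that closing a path into a cycle over-determines the two fiber directions. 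Converting this picture into rigorous combinatorics, alongside the case-(4) constructions realizing every tree, is the heart of the proof; the rest is bookkeeping with Bass--Serre theory and ping-pong.
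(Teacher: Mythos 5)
First, a point of calibration: the paper does not prove Theorem \ref{thm:katayama} at all --- it is imported from Katayama \cite{K1} --- so your proposal has to be measured against the ingredients that \cite{K1} (and the paper, where it re-derives pieces of this) actually rely on: Droms' theorem \cite{D} combined with Scott's compact core theorem, the analysis of $A(P_4)$ going back to Niblo--Wise \cite{NW}, and Kim--Koberda's embedding $A(F)\le A(P_4)$ for every forest $F$ \cite{KK}. Your general framework (JSJ Bass--Serre tree, centralizers, Seifert fibers as the only source of $F_2\times\mathbb{Z}$) is the right one, and cases (1) and (2) of your sketch are essentially correct: triangle-freeness plus the absence of an induced $P_3$ does force $mP_1+nP_2$, and in the torus-knot case the fact that every $\mathbb{Z}^2$ meets the center, together with the description of the center of a RAAG by dominating vertices, does force a single star.

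However, the parts you yourself call ``the heart of the proof'' are genuinely missing, not merely compressed. (i) Case (4), ``only if'': you exclude triangles (via $\mathbb{Z}^3$) and induced $4$-cycles (via $F_2\times F_2$), but a cycle of length at least $5$ contains neither, and your proposed ``global analysis of the induced splitting over the $\mathbb{Z}^2$ edge groups'' is a placeholder, not an argument; $A(C_n)$ for $n\ge 5$ is one-ended and can act on the JSJ tree with $\mathbb{Z}^2$ edge stabilizers in a priori complicated ways, which is exactly why this is hard to do by hand. The standard resolution --- the one behind Lemma \ref{lem:droms} of the paper --- avoids the tree entirely: by Scott's compact core theorem every finitely generated subgroup of $G(K)$ is itself the fundamental group of a compact $3$-manifold, so Droms' characterization applies directly to $A(\Gamma)$ and kills all cycles at once. (ii) Case (3), ``only if'': after knowing $\Gamma$ is a forest you must show each component with an edge is a star, i.e.\ exclude an induced $P_4$. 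You correctly flag that a one-ended component need not be conjugate into a single piece, but you never resolve this; the missing lemma is precisely that $A(P_4)$ embeds in $G(K)$ if and only if $E(K)$ has a Seifert--Seifert gluing, and nothing in your sketch substitutes for it. (iii) Case (4), ``if'': realizing an arbitrary tree by ad hoc ``ping-pong and HNN constructions'' is both unproved and the wrong economy; the known argument embeds the single group $A(P_4)$ into $\pi_1(M_1\cup_T M_2)$ (this construction is recalled in Section \ref{sec:SS} of the paper) and then invokes $A(F)\le A(P_4)$ for every forest $F$ from \cite{KK}. Until (i)--(iii) are supplied, your proposal establishes only cases (1) and (2).
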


For the unknot, the knot group is an infinite cyclic group.
Hence only $A(P_1)=\mathbb{Z}$ can embed there.

In this paper, we focus on TRAAGs embedded in a knot group, and give a complete characterization of such TRAAGs.
To state our main theorem, we add some terminologies.

For an integer $n\ge 1$, a \textit{sink star\/} $S_n$ is a star digraph with $n+1$ vertices and $n$ directed edges which share the same head.
See Figure \ref{fig:star}.

\begin{figure}[htpb]
\includegraphics*[bb=0 0 132 103, scale=0.6]{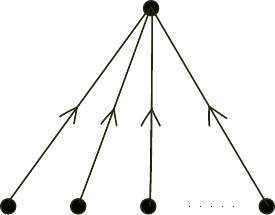}
\caption{A sink star $S_n$ with $n+1$ vertices and $n$ directed edges.}\label{fig:star}
\end{figure}

If $S_n$ has the central vertex $a$ (of degree $n$) and end vertices $b_1,b_2,\dots, b_n$, then
the associated TRAAG $A(S_n)$ has a presentation
\[
\langle a,b_1, b_2, \dots, b_n \mid b_iab_i=a \ (i=1,2,\dots, n) \rangle.
\]

A (non-trivial) torus knot of type $(2p,q)$ is said to be \textit{of even type}, and
its exterior is referred to as a \textit{torus knot exterior of even type}.
Similarly, the exterior of a torus knot of type $(2p,q)$, which lies on the boundary of a smaller solid torus $S^1\times D_0$ in a solid torus $S^1\times D^2$ with $D_0\subset D$
and runs $2p$ times along $S^1$, is called a \textit{cable space of even type}.
(In this case, $q=\pm 1$ is possible.)
A \textit{Seifert fibered piece of even type\/} means either of them.

\begin{theorem}\label{thm:main}
Let $\Gamma$ be a mixed graph with at least one directed edge, and let  $A(\Gamma)$ be the TRAAG based on $\Gamma$.
Let $K$ be a non-trivial knot in $S^3$ with exterior $E(K)$ and knot group $G(K)$.
\begin{itemize}
\item[(1)]
If $E(K)$ consists of only hyperbolic pieces, then $G(K)$ cannot contain $A(\Gamma)$ as a subgroup.
\item[(2)]
If $E(K)$ is Seifert fibered, that is, $K$ is a torus knot, then 
$A(\Gamma)$ embeds into $G(K)$ if and only if $K$ is a (non-trivial) torus knot of even type and
 $\Gamma$ is a single sink star $S_n$ with $n\ge 1$.
\item[(3)]
If $E(K)$ contains both of a Seifert fibered piece and a hyperbolic piece, and there is no Seifert-Seifert gluing, then $A(\Gamma)$ embeds into $G(K)$ if and only if
there is at least one Seifert fibered piece of even type, and $\Gamma=mP_1+\sum T_{m_i}+\sum S_{n_j}$
for $m\ge 0$, $m_i,n_j\ge 1$ and $\Gamma$ has at least one sink star.
\item[(4)]
If $E(K)$ contains a Seifert-Seifert gluing, then  
$A(\Gamma)$ embeds into $G(K)$ if and only if
there is at least one Seifert fibered piece of even type, and $\Gamma=\sum S_{n_i}+F$ for $n_i\ge 1$, where $F$ is a forest and
$\Gamma$ has at least one sink star. (Possibly, $F$ is empty.)
\end{itemize}
\end{theorem}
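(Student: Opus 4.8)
The plan is to follow the architecture of Katayama's proof of Theorem \ref{thm:katayama}, isolating the single new local phenomenon---the appearance of a Klein bottle subgroup from a directed edge---and grafting it onto his RAAG analysis. The first reduction is purely combinatorial: writing $\Gamma$ as the disjoint union of its connected components gives $A(\Gamma)=\ast_c A(\Gamma_c)$, so an embedding of $A(\Gamma)$ amounts to realizing a free product of the $A(\Gamma_c)$ inside $G(K)$. Via the torus decomposition, $G(K)$ is the fundamental group of a graph of groups whose edge groups are the peripheral $\mathbb{Z}^2$'s of the JSJ tori and whose vertex groups are the $\pi_1$ of the hyperbolic or Seifert fibered pieces, and I will study subgroups through the induced action on the Bass--Serre tree $\mathcal{T}$. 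The problem then splits into a \emph{local} question (which connected TRAAGs embed into a single vertex group) and a \emph{global assembly} question (when the free product can be realized), the latter handled, as in \cite{K1}, by ping-pong on $\mathcal{T}$.

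The crucial local input concerns a single directed edge. The relation $xyx=y$ is equivalent to $y^{-1}xy=x^{-1}$, so every directed edge produces a Klein bottle subgroup $\langle x,y\mid y^{-1}xy=x^{-1}\rangle$, inside which $\langle x,y^{2}\rangle\cong\mathbb{Z}^{2}$ has index two. For a sink star I will record the structural facts that $Z(A(S_n))=\langle a^{2}\rangle\cong\mathbb{Z}$, where $a$ is the head vertex, and that $A(S_n)/\langle a^{2}\rangle\cong F_n\rtimes\mathbb{Z}/2$ with $\mathbb{Z}/2$ acting by inversion; in particular this quotient is virtually free. I then claim a Klein bottle subgroup of $G(K)$ is forced, up to conjugacy, into a Seifert fibered vertex group of even type: its finite-index $\mathbb{Z}^{2}$ is peripheral or lies in a vertex group, the inverting element rules out the abelian (torus) and hyperbolic cases, and inside a Seifert piece one has a central extension $1\to\langle h\rangle\to\pi_1\to Q\to 1$ with $Q$ virtually free, in which an element inverting an infinite-order class must have order two---and $Q$ contains an involution exactly when the piece is of even type. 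Concretely, for an even torus knot $G=\langle x,y\mid x^{p}=y^{q}\rangle$ with $p=2p'$, the element $a=x^{p'}$ satisfies $a^{2}=h$, and choosing square roots $C_i$ of $h$ whose images in $Q=\mathbb{Z}/p\ast\mathbb{Z}/q$ are involutions positioned so that the products $\bar C_i\bar a$ freely generate a rank-$n$ free group, the elements $b_i=C_i a^{-1}$ realize $A(S_n)\hookrightarrow G$. This yields simultaneously the sufficiency in (2) and the building block for (3)--(4).

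For necessity I argue in two layers. Part (1) is immediate from the local analysis: with only hyperbolic pieces there is no even Seifert piece, and a peripheral $\mathbb{Z}^{2}$ in a cusped hyperbolic piece is maximal abelian and self-normalizing, so no element can invert one of its directions; hence no Klein bottle subgroup exists and $A(\Gamma)$ with a directed edge cannot embed. In the Seifert cases, a connected twisted component $\Gamma_c$ must be a sink star. Being freely indecomposable (as $\Gamma_c$ is connected with an edge) and containing a $\mathbb{Z}^{2}$ incompatible with an abelian edge group, $A(\Gamma_c)$ is conjugate into a Seifert vertex group $P$; its intersection with the fiber $\langle h\rangle$ cannot be trivial (else $A(\Gamma_c)$ would embed in the virtually free $Q$, contradicting $\mathbb{Z}^{2}\le A(\Gamma_c)$), so $A(\Gamma_c)$ has infinite cyclic center and $A(\Gamma_c)/Z$ is virtually free. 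I then show that a connected mixed graph with a directed edge whose TRAAG has infinite cyclic center and virtually free central quotient is exactly a sink star: the virtually free quotient forbids any $\mathbb{Z}^{2}$ transverse to the center (ruling out extra undirected edges and triangles), while $\langle a^{2}\rangle$ together with the inversion relations forces the common-head star shape. The global shape is then pinned down by a free-product constraint: any free factor containing a $\mathbb{Z}^{2}$ must meet $\langle h\rangle$, but a nontrivial free product has trivial center, so such a factor cannot be a proper free factor of a subgroup of a single vertex group; this forbids extra summands alongside a sink star in the one-vertex case (2), and the untwisted summands are controlled by applying Theorem \ref{thm:katayama} to $\bar\Gamma$ with the directed edges deleted.

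The sufficiency in (3) and (4) is completed by ping-pong on $\mathcal{T}$, exactly as in \cite{K1}: isolated vertices give free $\mathbb{Z}$'s, a star $T_{m_i}=F_{m_i}\times\mathbb{Z}$ embeds via the fiber of a Seifert piece, forests (in the Seifert--Seifert case) embed as in Theorem \ref{thm:katayama}(4), and each sink star embeds into a conjugate of an even Seifert vertex group; taking the conjugating elements in distinct tree-translates makes the subgroup they generate the required free product, so a single even piece already supports arbitrarily many sink-star factors. I expect the main obstacle to be the necessity step that a connected twisted component is \emph{exactly} a sink star: this requires a careful centralizer/normalizer analysis of an inverted infinite-cyclic subgroup inside an even Seifert piece---showing that no undirected edge may be attached, that no directed configuration other than a common head can occur, and that the central quotient is genuinely virtually free---together with the bookkeeping that the assembly produces precisely the families $mP_1+\sum T_{m_i}+\sum S_{n_j}$ and $\sum S_{n_i}+F$ and nothing more. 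The delicate point throughout is that each Seifert piece carries a single fiber direction, so all inversions occurring in one component must share one central square root, and it is exactly this that collapses the twisted part to a single sink star.
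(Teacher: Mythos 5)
Your sufficiency constructions are essentially the paper's own (your elements $b_i=C_ia^{-1}$ with $C_i^2=h$ are exactly of the form used in Proposition \ref{prop:torus1}, where $z_ix^p$ is a square root of the fiber, and the free-product assembly by ping-pong matches the reduced-word arguments of Sections 6 and 7). The necessity direction, however, contains two genuine gaps, and they sit precisely where the paper has to work hardest.

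First, you assert that a connected twisted component $A(\Gamma_c)$, being freely indecomposable and containing $\mathbb{Z}^2$, must be conjugate into a single Seifert vertex group. This is false as a general principle: $A(P_4)$ is freely indecomposable, contains $\mathbb{Z}^2$, and embeds in knot groups of case (4) \emph{without} being conjugate into any vertex group --- in Katayama's embedding it is an amalgam spread over two Seifert pieces, with the $\mathbb{Z}^2=\langle h_1',h_2'\rangle$ lying inside the edge group $\pi_1(T)$. A freely indecomposable subgroup of a graph of groups with $\mathbb{Z}^2$ edge groups may simply inherit a splitting over $\mathbb{Z}$ or $\mathbb{Z}^2$, so nothing you have said prevents a twisted component (say a long tree with one directed edge) from spreading over several pieces. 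The paper rules this out edge-by-edge, not by free indecomposability: Lemmas \ref{lem:gtorsion} and \ref{lem:c} (homology plus a stable-commutator-length argument) show both ends of a directed edge avoid every peripheral torus, Lemma \ref{lem:anott} upgrades this to non-conjugacy into the splitting torus, and the amalgam normal-form Lemmas \ref{lem:g1} and \ref{lem:g2} then force any graph-neighbor of $a$ or $b$ into the same Seifert piece.

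Second, even granting conjugation into one Seifert piece, your closing classification --- ``connected mixed graph with a directed edge, infinite cyclic center, virtually free central quotient $\Rightarrow$ sink star'' --- is false. Take $\Gamma_c$ with one directed edge $[a,b\rangle$ and one undirected edge $[b,c]$ attached at the head $b$. Then
\[
A(\Gamma_c)\;=\;K*_{\langle b\rangle}\bigl(\langle b\rangle\times\langle c\rangle\bigr),
\]
where $K=\langle a,b\mid b^{-1}ab=a^{-1}\rangle$ is the Klein bottle group. Since $b^2$ is central in $K$ and commutes with $c$, one checks $Z(A(\Gamma_c))=\langle b^2\rangle\cong\mathbb{Z}$, and the central quotient is $D_\infty *_{\mathbb{Z}/2}(\mathbb{Z}/2\times\mathbb{Z})$, a finite graph of finite groups, hence virtually free. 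Yet $\Gamma_c$ is not a sink star, and by Proposition \ref{prop:tail}(2) this group embeds in no knot group. Your ``$\mathbb{Z}^2$ transverse to the center'' test does not detect it, because the relevant $\mathbb{Z}^2=\langle b,c\rangle$ meets the center. So the invariants you propose (center plus virtual freeness of the quotient) are too coarse to exclude exactly the configurations --- extra edges at the head or tail of a directed edge --- whose exclusion is the real content of the paper's Sections 4 and 5.
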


We remark that there is no upper bound for the number of stars or sink stars in cases (3) and (4) above.
Since we assume that $\Gamma$ has at least one directed edge,
no TRAAG can embed into the knot group of the unknot (see Section \ref{sec:pre}).

%%%%%
Throughout the paper, we use the following notation.
In a group $G$, the commutator between $a$ and $b$ is $[a,b]=a^{-1}b^{-1}ab$ (although the symbol is the same as an undirected edge,
it may not cause a confusion), and
the conjugate of $a$ with $b$ is $a^b=b^{-1}ab$.
For a subgroup $H\le G$ and $g\in G$,  $H^g$ denotes the  conjugate subgroup $g^{-1}Hg$.

If an element $g\in G$ satisfies the relation $g^c=g^{-1}$ for some $c$, and $g\ne 1$,
then $g$ is called a \textit{generalized torsion element of order two\/} \cite{HMT2} (or a reversible element \cite{DD}).
If both of $g$ and $c$ belong to a subgroup $H$ of $G$, then
the pair $(g,c)$ is called a \textit{generalized torsion pair in $H$} \cite{HMT2}.

%%%%%%%%%%%%%%%%%%%%%%%%%%
\section{Preliminaries}\label{sec:pre}

We put the following assumption on mixed graphs throughout the paper.

\medskip\noindent
\textbf{Assumption.} Any mixed graph $\Gamma=(V,E,D)$ has at least one directed edge.
That is, $D\ne \varnothing$.
\medskip

\begin{lemma}[\cite{ABP}]\label{lem:induce}
Let $U\subset V$, and let $\Gamma_U$ be
the induced subgraph spanned by $U$ in $\Gamma$.
Then $A(\Gamma_U)$ is isomorphic to the subgroup $\langle U \rangle$ generated by $U$ in $A(\Gamma)$.
\end{lemma}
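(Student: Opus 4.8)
The plan is to realize $A(\Gamma_U)$ as a retract of $A(\Gamma)$, not through the naive quotient that kills the vertices outside $U$, but through an auxiliary semidirect product.

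First I would write down the tautological homomorphism $\phi\colon A(\Gamma_U)\to A(\Gamma)$ sending each generator to the generator of the same name. This is well defined because $\Gamma_U$ is the \emph{induced} subgraph: the edges spanned by $U$, together with their directions, are literally the same in $\Gamma_U$ and in $\Gamma$, so every defining relator of $A(\Gamma_U)$ is already a defining relator of $A(\Gamma)$. By construction the image of $\phi$ is $\langle U\rangle$, so the whole content of the lemma is the injectivity of $\phi$.

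For injectivity I would produce a homomorphism $f\colon A(\Gamma)\to H$ into a group $H$ containing $A(\Gamma_U)$ with $f\circ\phi=\mathrm{id}_{A(\Gamma_U)}$. The naive choice $H=A(\Gamma_U)$ with every outside vertex sent to $1$ fails: a directed edge $[x,y\rangle$ with $x\in U$ and $y\notin U$ collapses the Klein relation $xyx=y$ to $x^{2}=1$. The remedy is to let the outside vertices act by sign changes. For $S\subseteq U$ let $\iota_S\in\mathrm{Aut}(A(\Gamma_U))$ be the \emph{partial inversion} that inverts the generators in $S$ and fixes the rest; a short relator-by-relator check (both the commuting relation and the Klein relation $x^{y}=x^{-1}$ are preserved after inverting any subset of the letters involved) shows each $\iota_S$ is genuinely an automorphism, and $S\mapsto\iota_S$ realizes an action of the elementary abelian group $Q=(\mathbb{Z}/2)^{U}$. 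I would then take $H=A(\Gamma_U)\rtimes Q$.

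Finally I would set $f(x)=x$ for $x\in U$ and, for $y\notin U$, $f(y)=\iota_{S_y}$ where $S_y=\{x\in U : [x,y\rangle\in D\}$ is the set of $U$-vertices sending a directed edge into $y$. Verifying that $f$ respects each defining relator of $A(\Gamma)$ reduces to the three boundary cases of an edge between $x\in U$ and $y\notin U$ (undirected, directed into $y$, directed into $x$), together with the relators among outside vertices; the latter hold automatically since $Q$ is an elementary abelian $2$-group. The main obstacle is the consistency of the boundary conditions: a directed edge into $y$ demands $x\in S_y$, whereas an undirected edge or a directed edge out of $y$ demands $x\notin S_y$. These would clash only if a single pair $\{x,y\}$ carried two edges of different types — which is impossible because $\Gamma$ is a simple graph. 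Hence $S_y$ is well defined, $f$ is a homomorphism, and since $f\circ\phi$ is the inclusion of $A(\Gamma_U)$ into $H$, the map $\phi$ is injective, giving $A(\Gamma_U)\cong\langle U\rangle$.
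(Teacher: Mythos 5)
Your proof is correct. Note that the paper gives no argument of its own for this lemma: it is simply quoted from \cite{ABP}, so there is no in-paper proof to compare against, and your proposal supplies a self-contained, elementary substitute. Your route --- realizing $A(\Gamma_U)$ as a retract of $A(\Gamma)$ inside $H=A(\Gamma_U)\rtimes Q$ with $Q=(\mathbb{Z}/2)^{U}$ acting by partial inversions --- repairs exactly the point where the classical RAAG retraction (kill every vertex outside $U$) breaks in the twisted setting: a directed edge $[x,y\rangle$ with tail $x\in U$ and head $y\notin U$ would collapse $xyx=y$ to $x^{2}=1$, which is fatal since TRAAG generators have infinite order. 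All of your verifications do go through: each $\iota_S$ preserves commuting relators (inverses of commuting elements commute) and Klein relators (from $x^{y}=x^{-1}$ one gets $(x^{\varepsilon})^{y^{\delta}}=x^{-\varepsilon}$ for all signs $\varepsilon,\delta$), and $\iota_S\iota_T=\iota_{S\triangle T}$ makes $S\mapsto\iota_S$ a homomorphism from $Q$; relators among outside vertices, whether commuting or Klein, hold automatically in the abelian exponent-two group $Q$; and the three boundary cases impose the conditions $x\in S_y$ (for $[x,y\rangle\in D$) and $x\notin S_y$ (for $[x,y]\in E-D$ or $[y,x\rangle\in D$), which never conflict because the underlying graph is simple, and which are satisfied by your definition $S_y=\{x\in U : [x,y\rangle\in D\}$. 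Since $f\circ\phi$ is the inclusion of $A(\Gamma_U)$ as the first factor of the semidirect product, $\phi$ is injective, and $A(\Gamma_U)\cong\mathrm{im}\,\phi=\langle U\rangle$ as claimed. The one cosmetic caveat is that $f\circ\phi$ is an embedding rather than literally $\mathrm{id}_{A(\Gamma_U)}$, but injectivity of $\phi$ follows all the same.
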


The corresponding claim for RAAGs is also well known (see \cite{K1}).

The next lemma is crucial to our argument, and it will be used repeatedly.

\begin{lemma}[\cite{HMT2}]\label{lem:kb}
Let $\Gamma$ be a mixed graph consisting of a single directed edge $[a,b\rangle$.
If $A(\Gamma)$ embeds into a knot group $G(K)$, 
then $E(K)$ contains a Seifert fibered piece $M$ of even type such that
$A(\Gamma)$ is conjugate into $\pi_1(M)$.
More precisely, $A(\Gamma)^x\le \pi_1(M)$ for some $x\in G(K)$, and
$b^x=e^{p}$, where $e$ is the (unique) exceptional fiber of index $2p$ in $M$.
\end{lemma}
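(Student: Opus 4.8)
The group in question is $A(\Gamma)=\langle a,b\mid aba=b\rangle$, the fundamental group of the Klein bottle; rewriting the relation as $a^{b}=a^{-1}$ exhibits $(a,b)$ as a generalized torsion pair. I record three elementary facts to be used throughout: the center of $A(\Gamma)$ is the infinite cyclic group $\langle b^{2}\rangle$, the subgroup $\langle a,b^{2}\rangle$ is isomorphic to $\mathbb{Z}^{2}$ and has index $2$, and consequently $A(\Gamma)$ is virtually abelian, hence amenable and without any nonabelian free subgroup. The plan is to locate a conjugate of $A(\Gamma)$ inside a single piece of the torus decomposition of $E(K)$, to exclude the hyperbolic pieces, and finally to read off the even-type condition and the position of $b$ from the Seifert structure.

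First I would pass to the graph-of-groups decomposition of $G(K)$ coming from the torus decomposition, whose edge groups are the $\mathbb{Z}^{2}$'s carried by the JSJ tori, and analyze the induced action of $A(\Gamma)$ on the associated Bass--Serre tree $T$. Since $A(\Gamma)$ is virtually $\mathbb{Z}^{2}$ it contains no nonabelian free subgroup, so its action on $T$ cannot be of general type; a finitely generated amenable group acting on a tree must fix a vertex, an end, or a line. The vertex case gives exactly $A(\Gamma)^{x}\le\pi_{1}(M)$ for some piece $M$, which is the conclusion we want. The end and line cases are the delicate point, and I expect them to be the main obstacle: there the normal abelian subgroup $\langle a\rangle$ would lie inside a JSJ torus while $b$ acts hyperbolically on $T$ and inverts it. To dispose of these cases I would apply the obstruction of the next paragraph to the pieces adjacent to that torus, namely that no element of a hyperbolic piece inverts a peripheral class; this forces the inversion $a\mapsto a^{-1}$ to be realized on a Seifert side and lets one conjugate all of $A(\Gamma)$ into that Seifert piece, reducing to the vertex case.

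Next I would rule out the possibility that $M$ is hyperbolic. Every nontrivial element of $\pi_{1}$ of a finite-volume cusped hyperbolic piece is loxodromic or parabolic, and in either case the relation $a^{b}=a^{-1}$ with $a\ne 1$ is impossible. If $a$ is loxodromic with axis $\gamma$, then $b$ must send $\gamma$ to the axis of $a^{-1}$, namely $\gamma$ itself with its two endpoints interchanged; but an orientation-preserving isometry interchanging two points must fix each of them under its square, so a loxodromic or parabolic element cannot do this, while an order-two elliptic is excluded by torsion-freeness. If $a$ is parabolic with fixed point $\xi$, then $b$ also fixes $\xi$, so $a$ and $b$ lie in the rank-two abelian cusp subgroup, forcing $a^{2}=1$, a contradiction. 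Hence $M$ is Seifert fibered, and the same computation, applied to a parabolic peripheral class, supplies the peripheral-inversion obstruction invoked above.

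Finally I would extract the even-type condition from the central extension $1\to\langle h\rangle\to\pi_{1}(M)\to\pi_{1}^{\mathrm{orb}}(O)\to 1$, where $h$ is the regular fiber and $O$ the base orbifold. The central element $b^{2}$ maps into $\langle h\rangle$, so the image $\bar{b}$ of $b$ in $\pi_{1}^{\mathrm{orb}}(O)$ has order dividing $2$; it cannot be trivial, since otherwise $b$ would be central and could not invert $a$, so $\bar{b}$ has order exactly $2$ and inverts $\bar{a}$. For the Seifert pieces of a knot exterior the base is orientable and $\pi_{1}^{\mathrm{orb}}(O)$ is a free product of a free group with the cyclic groups of the exceptional fibers; such a group contains an element of order $2$ if and only if some exceptional fiber has even index. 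This excludes composing spaces, whose base has no cone points so that $\pi_{1}^{\mathrm{orb}}(O)$ is free and torsion-free, and forces $M$ to be of even type. Conjugating $\bar{b}$ to the order-two element $\bar{e}^{\,p}$ of the cyclic group $\mathbb{Z}/2p$ attached to the even exceptional fiber $e$, and lifting with the help of $h=e^{2p}=b^{2}$, yields $b^{x}=e^{p}$, completing the proof.
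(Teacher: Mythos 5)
The paper does not actually prove Lemma \ref{lem:kb}; it is imported wholesale from \cite{HMT2} (Theorems 1.4 and 1.10 there), with a two-line gloss. So the only question is whether your independent argument is sound, and it is not, at the two places you yourself flag as delicate. Your overall architecture (Bass--Serre action on the JSJ tree, exclusion of hyperbolic vertex groups, central extension over the base orbifold) is the natural one, and your hyperbolic-piece computation is correct; the gaps are in the reduction to a vertex group and in the Seifert analysis.

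\textbf{(1) The end/line cases.} Your proposed tool --- no element of a hyperbolic piece inverts a peripheral class --- cannot dispose of these cases, because there the inverting element $b$ is hyperbolic on the tree, hence lies in \emph{no} vertex group; a statement about elements of adjacent pieces says nothing about $b$. Worse, the advertised conclusion (``conjugate all of $A(\Gamma)$ into that Seifert piece, reducing to the vertex case'') is impossible in the very situation you are treating: a subgroup containing a hyperbolic element fixes no vertex, so these cases must be shown to be \emph{contradictory}, not reduced. What actually happens is that some nontrivial element of $A(\Gamma)$ fixes a bi-infinite geodesic pointwise (if $a$ is elliptic and $b$ hyperbolic, then $a$ commutes with $b^{2}$, hence preserves the axis of $b$ and, being elliptic, fixes it pointwise; in the remaining configurations the same happens to $b^{2}$). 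Killing such an element requires acylindricity-type facts about the JSJ tree that never appear in your outline: at a hyperbolic vertex two distinct incident edge stabilizers meet trivially (malnormality of the peripheral system, \cite{HW}), at a Seifert vertex they meet in the fiber subgroup, and the fibers of two Seifert pieces adjacent along a JSJ torus are non-parallel in that torus. Without these inputs the non-vertex cases do not close.

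\textbf{(2) The Seifert step.} The assertion that ``the central element $b^{2}$ maps into $\langle h\rangle$'' is a non sequitur: $b^{2}$ is central in $A(\Gamma)$, not in $\pi_1(M)$, so nothing yet places it in $Z(\pi_1(M))=\langle h\rangle$. The true statement needs an argument: since $\pi_1^{\mathrm{orb}}(O)$ is virtually free it contains no $\mathbb{Z}^{2}$, so $A(\Gamma)\cap\langle h\rangle$ is a \emph{nontrivial} subgroup of $Z(A(\Gamma))=\langle b^{2}\rangle$, giving only $b^{2k}\in\langle h\rangle$ for some $k\ge 1$; one must then exclude $k\ge 2$, e.g.\ because otherwise the image of $\langle a,b^{2}\rangle$ would be $\mathbb{Z}\times\mathbb{Z}/k$, a noncyclic freely indecomposable subgroup of a free product of cyclic groups, contradicting the Kurosh subgroup theorem (\cite{MKS}). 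Finally, your lifting step ``$h=e^{2p}=b^{2}$'' is not available (one only gets that $b^{2}$ is an \emph{odd} power of $h$), and in fact the exact equality $b^{x}=e^{p}$ cannot be proved by any argument, since $b$ is only pinned down modulo the fiber: in $G(K)=\langle x,y\mid x^{2p}=y^{q}\rangle$ take $a=[x^{p},yxy]$ and $b=x^{3p}$; as $x^{2p}$ is central, conjugation by $b$ equals conjugation by $x^{p}$, so $a^{b}=a^{-1}$ and $\langle a,b\rangle\cong A(\Gamma)$ exactly as in Proposition \ref{prop:torus1}, yet $x^{3p}$ is not conjugate to $e^{p}=x^{p}$ because their images in $H_1$ are $3pq\neq pq$. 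The sharpest reachable conclusion is $b^{x}=e^{p}h^{m}$ for some integer $m$; that weaker form is what your route could deliver once the gaps above are filled, and it suffices for every later application of the lemma in the paper.
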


Under the assumption of Lemma \ref{lem:kb},
$A(\Gamma)$ is the fundamental group of the Klein bottle.
Theorem 1.4 of \cite{HMT2} shows that $b^x\in \pi_1(M)$ for some Seifert fibered piece $M$ of $E(K)$,
and Theorem 1.10 of \cite{HMT2} (and its proof) describes $b^x=e^p$.
Thus $b^{2x}=e^{2p}$ gives a regular fiber of $M$, which is central in $\pi_1(M)$.

By Lemmas \ref{lem:induce} and \ref{lem:kb}, we see that the knot group of the unknot does not admit a TRAAG.
Also, any TRAAG itself cannot be a knot group, since the abelianization of a TRAAG (with a directed edge)
contains a $2$-torsion \cite{F}.

 Theorem \ref{thm:main} (1) immediately follows from these lemmas.

\begin{corollary}\label{cor:h}
If the knot exterior $E(K)$ consists of only hyperbolic pieces,
then $G(K)$ does not admit a TRAAG as a subgroup.

In particular, the knot group of a hyperbolic knot cannot contain a TRAAG.
\end{corollary}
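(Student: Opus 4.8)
The plan is to argue by contradiction, reducing the arbitrary TRAAG to the single Klein relation already handled by Lemma~\ref{lem:kb}. Suppose that some TRAAG $A(\Gamma)$ embeds into $G(K)$. By our standing Assumption, $D\ne\varnothing$, so $\Gamma$ contains at least one directed edge; fix such an edge $[a,b\rangle$.

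First I would restrict attention to its two endpoints. Since $\bar{\Gamma}=(V,E)$ is a simple graph and $D\subseteq E$, the induced subgraph on $U=\{a,b\}$ is exactly the single directed edge $[a,b\rangle$, with no additional edge between $a$ and $b$. By Lemma~\ref{lem:induce}, the subgroup $\langle a,b\rangle\le A(\Gamma)$ is isomorphic to $A(\Gamma_U)=\langle a,b\mid aba=b\rangle$, the Klein bottle group. Composing with the assumed inclusion $A(\Gamma)\le G(K)$ then yields an embedding of this Klein bottle group into $G(K)$.

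At this point I would invoke Lemma~\ref{lem:kb} applied to the single directed edge $[a,b\rangle$: whenever $\langle a,b\mid aba=b\rangle$ embeds into $G(K)$, the exterior $E(K)$ must contain a Seifert fibered piece of even type. This contradicts the hypothesis that $E(K)$ consists only of hyperbolic pieces, and the contradiction establishes that no TRAAG can embed into $G(K)$. For the final sentence, I would simply observe that a hyperbolic knot has exterior $E(K)$ equal to a single hyperbolic piece, so it is a special case of the situation just treated.

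The argument is essentially immediate once the two earlier lemmas are in hand, so there is no substantial obstacle; the only point requiring care is the reduction step, namely verifying (via simplicity of $\bar{\Gamma}$) that the induced subgraph on the endpoints of a directed edge is precisely one directed edge, so that Lemma~\ref{lem:kb} may be applied verbatim.
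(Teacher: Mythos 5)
Your proposal is correct and follows exactly the paper's argument: restrict to the induced subgraph on the endpoints of a directed edge via Lemma~\ref{lem:induce}, then apply Lemma~\ref{lem:kb} to force a Seifert fibered piece in $E(K)$, contradicting the all-hyperbolic hypothesis. The paper's proof is just a terser version of the same reduction.
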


\begin{proof}
Suppose that $A(\Gamma) \le G(K)$ for a mixed graph $\Gamma$.
By our assumption, $\Gamma$ has a directed edge, which gives an induced subgraph.
Thus $E(K)$ contains a Seifert fibered piece by Lemmas \ref{lem:induce} and \ref{lem:kb}.
\end{proof}

For a mixed graph $\Gamma$, the \textit{underlying graph\/} $\bar{\Gamma}$ is just the graph
obtained from $\Gamma$ by forgetting the orientation of all directed edges.

First, we want to exclude triangles in a mixed graph when the corresponding TRAAG embeds into a knot group.
There are 7 possibilities of triangles as shown in Figure \ref{fig:triangle}.

\begin{figure}[htpb]
\includegraphics*[bb=0 0 361 196, scale=0.8]{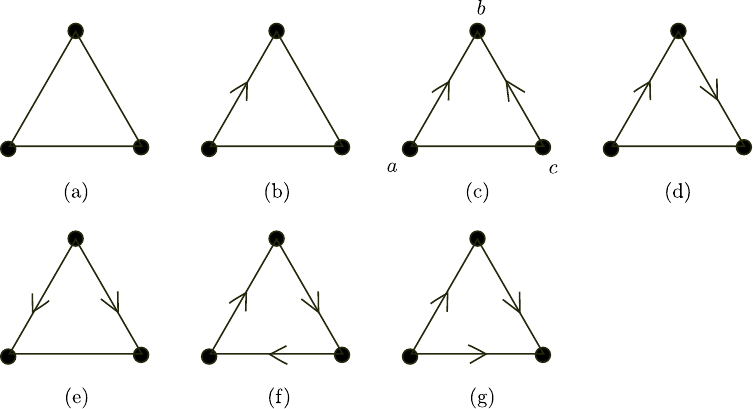}
\caption{Triangles in a mixed graph.}
\label{fig:triangle}
\end{figure}

\begin{lemma}\label{lem:triangle}
Let $\Gamma=(V,E,D)$  be a mixed graph.
Assume that $A(\Gamma)$ embeds into a knot group.
Then the underlying graph $\bar{\Gamma}$ of $\Gamma$ cannot contain a triangle.
\end{lemma}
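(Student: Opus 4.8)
The plan is to reduce the statement to a finite check via induced subgraphs, and then to exhibit a copy of $\mathbb{Z}^3$ inside each triangle, contradicting a standard property of knot groups. First I would invoke Lemma \ref{lem:induce}: if $\bar{\Gamma}$ contains a triangle on a vertex set $U=\{a,b,c\}$, then $A(\Gamma_U)\le A(\Gamma)\le G(K)$, where $\bar{\Gamma_U}=K_3$ and $\Gamma_U$ is one of the seven mixed triangles of Figure \ref{fig:triangle}. Hence it suffices to show that none of these seven TRAAGs embeds into $G(K)$. The obstruction I would use is that a knot group cannot contain $\mathbb{Z}^3$: the exterior $E(K)$ is a compact aspherical $3$-manifold with nonempty boundary, so it has the homotopy type of a $2$-complex and $\operatorname{cd}G(K)\le 2$, whereas $\operatorname{cd}\mathbb{Z}^3=3$; since cohomological dimension does not increase on passing to subgroups, $\mathbb{Z}^3\not\le G(K)$. (Equivalently, one may cite the classical fact that abelian subgroups of knot groups have rank at most $2$.)

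So the heart of the argument is to show that each triangle $A(\Gamma_U)$ contains $\mathbb{Z}^3$, and I would do this uniformly with the elements $a^2,b^2,c^2$. The first step is that they pairwise commute: for an undirected edge this is clear, while for a directed edge $[x,y\rangle$ the Klein relation gives $yxy^{-1}=x^{-1}$, hence $yx^2y^{-1}=x^{-2}$, and a one-line manipulation yields $x^2y^2=y^2x^2$. As every pair of $\{a,b,c\}$ is joined by an edge, $\langle a^2,b^2,c^2\rangle$ is abelian, i.e.\ a quotient of $\mathbb{Z}^3$. The second step, independence, is the key point, and I would settle it by an explicit affine representation $\rho\colon A(\Gamma_U)\to\operatorname{Isom}(\mathbb{R}^3)$. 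Send each vertex $v_i$ to the isometry $x\mapsto L_i x+\tfrac12 e_i$, where $L_i=\operatorname{diag}(\varepsilon_{i1},\varepsilon_{i2},\varepsilon_{i3})$ with $\varepsilon_{ij}=-1$ exactly when $[v_j,v_i\rangle$ is a directed edge (the head negates the tail's coordinate) and $\varepsilon_{ij}=+1$ otherwise. A short case check shows $\rho$ respects every commuting relation and every Klein relation, so $\rho$ is well defined; and since $L_i e_i=e_i$ one computes $\rho(v_i^2)=(x\mapsto x+e_i)$, the translation by $e_i$. As $e_1,e_2,e_3$ are independent translations, $\rho$ carries $\langle a^2,b^2,c^2\rangle$ onto a rank-$3$ free abelian group, forcing $\langle a^2,b^2,c^2\rangle\cong\mathbb{Z}^3$.

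The step I expect to be the main obstacle is precisely this independence, because for the two fully directed triangles --- above all the cyclically oriented one, where $a^b=a^{-1}$, $b^c=b^{-1}$, $c^a=c^{-1}$ --- the abelianization is $(\mathbb{Z}/2)^3$, so $a^2,b^2,c^2$ all vanish in $H_1$ and no homomorphism to $\mathbb{Z}$ can detect them. The affine representation $\rho$ is designed exactly to see this hidden $\mathbb{Z}^3$: its point group is the diagonal $(\mathbb{Z}/2)^3$ and its translation lattice is freely generated by the $\rho(v_i^2)$. Once $\mathbb{Z}^3\le A(\Gamma_U)$ is in hand, combining it with $A(\Gamma_U)\le G(K)$ and $\operatorname{cd}G(K)\le 2$ yields the contradiction, so $\bar{\Gamma}$ contains no triangle.
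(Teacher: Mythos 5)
Your proof is correct, and it takes a genuinely different route from the paper's. Both arguments start identically, reducing via Lemma \ref{lem:induce} to the seven mixed triangles of Figure \ref{fig:triangle}, but the paper then proceeds case by case with quite heterogeneous tools: types (a), (b) give $\mathbb{Z}^3$ immediately; types (c), (e) need small algebraic computations showing groups such as $\langle a,c,b^2\rangle$ are $\mathbb{Z}^3$; type (f) is dismissed because its TRAAG has torsion, citing \cite{ABP}; and the two hardest types (d), (g) are handled with genuine $3$-manifold theory --- centralizers in knot groups, the JSJ decomposition, the structure of Seifert fibered pieces, and the classification of generalized torsion elements of order two from \cite{HMT2}. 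Your affine representation replaces all of this by one uniform computation: the squares of the three vertex generators always commute pairwise, and $\rho$ sends them to the three coordinate translations, so $\langle a^2,b^2,c^2\rangle$ is an abelian group on three generators surjecting onto $\mathbb{Z}^3$, hence isomorphic to $\mathbb{Z}^3$. I verified that $\rho$ respects the relations in every orientation pattern; the key identities are $L_ie_i=e_i$, $L_ie_j=-e_j$ for a directed edge $[v_j,v_i\rangle$, and $L_je_i=+e_i$, the last holding precisely because the graph is simple, so the reverse edge is absent. The final obstruction $\mathrm{cd}\,G(K)\le 2$ is the same fact the paper cites as \cite[Theorem 5.4.2]{N}. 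What your route buys: it is elementary, uniform over cases, avoids \cite{ABP} and \cite{HMT2} entirely, and proves the stronger purely algebraic statement that every triangle TRAAG contains $\mathbb{Z}^3$, hence embeds in no group of cohomological dimension at most two, not just in no knot group. What the paper's route buys is mainly expository: the centralizer/Seifert-fiber analysis in cases (d) and (g) rehearses techniques reused in its later sections. One cosmetic slip in your motivation (not in the proof itself): of the two fully directed triangles, only the cyclic one has abelianization $(\mathbb{Z}/2)^3$; the other, type (g), has $H_1\cong(\mathbb{Z}/2)^2\oplus\mathbb{Z}$ --- though there too $a^2$ and $b^2$ die in $H_1$, so your point that abelianization cannot certify independence stands.
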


\begin{proof}
Assume that the underlying graph $\bar{\Gamma}$ contains a triangle, and
let $\Delta$ be the corresponding subgraph of $\Gamma$.
Let $\{a,b,c\}$ be the vertices.
Note that $\Delta$ is the induced subgraph of $\Gamma$.
Hence $A(\Delta)\le A(\Gamma)$ by Lemma \ref{lem:induce}.
We eliminate all possible types as shown in Figure \ref{fig:triangle}.

(a) corresponds to $\mathbb{Z}^3$, which is impossible by \cite[Theorem 5.4.2]{N}.
(b) corresponds to $N\times \mathbb{Z}$, where $N$ is the fundamental group of the Klein bottle.
Since $\mathbb{Z}^2\le N$, we have $\mathbb{Z}^3$ in a knot group, impossible again.

%%%%%%%%%

Consider a triangle $\Delta$ of type (c).
Let $[a,b\rangle$, $[c,b\rangle$ and $[c,a]$ be the edges.
Then $\langle a,c\rangle =\mathbb{Z}^2$, and $b^2$ commutes with $a$ and $c$.
We claim $\langle b^2 \rangle \cap \langle a,c\rangle=\{1\}$.
Suppose not.
Then $b^{2m}=a^i c^j$ for some integers $m\ne 0, i,j $.
Recall that we have relations $aba=b$ and $cbc=b$.
This means that $a^b=a^{-1}$ and $c^b=c^{-1}$.
By taking a conjugate of $b^{2m}=a^i c^j$ with $b$, we have
$b^{2m}=a^{-i}c^{-j}$.
Thus $a^{2i}c^{2j}=1$, which implies $i=j=0$.
So, $b^{2m}=1$, which contradicts the fact that any knot group is torsion-free.
We have thus shown that $\langle a,c,b^2\rangle =\mathbb{Z}^3$.
This is impossible as before.

For (d), let $[a,b\rangle$, $[b,c\rangle$ and $[c,a]$ be the edges.
Then $A(\Delta)=\langle a, b,c \mid aba=b, bcb=c, ca=ac\rangle$.
Consider the centralizer $C(a)$ of $a$ in the knot group $G(K)$.
It contains $\langle a, c\rangle =\mathbb{Z}^2$.
Since $ab^2=b^2a$, $b^2$ also belongs to $C(a)$.

We claim that $b^2\not \in \langle a, c\rangle$.
Assume $b^2 \in  \langle a, c\rangle$.
Then $b^2=a^ic^j$ for some integers $i, j$.
By conjugating with $c$, $(b^2)^c=a^i c^j$.
On the other hand, $b^c=b^{-1}$ implies $(b^2)^c=b^{-2}$.
Hence $b^{-2}=a^i c^j$, so $b^4=1$, a contradiction.

Thus we have shown that $C(a)$ is bigger than $\mathbb{Z}^2$.
This implies that there exists a Seifert fibered piece $M$ of the knot exterior $E(K)$ (with respect to the torus decomposition)
and $x\in G(K)$ such that 
$a^x\in \pi_1(M)\le G(K)$ and 
$x^{-1}C(a)x$ is equal to the centralizer of $a^x$ in $\pi_1(M)$ (\cite{AFW,JS}).

For simplicity of notation, we keep using the same symbols after taking conjugations with $x$.
Thus, $C(a)\le \pi_1(M)$, and $C(a)$ is bigger than $\mathbb{Z}^2$.
Hence $a$ is a power of a regular fiber of $M$ (see the paragraph after Theorem 2.5.2 of \cite{AFW}), and
$C(a)=\pi_1(M)$.
There are only three possibilities of $M$; a torus knot exterior, a cable space or a composing space.

Note that $\pi_1(M)$ contains the Klein bottle group $\langle b^2,c \mid b^2cb^2=c\rangle$.
($A(\Delta)$ has a subgroup $\langle b,c \mid bcb=c\rangle$, which contains
a subgroup $\langle b^2, c \mid b^2cb^2=c\rangle$.
By taking a  conjugate with $x$, we find the above subgroup of $\pi_1(M)$.)
This means that $\pi_1(M)$ contains a generalized torsion element of order two.
Hence $M$ is not a composing space \cite{HMT2}, and has an (unique) exceptional fiber $e$ of even index $2p$.

However, \cite[Theorem 1.10]{HMT2} claims that $c$ is conjugate to a power of the exceptional fiber $e$ in $M$.
In fact, $c^y=e^p$ for some $y\in \pi_1(M)$.
Recall that $a$ is a power of a regular fiber $h$.  So, $a=h^i$ for some integer $i\ne 0$, and $a$ is central in $\pi_1(M)$.
Since $e$ has index $2p$, we have $h=e^{2p}$.
Thus
\[
c^{2i}=(e^{2pi})^{y^{-1}}=(h^i)^{y^{-1}} = (h^{y^{-1}})^i=h^i=a.
\]
This contradicts that $\langle a,c\rangle=\mathbb{Z}^2$ in $\pi_1(M)$.

%%%%%
Consider (e).
Similarly, let $[b,a\rangle$, $[b,c\rangle$ and $[c,a]$ be the edges.
Since $\langle a, c\rangle=\mathbb{Z}^2$, 
it has a subgroup $\langle a^2,c^2 \rangle =\mathbb{Z}^2$.

We claim $\langle b\rangle \cap \langle a^2,c^2 \rangle =\{1\}$.
Suppose not.  Then $b^{m}=a^{2i} c^{2j}$ for some integers $m\ne 0$, $i$ and $j$.
Recall $b^a=b^{-1}$.
By taking a conjugate with $a$, we have $b^{-m}=a^{2i}c^{2j}$, so $b^{2m}=1$.
This is impossible.
Since $b$ commutes with $a^2$ and $c^2$, $\langle a^2,c^2,b\rangle =\mathbb{Z}^3$, a contradiction.

%%%
(f) leads to a torsion by \cite{ABP}, impossible.

Finally, consider (g).
There are three edges $[a,b\rangle$, $[b,c\rangle$ and $[a,c\rangle$.
Then $\langle a,c \rangle$ is the Klein bottle group, which has an index two subgroup $\langle a,c^2\rangle =\mathbb{Z}^2$.
Consider the centralizer $C(c^2)$.  It contains $\langle a,c^2\rangle =\mathbb{Z}^2$ and $b$.

We claim $b\not \in \langle a, c^2 \rangle$.
Suppose not.
Then $b=a^i c^{2j}$ for some integers $i$ and $j$.
Taking a conjugate with $c$ gives $b^{-1}=a^{-i} c^{2j}$.
Hence $c^{4j}=1$, which gives $j=0$.
Thus $b=a^i$.
However, this contradicts that $\langle a, b\rangle$ is the Klein bottle group.

Thus $C(c^2)$ is bigger than $\mathbb{Z}^2$.
As in (d),  there exists a Seifert fibered piece $M$ of $E(K)$, and we can assume that $C(c^2)\le \pi_1(M)$, after conjugations.
Then, $c^2$ is a power of a regular fiber, and $C(c^2)=\pi_1(M)$.

On the other hand, $C(c^2)$ contains the Klein bottle group $\langle a, b \rangle$.
Hence $\pi_1(M)$ has a generalized torsion element  of order two.
As in (d), $b$ is conjugate to a power of an exceptional fiber in $M$.
This leads to a contradiction that $\langle b, c^2\rangle=\mathbb{Z}^2$ in $\pi_1(M)$ as in (d).
\end{proof}

%%%%%%

\begin{lemma}\label{lem:pride}
For a mixed graph $\Gamma$, assume that $A(\Gamma)$ embeds into a knot group.
Then 
the RAAG  $A(\bar{\Gamma})$ based on the underlying graph $\bar{\Gamma}$  is a subgroup of $A(\Gamma)$.
\end{lemma}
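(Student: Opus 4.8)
The plan is to realize $A(\bar\Gamma)$ explicitly as the subgroup of $A(\Gamma)$ generated by the squares of the generators. First I would record that for each directed edge $[x,y\rangle$ the Klein relation $xyx=y$ is equivalent to $y^{-1}xy=x^{-1}$, whence $y^{-2}xy^{2}=x$; thus $y^{2}$ commutes with $x$, and in particular $x^{2}$ and $y^{2}$ commute in $A(\Gamma)$. Together with the obvious fact that $x^{2}$ and $y^{2}$ commute whenever $[x,y]\in E-D$, this shows that $v\mapsto v^{2}$ sends every defining relator of the RAAG $A(\bar\Gamma)$ to the identity, so it extends to a homomorphism $\phi\colon A(\bar\Gamma)\to A(\Gamma)$ with image $\langle v^{2}\mid v\in V\rangle$. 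The content of the lemma is then precisely that $\phi$ is injective.

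To get injectivity I would introduce an auxiliary target that ``peels off'' the twisting. Let $Q=(\mathbb{Z}/2)^{V}$, with commuting involutions $t_{v}$, act on $A(\bar\Gamma)$ by letting $t_{v}$ invert each generator $u$ with $[u,v\rangle\in D$ (that is, each tail $u$ whose head is $v$) and fix all remaining generators; since inverting a prescribed set of generators is always an automorphism of a RAAG, this is a well-defined action and yields a semidirect product $A(\bar\Gamma)\rtimes Q$. I would then define $\Theta\colon A(\Gamma)\to A(\bar\Gamma)\rtimes Q$ on generators by $\Theta(v)=(v,t_{v})$ and check the defining relations of $A(\Gamma)$. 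The undirected relations are immediate, and for a directed edge $[x,y\rangle$ a short computation in the semidirect product gives $\Theta(x)\Theta(y)\Theta(x)=(xyx^{-1},t_{y})=(y,t_{y})=\Theta(y)$, where $xyx^{-1}=y$ holds because $x$ and $y$ are adjacent, hence commute, in $\bar\Gamma$. So $\Theta$ is a genuine homomorphism.

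The key observation is that $\Theta(v^{2})=(v,t_{v})^{2}=(v^{2},1)$, so the composite $\Theta\circ\phi$ is exactly the squaring endomorphism $v\mapsto v^{2}$ of $A(\bar\Gamma)$, landing in the $A(\bar\Gamma)$-factor. It therefore suffices to know that the squaring endomorphism of a RAAG is injective, and here I would invoke graph-product theory: $A(\bar\Gamma)$ is the graph product over $\bar\Gamma$ of the cyclic groups $\langle v\rangle\cong\mathbb{Z}$, and the subgroup generated by the subgroups $\langle v^{2}\rangle\cong\mathbb{Z}$ is naturally isomorphic to the graph product over $\bar\Gamma$ of those subgroups, i.e.\ to $A(\bar\Gamma)$ again (this is a standard consequence of the normal form theory for graph products). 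Hence $\Theta\circ\phi$ is injective, so $\phi$ is injective and $A(\bar\Gamma)=\langle v^{2}\mid v\in V\rangle\le A(\Gamma)$.

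I expect the main obstacle to be the injectivity step rather than the construction: producing $\phi$ is a routine relation-check, but ruling out a kernel genuinely needs the detour through $A(\bar\Gamma)\rtimes Q$, whose sole purpose is to untwist the Klein relations and reduce the problem to the standard (but non-formal) fact that raising the generators of a RAAG to fixed powers gives an embedding. I would also remark that this argument never uses the knot-group hypothesis: it proves $A(\bar\Gamma)\le A(\Gamma)$ for every mixed graph $\Gamma$, the hypothesis serving only to place us in the setting where the conclusion is applied, e.g.\ to feed the RAAG $A(\bar\Gamma)$ into Theorem \ref{thm:katayama}.
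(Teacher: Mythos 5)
Your proof is correct, and it takes a genuinely different route from the paper's. The paper quotes Pride's small-cancellation theorem: since Lemma \ref{lem:triangle} guarantees that $\bar{\Gamma}$ is triangle-free, the defining presentation of $A(\Gamma)$ satisfies the conditions $C(4)$ and $T(4)$ of \cite{P}, and Theorem 2(iv) of \cite{P} then asserts that the squares of the generators generate a subgroup whose only defining relations are the commutations, i.e.\ a copy of $A(\bar{\Gamma})$. So in the paper the knot-group hypothesis is genuinely used (through Lemma \ref{lem:triangle}, a long $3$-manifold-theoretic case analysis), although both proofs end up exhibiting the same subgroup, namely the one generated by the squares of the generators. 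Your argument replaces the small-cancellation input by the untwisting homomorphism $\Theta\colon A(\Gamma)\to A(\bar{\Gamma})\rtimes(\mathbb{Z}/2)^V$; I checked the relation computations and they are right, the key points being that simplicity of the mixed graph (no loops or multiple edges) forces $t_x$ to fix $y$ whenever $x$ and $y$ span an edge, while $t_y$ inverts $x$ exactly when $[x,y\rangle\in D$, and that $xyx^{-1}=y$ holds in $A(\bar{\Gamma})$ because adjacent vertices commute there. The composite $\Theta\circ\phi$ is then the squaring endomorphism of $A(\bar{\Gamma})$, which is injective by the standard graph-product fact you cite (a reduced word in the vertex subgroups $\langle v^2\rangle$ is still reduced in $A(\bar{\Gamma})$, by Green's normal form). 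What each approach buys: the paper's proof is a two-line appeal to a published theorem once triangle-freeness is available, whereas yours is self-contained, avoids small cancellation entirely, and proves the unconditional statement that $A(\bar{\Gamma})\le A(\Gamma)$ for \emph{every} mixed graph. That is strictly stronger, and it would in fact streamline the paper: combined with Lemma \ref{lem:induce} and the fact that $\mathbb{Z}^3$ embeds in no knot group, it makes every case of Lemma \ref{lem:triangle} immediate (a triangle in $\bar{\Gamma}$ spans an induced subgraph $\Delta$ with $\bar{\Delta}=K_3$, so $\mathbb{Z}^3=A(K_3)\le A(\Delta)\le A(\Gamma)$), with no circularity since your proof never invokes that lemma.
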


\begin{proof}
This follows from \cite{P}.
Let $x_1,x_2,\dots, x_s$ be the generators of $A(\Gamma)$.
Then $A(\Gamma)$ satisfies the conditions $C(4)$ and $T(4)$ of \cite{P}, by Lemma \ref{lem:triangle}.
Theorem 2(iv) of \cite{P}  claims that the squares $x_1^2,x_2^2,\dots, x_s^2$ generate
a subgroup of $A(\Gamma)$ whose the only defining relations are $[x_i^2,x_j^2]=1$ if $[x_i,x_j]=1$.
Clearly, this subgroup is isomorphic to $A(\bar{\Gamma})$.
\end{proof}

As remarked before, Droms \cite{D} shows that
a RAAG $A(\Gamma)$ is a $3$-manifold group if and only if each connected component of $\Gamma$ is
a tree or $K_3$.
Since we consider only knot groups, we can say more as in \cite{K1}.

\begin{lemma}[\cite{K1}]\label{lem:droms}
If a RAAG $A(\Gamma)$ embeds into a knot group, then each connected component of $\Gamma$ is a tree.
Hence $\Gamma$ is a forest.
\end{lemma}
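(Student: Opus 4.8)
The plan is to prove the contrapositive-flavoured statement that $\Gamma$ contains no cycle, which is exactly the assertion that every component is a tree, i.e. that $\Gamma$ is a forest. Since a shortest cycle in any graph is chordless, it suffices to rule out an induced cycle $C_n\subset\Gamma$ for every $n\ge 3$; by the RAAG version of Lemma~\ref{lem:induce}, such an induced subgraph would yield an embedding $A(C_n)\le A(\Gamma)\le G(K)$. (The unknot is excluded trivially, its group being $\mathbb{Z}$.) I would split the argument into the triangle case $n=3$ and the case $n\ge 4$. For $n=3$ the three vertices induce a copy of $K_3$ and $A(K_3)=\mathbb{Z}^3$; since the rank of an abelian subgroup of a knot group is at most $2$, this is impossible (\cite{AFW}), exactly as in the proof of Lemma~\ref{lem:triangle}.

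For $n\ge 4$, write $C_n$ with vertices $v_1,\dots,v_n$ in cyclic order (indices mod $n$). The key local observation is that, because $n\ge 4$, the two neighbours $v_{i-1},v_{i+1}$ of $v_i$ are non-adjacent, so the induced subgraph on $\{v_{i-1},v_i,v_{i+1}\}$ is a path $P_3$ and $\langle v_{i-1},v_i,v_{i+1}\rangle\cong F_2\times\mathbb{Z}$ with centre $\langle v_i\rangle$ and free factor $\langle v_{i-1},v_{i+1}\rangle$. In particular the centralizer $C_{G(K)}(v_i)$ is non-abelian for every $i$. As in the treatment of triangle (d) in Lemma~\ref{lem:triangle}, an element of a knot group whose centralizer is strictly larger than $\mathbb{Z}^2$ (here even non-abelian) must, after conjugation, lie in a Seifert fibered piece as a power of the regular fiber, and its centralizer then coincides with the fundamental group of that piece (\cite{AFW}; cf.\ \cite{HMT2}).

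Recording each such piece as the vertex $\tilde M_i$ of the Bass--Serre tree $T$ of the torus decomposition fixed by $C_{G(K)}(v_i)=\mathrm{Stab}(\tilde M_i)$, I obtain the following picture. Since $v_{i\pm 1}$ commutes with $v_i$, it lies in $\mathrm{Stab}(\tilde M_i)$; and since $v_{i-1}$ and $v_{i+1}$ do not commute, neither of them can be central in $\mathrm{Stab}(\tilde M_i)$. Hence $\tilde M_{i-1}$ and $\tilde M_{i+1}$ are each distinct from $\tilde M_i$ and adjacent to it in $T$. Consequently $\tilde M_1\sim\tilde M_2\sim\cdots\sim\tilde M_n\sim\tilde M_1$ is a closed edge walk of positive length with consecutive vertices distinct, and every such walk in a tree must backtrack. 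Thus $\tilde M_{i-1}=\tilde M_{i+1}$ for some $i$, which forces $v_{i-1}$ and $v_{i+1}$ to be powers of the regular fiber of one and the same Seifert piece; in particular they commute, contradicting the non-adjacency of $v_{i-1}$ and $v_{i+1}$ in $C_n$. This eliminates induced cycles of length $\ge 4$, and with the triangle case it follows that $\Gamma$ is a forest.

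The triangle case and the local $P_3$ computation are routine. I expect the main obstacle to be making the centralizer dichotomy precise, namely that a non-abelian centralizer pins the element down to a fiber power of a unique Seifert piece with centralizer equal to that piece group, since this is where the JSJ/torus decomposition theory genuinely enters. This is, however, exactly the input already exploited in Lemma~\ref{lem:triangle}, so it can be imported essentially verbatim, and the remaining step is the purely combinatorial backtracking lemma for walks in trees. One could instead bypass the argument entirely by quoting Katayama~\cite{K1}, but the Bass--Serre tree argument above seems the most self-contained route given the tools already developed.
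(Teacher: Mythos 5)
Your route is genuinely different from the paper's, which disposes of this lemma in two lines: since a subgroup of a knot group is again a $3$-manifold group (pass to the corresponding covering space), Droms' theorem \cite{D} gives that every component of $\Gamma$ is a tree or a triangle $K_3$, and the triangle is excluded because $A(K_3)=\mathbb{Z}^3$ cannot embed in a knot group \cite[Theorem 5.4.2]{N}. Your proposal avoids Droms entirely and argues directly with the torus decomposition. The reduction to induced cycles, the triangle case, the identification of $C_{G(K)}(v_i)$ with a vertex stabilizer $\mathrm{Stab}(\tilde M_i)$ of the Bass--Serre tree with $v_i$ a power of the regular fiber (the same input from \cite{AFW} as in case (d) of the proof of Lemma \ref{lem:triangle}), and the distinctness $\tilde M_{i\pm 1}\neq \tilde M_i$ are all correct.

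There is, however, a genuine gap: you assert that $\tilde M_{i-1}$ and $\tilde M_{i+1}$ are \emph{adjacent} to $\tilde M_i$ in the tree, but what you actually prove (non-centrality of $v_{i\pm 1}$ in $\mathrm{Stab}(\tilde M_i)$) yields only distinctness; nothing in your argument bounds the distance between these vertices, so your closed walk is not yet an edge walk and the backtracking lemma for trees does not apply as stated. Adjacency is in fact true, but proving it requires real additional input: if $d(\tilde M_{i-1},\tilde M_i)\ge 2$, then $v_{i-1}$ fixes two distinct edges at an intermediate vertex, and one must invoke malnormality of the peripheral system for hyperbolic pieces \cite{HW}, and, for Seifert pieces, that two distinct peripheral subgroups meet in the fiber subgroup together with the non-matching of fibers of adjacent Seifert pieces. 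The cleanest repair bypasses adjacency altogether: concatenate the geodesics $[\tilde M_1,\tilde M_2],[\tilde M_2,\tilde M_3],\dots,[\tilde M_n,\tilde M_1]$, each nontrivial by distinctness, into a closed walk; since a nontrivial reduced closed path in a tree cannot exist, some junction $\tilde M_i$ must backtrack, meaning the geodesics from $\tilde M_i$ to $\tilde M_{i-1}$ and to $\tilde M_{i+1}$ share their initial edge $e$. Now $v_{i-1}$ fixes both endpoints $\tilde M_{i-1}$ and $\tilde M_i$, hence fixes $e$ pointwise, and likewise $v_{i+1}$ fixes $e$. Since $\mathrm{Stab}(e)$ is a conjugate of a JSJ torus group, hence isomorphic to $\mathbb{Z}^2$, the elements $v_{i-1}$ and $v_{i+1}$ commute, contradicting that they generate $F_2$. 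With this modification your argument is complete, though it remains considerably longer than the paper's citation-based proof.
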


\begin{proof}
Note $A(K_3)=\mathbb{Z}^3$.
Since $\mathbb{Z}^3$ cannot embed into a knot group (see \cite[Theorem 5.4.2]{N}),
the conclusion immediately follows from the above claim of Droms \cite{D}.
\end{proof}

We can prove an analogous result for TRAAGs owing to Lemma \ref{lem:pride}.

\begin{lemma}\label{lem:forest}
If a TRAAG $A(\Gamma)$ embeds into a knot group $G(K)$, then the underlying graph $\bar{\Gamma}$  of $\Gamma$ is a forest.
\end{lemma}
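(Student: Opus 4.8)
The plan is to reduce this statement for TRAAGs to the already-established statement for RAAGs, by passing through the untwisted RAAG sitting inside the twisted one. The whole argument is a composition of two embeddings followed by a single application of the RAAG case.

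First I would invoke Lemma \ref{lem:pride}: since $A(\Gamma)$ embeds into a knot group by hypothesis, that lemma guarantees the RAAG $A(\bar{\Gamma})$ based on the underlying graph $\bar{\Gamma}$ is a subgroup of $A(\Gamma)$. Concretely, the squares of the generators generate a copy of $A(\bar{\Gamma})$ inside $A(\Gamma)$. Next, composing the two inclusions $A(\bar{\Gamma}) \le A(\Gamma) \le G(K)$ shows that the RAAG $A(\bar{\Gamma})$ itself embeds into the knot group $G(K)$. At this point I would apply Lemma \ref{lem:droms}, Katayama's refinement of Droms' theorem: any RAAG embedding into a knot group has each of its connected components a tree, so $\bar{\Gamma}$ is a forest, which is exactly the desired conclusion.

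The substance of the argument is not in this final composition but in the two lemmas it draws upon. The genuinely hard step, already carried out in Lemma \ref{lem:pride}, is verifying Pride's $C(4)$ and $T(4)$ conditions for the TRAAG presentation, which in turn rests on the triangle-exclusion of Lemma \ref{lem:triangle}. Once those are in hand, the forest conclusion follows formally. The only point I would be careful about here is that the map $A(\bar{\Gamma}) \to A(\Gamma)$ supplied by Theorem 2(iv) of \cite{P} is a genuine injection onto the square subgroup (not merely a surjection), which holds because the only defining relations among the squares are precisely the commuting relations recording the edges of $\bar{\Gamma}$, so the square subgroup has exactly the presentation of $A(\bar{\Gamma})$.
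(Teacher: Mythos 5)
Your proof is correct and uses exactly the same two ingredients as the paper --- Lemma \ref{lem:pride} (the square subgroup giving $A(\bar{\Gamma})\le A(\Gamma)$, via Pride's $C(4)$--$T(4)$ theorem and the triangle exclusion) followed by Lemma \ref{lem:droms} --- so it is essentially the paper's argument. The only difference is cosmetic: the paper argues by contradiction, first passing to an induced cycle $C$ and applying Lemma \ref{lem:pride} to $C$, whereas you apply Lemma \ref{lem:pride} to all of $\Gamma$ at once, which is if anything a slight streamlining.
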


\begin{proof}
Assume that $\bar{\Gamma}$ contains a cycle  for a contradiction.
%Let $a_0,a_1,\dots,a_n\ (n\ge 2)$ be the vertices of the cycle.
We can assume that the vertices of the cycle span an induced cycle $C$.
Then $A(\Gamma)$ has a subgroup $A(C)$.
By Lemma \ref{lem:pride}, $A(\bar{C})\le A(C)\le A(\Gamma)\le G(K)$, which is impossible by Lemma \ref{lem:droms}.
\end{proof}

%%%%%%%%%%%%%
\section{Torus knots}\label{sec:torusknot}

Among torus knots, we can restrict ourselves to torus knots of even type by Lemma \ref{lem:kb}.
In this section, we prove the following.

\begin{theorem}\label{thm:torusknot}
Let $K$ be a non-trivial torus knot of even type.
Then a TRAAG $A(\Gamma)$ embeds into $G(K)$ if and only if
$\Gamma$ is a sink star  $S_n$ for some $n\ge 1$.
\end{theorem}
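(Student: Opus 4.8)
The plan is to prove both implications using the Seifert structure of the torus knot group. Write $G(K)=\langle u,v\mid u^{2p}=v^q\rangle$ for the torus knot of type $(2p,q)$, let $h=u^{2p}=v^q$ be the regular fiber generating the center $Z=\langle h\rangle$, and let $\bar G=G(K)/Z\cong \mathbb{Z}/2p*\mathbb{Z}/q$ be the base orbifold group; the even exceptional fiber is $e=u$ with $e^{2p}=h$. For the ``only if'' part I would start from an embedding $A(\Gamma)\le G(K)$. By Lemma \ref{lem:forest} the underlying graph $\bar\Gamma$ is a forest. Fixing a directed edge and applying Lemmas \ref{lem:induce} and \ref{lem:kb} to it, its head $w$ satisfies $w^x=e^p$ for some $x$, so $w^2$ is conjugate to $e^{2p}=h$; as $h$ is central this gives $w^2=h$. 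In particular $w^2$ is central in $G(K)$ and $w$ is non-central.

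Next I would prove that $w$ is adjacent in $\bar\Gamma$ to every other vertex $c$. Since $w^2=h$ is central, $[w^2,c]=1$ in $G(K)$, hence $[w^2,c^2]=1$; by Lemma \ref{lem:pride} the squares of the generators span a copy of the RAAG $A(\bar\Gamma)$ whose only relations are the commuting relations along edges, so $[w^2,c^2]=1$ forces $w\sim c$. A forest in which one vertex is adjacent to all the others is a star, so $\bar\Gamma=K_{1,n}$. It then remains to orient the $n$ edges at $w$. No edge can be undirected: that would make $\langle w,c\rangle\cong\mathbb{Z}^2$, yet $c$ would lie in the centralizer of $w$, which is infinite cyclic because $w$ is non-central, forcing $\langle w,c\rangle$ to be cyclic. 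No edge can point out of $w$ either: $[w,c\rangle$ gives $w^c=w^{-1}$, whence $h=w^2=(w^2)^c=w^{-2}=h^{-1}$, impossible in the torsion-free group $G(K)$. Thus every edge is $[c,w\rangle$ and $\Gamma=S_n$.

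For the ``if'' part I would build an explicit copy of $A(S_n)=\langle a,b_1,\dots,b_n\mid b_iab_i=a\rangle\cong F_n\rtimes_{-1}\mathbb{Z}$. Put $a=u^p$, so that $a^2=h$ is central and $\bar a=\bar u^{\,p}$ is the involution of the factor $\mathbb{Z}/2p$ in $\bar G$. For any $g\in G(K)$ the element $b=ga^{-1}g^{-1}a$ satisfies $bab=a$ (equivalently $a^{-1}ba=b^{-1}$); this is a one-line computation using only that $a^2$ is central. I would then choose conjugators $g_1,\dots,g_n$ so that the involutions $s_i=\bar g_i\bar a\bar g_i^{-1}$, together with $s_0=\bar a$, satisfy a ping-pong condition on the Bass--Serre tree of $\bar G$ and hence generate a free product of $n+1$ copies of $\mathbb{Z}/2$; inside such a group the products $\bar b_i=s_is_0$ freely generate a free group of rank $n$.

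Finally, to check that $a\mapsto u^p$ and $b_i\mapsto g_iu^{-p}g_i^{-1}u^p$ define an injection of $A(S_n)$, I would use abelianization: each $b_i$ maps to $0$ in $G(K)^{\mathrm{ab}}\cong\mathbb{Z}$ while $h$ has nonzero image, so the free group $N=\langle b_1,\dots,b_n\rangle$ meets $Z$ trivially and therefore maps isomorphically to $\bar N\cong F_n$; the same count gives $\langle a\rangle\cap N=1$. Since $a$ normalizes $N$ and inverts each $b_i$, the subgroup $\langle a,N\rangle$ is $N\rtimes_{-1}\langle a\rangle\cong A(S_n)$. I expect the main obstacle to be exactly this ``if'' direction: choosing the $g_i$ so that the associated reflections genuinely generate a free product of $\mathbb{Z}/2$'s, so that the $\bar b_i$ are free and no unexpected relation survives. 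Once that ping-pong on the tree of $\bar G$ is established, the abelianization argument makes faithfulness formal, and the ``only if'' direction goes through cleanly with Lemmas \ref{lem:kb} and \ref{lem:pride}.
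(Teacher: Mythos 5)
The decisive problem is in your ``if'' direction, exactly where you predicted it: you never specify the conjugators $g_1,\dots,g_n$, and the ping-pong claim that $s_0,s_1,\dots,s_n$ generate a free product of $n+1$ copies of $\mathbb{Z}/2$ is asserted rather than proved. That freeness statement is not a finishing touch --- it is the entire technical content of the embedding, and it is precisely what the paper's Proposition \ref{prop:torus1} proves. Your Klein-relation mechanism is the same as the paper's: its generators $z_i=[x^p,(yxy)^i]=x^{-p}(yxy)^{-i}x^p(yxy)^i$ are exactly elements of your shape (a conjugate of $a^{\mp1}$ times $a^{\pm1}$, with $a=x^p$), and $z_ix^pz_i=x^p$ holds because $x^{2p}$ is central. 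The paper then establishes freeness of $\langle z_1,\dots,z_n\rangle$ by hand: it writes $z_iz_j$, $z_iz_j^{-1}$, $z_i^{-1}z_j$ as reduced words in the amalgamated free product $\langle x\rangle *_{x^{2p}=y^q}\langle y\rangle$ and checks that every alternating product stays reduced (alternating $x^{\pm p}$'s with nonzero powers of $yxy$), hence nontrivial. Your plan is salvageable --- the paper's conjugators $(yxy)^i$ witness that a suitable family of reflections exists, and your abelianization argument for $N\cap Z=1$ and $\langle a\rangle\cap N=1$ correctly converts freeness of the $\bar b_i$ into $\langle a,N\rangle\cong A(S_n)$ --- but as submitted, half of the theorem rests on an unproved claim.

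There is also a wrong justification in your ``only if'' direction, though it is local and fixable. You exclude an undirected edge $[w,c]$ by saying the centralizer of $w$ ``is infinite cyclic because $w$ is non-central''; that implication is false in a torus knot group: every non-central $g$ commutes with the central element $h$, so whenever no power of $g$ lies in $\langle h\rangle$ (e.g.\ $g=uv$), one has $C(g)\supseteq\langle g,h\rangle\cong\mathbb{Z}^2$. What saves you is the specific property of $w$: since $w^2=h$ and $w\notin\langle h\rangle$, the image $\bar w$ has order two in $\bar G\cong\mathbb{Z}/2p*\mathbb{Z}/q$, hence is conjugate into the factor $\mathbb{Z}/2p$ (as $q$ is odd), and centralizers of nontrivial elements of a free factor lie in a conjugate of that factor; so $C(w)$ is conjugate to $\langle u\rangle\cong\mathbb{Z}$. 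Equivalently, Lemma \ref{lem:kb} already gives $w$ conjugate to $e^p$, and $C(e^p)=\langle e\rangle$. With that repair, your ``only if'' argument is correct and is a genuinely different, arguably cleaner route than the paper's: the paper obtains the star shape from Katayama's Theorem \ref{thm:katayama} via Lemma \ref{lem:pride} (Lemma \ref{lem:star}) and rules out unwanted edges at the center through the quotient $G(K)/\langle\!\langle h\rangle\!\rangle$ and the structure of commuting elements in free products (Lemma \ref{lem:notail}, Proposition \ref{prop:torus2}), whereas you get the star directly from centrality of $w^2$ together with Lemma \ref{lem:pride}, and kill outgoing edges with the two-line computation $h=w^2=(w^2)^c=w^{-2}=h^{-1}$.
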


Clearly, a sink star $S_n$ has an induced subgraph $mP_1$ for any $m\ (\le n)$.
Hence, if $A(S_n)$ embeds into $G(K)$, then so does $A(mP_1)$ by Lemma \ref{lem:induce}.
But $mP_1$ has no directed edge, so we exclude this possibility from the conclusion.
(Compare Theorem \ref{thm:katayama}(2).)

%%%%

First, we confirm that the TRAAG based on a sink star can embed into $G(K)$ for a non-trivial torus knot $K$ of even type.

\begin{proposition}\label{prop:torus1}
Let $n\ge 1$, and let $\Gamma=S_n$ be a sink star with $n+1$ vertices.
If $K$ is a non-trivial torus knot of even type, then
$A(\Gamma)$ can embed into $G(K)$.
\end{proposition}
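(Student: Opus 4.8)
The plan is to produce an explicit homomorphism from $A(S_n)$ into $G(K)$ and then prove it is injective by passing to the central quotient, where the problem becomes a clean statement about conjugate involutions in a free product.

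Since $K$ is a torus knot of even type $(2p,q)$, write $G(K)=\langle u,v\mid u^{2p}=v^q\rangle$ and set $h=u^{2p}=v^q$, which generates the infinite cyclic center. The central quotient is the free product $\bar G=G(K)/\langle h\rangle\cong\mathbb{Z}_{2p}\ast\mathbb{Z}_q$, and here is exactly where evenness is used: the factor $\mathbb{Z}_{2p}$ contains the (unique) involution $s=\bar u^{\,p}$, the image of the exceptional fiber of index $2p$ from Lemma \ref{lem:kb}. First I would record that the defining relation $b_iab_i=a$ is equivalent to $a^{-1}b_ia=b_i^{-1}$, so that $A(S_n)\cong F_n\rtimes_\theta\mathbb{Z}$, where $\langle b_1,\dots,b_n\rangle\cong A(nP_1)=F_n$ by Lemma \ref{lem:induce} and $\theta$ inverts each free generator. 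I then set $a=u^{p}$ (so $a^2=h$ is central) and, for elements $w_i\in G(K)$ to be chosen, put $c_i=w_iu^{p}w_i^{-1}$ and $g_i=u^{-p}c_i$. Since $c_i^2=w_ih w_i^{-1}=h$, a one-line computation gives $a^{-1}g_ia=g_i^{-1}$, so $\psi\colon A(S_n)\to G(K)$ defined by $a\mapsto u^{p}$, $b_i\mapsto g_i$ is a well-defined homomorphism.

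For injectivity I would reduce modulo the center. Because $a^{2}$ is central of infinite order in $A(S_n)$ and $\psi(a^{2})=h$ has infinite order in $G(K)$, the map $\psi$ descends to $\bar\psi\colon A(S_n)/\langle a^{2}\rangle\to\bar G$, and a kernel chase shows $\psi$ is injective as soon as $\bar\psi$ is. The key structural observation is that $A(S_n)/\langle a^{2}\rangle$ is a free product of copies of $\mathbb{Z}_2$: substituting $s_i=\bar a\,b_i$ turns the relators $\bar a^{2}$ and $\bar a b_i\bar a b_i$ into $\bar a^{2}$ and $s_i^{2}$, so $A(S_n)/\langle a^{2}\rangle\cong\mathbb{Z}_2^{\ast(n+1)}$ on the generators $\bar a,s_1,\dots,s_n$. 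Under $\bar\psi$ these go to $s$ and the conjugate involutions $\bar c_i=\bar w_i\,s\,\bar w_i^{-1}$. Thus everything reduces to one statement: one can choose $w_1,\dots,w_n$ so that $s,\bar c_1,\dots,\bar c_n$ are $n+1$ involutions generating their free product $\mathbb{Z}_2^{\ast(n+1)}$ in $\bar G=\mathbb{Z}_{2p}\ast\mathbb{Z}_q$.

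This last point is the main obstacle, and I would settle it by a ping-pong argument on the Bass--Serre tree of the nontrivial free product $\mathbb{Z}_{2p}\ast\mathbb{Z}_q$ (both factors have order $\ge 2$ for a nontrivial torus knot). Taking $t=uv$, which is hyperbolic on the tree, and $w_i=t^{\,i}$, the elements $\bar c_i=t^{\,i}s\,t^{-i}$ are cyclically reduced words with distinct centers and increasing syllable lengths; normal-form/ping-pong considerations then show that no nontrivial reduced word in $s,\bar c_1,\dots,\bar c_n$ collapses, so these involutions generate $\mathbb{Z}_2^{\ast(n+1)}$. Consequently $\bar\psi$ is injective, hence so is $\psi$, giving $A(S_n)\le G(K)$. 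I expect the routine verifications (the relation check via $c_i^2=h$ and the presentation computation for the quotient) to be straightforward, with the genuine content concentrated in the freeness of the conjugate involutions; it is worth emphasizing that the existence of the involution $s$—and therefore the whole construction—relies precisely on $K$ being of even type.
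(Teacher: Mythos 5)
Your overall architecture is sound, and everything up to the last step is correct: the elements $g_i=u^{-p}w_iu^pw_i^{-1}=[u^p,w_i^{-1}]$ do satisfy $g_iu^pg_i=u^p$ (this is the same mechanism as in the paper, whose generators $z_i=[x^p,(yxy)^i]$ are likewise commutators of $x^p$ with powers of a fixed conjugator), the identification $A(S_n)/\langle a^2\rangle\cong\mathbb{Z}_2^{\ast(n+1)}$ is right, and the kernel chase reducing injectivity of $\psi$ to injectivity of $\bar\psi$ works because $a^2$ is central in $A(S_n)$ and $\psi(a^2)=h$ has infinite order. Where you genuinely diverge from the paper is the injectivity argument: the paper never leaves $G(K)$ and verifies by hand, in the amalgamated product $\langle x\rangle\ast_{x^{2p}=y^q}\langle y\rangle$, that alternating words in $x^{\pm p}$ and the $z_i$ remain reduced; you instead pass to the central quotient $\mathbb{Z}_{2p}\ast\mathbb{Z}_q$ and reduce everything to freeness of conjugate involutions. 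That reduction is clean, and it handles the mixed words $w(b_1,\dots,b_n)a^k$ more transparently than the paper's phrasing does.

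The gap is that your decisive claim --- that $s,\,tst^{-1},\dots,t^nst^{-n}$ with $t=uv$ freely generate $\mathbb{Z}_2^{\ast(n+1)}$ --- is asserted rather than proved, and the reasons offered for it are not correct as stated. First, distinctness of conjugate involutions (equivalently, of their fixed vertices in the Bass--Serre tree) is not by itself enough: in $\mathbb{Z}_2\ast\mathbb{Z}_2=\langle r_1\rangle\ast\langle r_2\rangle$ the involutions $r_1$, $r_2$, $r_3:=r_1r_2r_1$ are distinct with distinct fixed points, yet $r_3r_1r_2r_1=1$ is a nontrivial alternating relation, so they generate $D_\infty$ rather than $\mathbb{Z}_2^{\ast3}$; some specific feature of your configuration must therefore enter the proof. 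Second, with your choice $t=uv$ the sketch fails outright when $p=1$, i.e.\ for every $(2,q)$-torus knot, the trefoil included: there $s=\bar u$ generates the $\mathbb{Z}_2$-factor, so $st=\bar v$, the words $\bar c_i=(\bar u\bar v)^i\bar u(\bar v^{-1}\bar u)^i$ are not cyclically reduced, and alternating products telescope heavily (for instance $\bar c_1\bar c_2$ collapses from $14$ letters to $\bar u\bar v^2\bar u\bar v^{-1}\bar u\bar v^{-1}\bar u$), so ``increasing syllable lengths, hence no collapse'' is simply not what happens. The claim is nevertheless true: one can check that every cancellation cascade terminates in a syllable $\bar v^{\pm2}\ne1$ (this is where $q\ge3$ is used) or in a sign change among the $\bar v$-syllables, after which comparison of normal forms with powers of $t$ finishes the argument --- but that case analysis is the real content of the proposition and it is missing. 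The cheapest repair is to change the conjugator: take $w_i=(vuv)^i$ instead of $(uv)^i$. Then $\bar u^p$ never abuts another $\bar u$-letter, every alternating word in $s,\bar c_1,\dots,\bar c_n$ reduces to an alternating sequence of $\bar u^{p}$-syllables and powers of $\bar v\bar u\bar v$, and your ping-pong becomes immediate for all $p\ge1$; this is exactly why the paper conjugates by $(yxy)^i$ rather than by $(xy)^i$.
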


\begin{proof}
Let $G(K)=\langle x, y \mid x^{2p}=y^q \rangle$ be the standard presentation of $G(K)$.
Note that $x^{2p}$ is central.
Set $z_i=[x^p, (yxy)^i]$ for $i=1,2,\dots$.
Then $z_ix^pz_i=x^p$.
For, 
\begin{align*}
z_i x^p z_i&=[x^p,(yxy)^i]x^p [x^p,(yxy)^i]\\
&=x^{-p} (yxy)^{-i} x^p (yxy)^i \cdot x^p \cdot x^{-p} (yxy)^{-i} x^p (yxy)^i \\
&= x^{-p} (yxy)^{-i} x^{2p}(yxy)^i =x^p.
\end{align*}

We show that $Z=\{z_i \mid i=1,2,\dots, n\}$ generates a free group of rank $n$ in $G(K)$.
%Recall that the commutator subgroup $D(G)$ of $G(K)$ is a free group of rank $2g$, where $g=(2p-1)(q-1)/2$.
%Hence it suffices to see that $\{z_i \mid i=1,2,\dots, n\}$ generates a free group of rank $n$ in $D(G)$.
First, $Z\cap Z^{-1}=\varnothing$.
This follows from the observation 
\begin{align*}
z_iz_j&=x^{-p}(yxy)^{-i}x^p(yxy)^i\cdot x^{-p}(yxy)^{-j} x^p (yxy)^j\\
&= x^{-p}(y^{-1} (x^{-1}y^{-2})^{i-1} x^{-1}y^{-1}) x^p (y(xy^2)^{i-1}xy) \cdot \\
& \qquad x^{-p}(y^{-1} (x^{-1}y^{-2})^{j-1} x^{-1}y^{-1}) x^p (y(xy^2)^{j-1}xy) \ne 1
\end{align*}
 in the amalgamated free product $G(K)=\langle x \rangle *_{x^{2p}=y^q} \langle y \rangle$.
 (Note that $q>2$.)
Second,  let $w=u_1u_2\cdots u_m$ such that each $u_j\in Z^{\pm 1}$ and $u_j u_{j+1}\ne 1$ for $1\le j <m$.
Then we see $w \ne 1$. %because a cancellation only happens between adjacent factors, but $x^{\pm p}$ contained in each $u_j$ remains.
For, if both $u_j, u_{j+1}\in Z$ or $Z^{-1}$, then  there is no cancellation in the product $u_ju_{j+1}$ as shown above.
Also, if $i\ne j$, then  each of 
\begin{align*}
z_i z_j^{-1}&=x^{-p} (yxy)^{-i} x^p(yxy)^i\cdot (yxy)^{-j}x^{-p} (yxy)^j x^p\\
&=x^{-p}(yxy)^{-i}x^p (yxy)^{i-j} x^{-p}(yxy)^j x^p,   \\
z_i^{-1}z_j&=(yxy)^{-i}x^{-p} (yxy)^i x^p\cdot x^{-p}(yxy)^{-j} x^p (yxy)^j \\
&= (yxy)^{-i}x^{-p}(yxy)^{i-j} x^p (yxy)^j
\end{align*}
yields a non-trivial reduced word.
Thus $w$ is an alternate sequence of $x^{\pm p}$ and $(yxy)^{k} \ (k\ne 0)$, which implies $w\ne 1$ in $G(K)$.

Hence the set $Z$ generates a free subgroup of rank $n$ in $G(K)$ (see \cite[Proposition 1.9]{LS}).
Thus $\langle x^p,Z\rangle\cong A(S_n)$.
\end{proof}
%%%

\begin{remark}\label{rem:cable}
A similar argument shows that if $\Gamma=S_n$, then $A(\Gamma)$ can embed into the fundamental group of a cable space of even type.

Let $M$ be a cable space of type $(2p,q)$.
Then $\pi_1(M)=\langle x, y, t \mid x^{2p}=t^{2p}y^q, [y,t]=1 \rangle$, where
$x$ represents the exceptional fiber of index $2p$.
Set $z_i=[x^p, (yxy)^i]$ for $i=1,2,\dots$ as before.
Then $x^{2p}$ is central in $\pi_1(M)$, and $z_ix^pz_i=x^p$.

Note that $\pi_1(M)=\langle x\rangle *_{x^{2p}=t^{2p}y^q} \langle y, t \mid [y,t]=1 \rangle$ is the amalgamated free product,
and $z_i$ is reduced there.  The remaining argument is the same as in the proof of Proposition \ref{prop:torus1}.
\end{remark}

%%%%%%%%%%%%%
Conversely, we suppose that a TRAAG $A(\Gamma)$ embeds into a torus knot group $G(K)$.
We will show that $\Gamma$ is a sink star.

\begin{lemma}\label{lem:star}
$\bar{\Gamma}$ is a star.
\end{lemma}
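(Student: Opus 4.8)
The plan is to sidestep the directed structure of $\Gamma$ entirely and reduce the statement to the ordinary RAAG classification already recorded in Theorem~\ref{thm:katayama}. The key input is Lemma~\ref{lem:pride}: since by hypothesis $A(\Gamma)$ embeds into the torus knot group $G(K)$, that lemma supplies an embedding of the RAAG $A(\bar{\Gamma})$ on the underlying graph as a subgroup of $A(\Gamma)$. Composing the two embeddings gives $A(\bar{\Gamma})\le A(\Gamma)\le G(K)$, so the ordinary RAAG $A(\bar{\Gamma})$ embeds into the group of a non-trivial torus knot.

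Next I would apply Theorem~\ref{thm:katayama}(2), the Seifert-fibered (torus knot) case of Katayama's characterization, to $A(\bar{\Gamma})$. It forces $\bar{\Gamma}$ to be one of exactly two shapes: a disjoint union $mP_1$ of isolated vertices, or a single star $T_n$. In particular this already excludes a star carrying extra isolated vertices, or a graph with several non-trivial components, so connectivity of the final answer comes for free rather than needing a separate argument.

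To finish, I would invoke the standing Assumption that $\Gamma$ has a directed edge, i.e.\ $D\ne\varnothing$. Forgetting orientation, $\bar{\Gamma}$ then carries at least one undirected edge, which rules out the edgeless possibility $\bar{\Gamma}=mP_1$; the only surviving option is $\bar{\Gamma}=T_n$ for some $n\ge 1$, which is the claim. I do not expect a genuine obstacle: all the substance has been front-loaded into Lemma~\ref{lem:pride} (resting on the triangle-exclusion Lemma~\ref{lem:triangle} and Pride's $C(4)$--$T(4)$ theory) and into Katayama's theorem. The only points to verify are that the hypotheses of the quoted results are in force, namely that $A(\Gamma)$ embeds into a knot group and that $K$ is a non-trivial torus knot, so that part~(2) rather than another clause of Theorem~\ref{thm:katayama} is the one being applied.
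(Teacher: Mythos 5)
Your proposal is correct and coincides with the paper's own proof: both invoke Lemma~\ref{lem:pride} to get $A(\bar{\Gamma})\le A(\Gamma)\le G(K)$, then apply Theorem~\ref{thm:katayama}(2) to conclude $\bar{\Gamma}$ is $mP_1$ or a star, and finally use the standing assumption $D\ne\varnothing$ to discard the edgeless case $mP_1$. No gaps; nothing further is needed.
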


\begin{proof}
By Lemma \ref{lem:pride}, the RAAG $A(\bar{\Gamma})$ can embed into $G(K)$.
Then Theorem \ref{thm:katayama} (\cite{K1})  shows that
$\bar{\Gamma}$ is a star.
(Since we are assuming that $\Gamma$ has at least one directed edge, we can exclude the case where
$\bar{\Gamma}$ is an empty graph $mP_1$.)
\end{proof}

If $\Gamma$ has order two, then $\Gamma$ is a sink star.
Hence we assume that $\Gamma$ has at least three vertices.
Let $a$ be the central vertex, and let  $b_1,b_2,\dots, b_n\ (n\ge 2)$ be the end vertices.

%\begin{lemma}\label{lem:head=tail}
%If  there is a directed edge with tail $a$, then there is no directed edge whose head is $a$.
%\end{lemma}

%\begin{proof}
%We assume that $\Gamma$ has  edges $[a,b_i\rangle$ and $[b_i,a\rangle$.
%Then $ab_ia=b_i$, so $a$ is a generalized torsion element of order two.
%Hence $[a]=0 \in H_1(E(K))=\mathbb{Z}$.

%On the other hand, the existence of the edge $[b_j,a\rangle$ implies that
%$a$ is conjugate to a power of the (unique) exceptional fiber $e$ with index $2p$.
%Precisely, $a^x=e^p$ for some $x\in G(K)$ (\cite{HMT2}).
%Thus $[a]=p[e]$ in $H_1(E(K))$, which gives $[e]=0$.
%This contradicts the fact that $[e]=\pm q \in H_1(E(K))$.
%\end{proof}

\begin{lemma}\label{lem:notail}
There is no directed edge with tail $a$.
\end{lemma}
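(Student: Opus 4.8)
The goal is to show that in the sink-star setting, the central vertex $a$ cannot be the tail of any directed edge. Recall that $\bar\Gamma$ is a star with center $a$ and ends $b_1,\dots,b_n$ ($n\ge 2$), so every edge joins $a$ to some $b_i$ and carries either a commuting relation or a Klein relation with some orientation. A directed edge with tail $a$ is one of the form $[a,b_i\rangle$, giving the relation $ab_ia=b_i$, equivalently $b_i^{\,a}=b_i^{-1}$.

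The plan is to argue by contradiction: suppose some edge is $[a,b_i\rangle$. Since $n\ge 2$ there is a second end $b_j$ ($j\ne i$) joined to $a$ by an edge of one of the three types. I would examine the induced subgraph on $\{a,b_i,b_j\}$ (which is legitimate to study via Lemma \ref{lem:induce}) and in each case extract an abelian or Klein-type subgroup that forces a $\mathbb{Z}^3$, a torsion element, or a contradiction with $G(K)$ being a torus knot group. The cleanest route is probably to use the relation $b_i^{\,a}=b_i^{-1}$ together with the structure of $G(K)=\langle x,y\mid x^{2p}=y^q\rangle$ as an amalgamated free product: the element $a$ conjugates $b_i$ to its inverse, so $(a,b_i)$ is a generalized torsion pair, and by Lemma \ref{lem:kb} (applied to the induced edge $[a,b_i\rangle$, reading the relation as a Klein relation on these two generators) the generator $b_i$ must land, up to conjugacy, on a power of the exceptional fiber while $a^2$ maps to a regular fiber. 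One then checks that this placement is incompatible with $a$ simultaneously commuting with, or forming a Klein pair with, a second end $b_j$, because the centralizer and the fiber structure of the unique Seifert fibration on a torus knot exterior are too rigid to accommodate both.

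The key computation I expect to carry out is to pin down where $a$ itself sits. Because $a$ has degree $n\ge 2$ in $\Gamma$, it interacts nontrivially with at least two ends; whatever the relation types, the subgroup $\langle a,b_j\rangle$ is either $\mathbb{Z}^2$ (commuting case) or a Klein bottle group (some directed edge). Combined with $b_i^{\,a}=b_i^{-1}$ from the assumed edge $[a,b_i\rangle$, I would show the centralizer $C(a)$ (or $C(a^2)$) is strictly larger than $\mathbb{Z}^2$, exactly as in the triangle analysis of Lemma \ref{lem:triangle}, cases (d) and (g). This forces $a$ (or $a^2$) to be a power of a regular fiber $h=e^{2p}$ and $C(a)=\pi_1(M)=G(K)$. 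But then the Klein/torsion relation $b_i^{\,a}=b_i^{-1}$ would have to be realized inside this Seifert-fibered group with $a$ central, and by \cite[Theorem 1.10]{HMT2} the element $b_i$ is conjugate to a power of the exceptional fiber $e$; the identity $b_i^{\,a}=b_i^{-1}$ with $a$ central gives $b_i=b_i^{-1}$, i.e. $b_i^2=1$, contradicting torsion-freeness of knot groups.

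The main obstacle will be organizing the case analysis on the type of the second edge at $a$ (commuting $[a,b_j]$, incoming $[b_j,a\rangle$, or outgoing $[a,b_j\rangle$) so that in every case the centralizer-enlargement argument runs uniformly, and being careful that ``$a$ is central in $\pi_1(M)$'' is deduced correctly from $C(a)=\pi_1(M)$ rather than assumed. A secondary subtlety is that $a$ might itself be a generalized torsion element if it is the \emph{head} of some edge elsewhere, but in the present sink-star hypothesis $a$ is the center and we are precisely testing whether it can be a tail; so I would simply note that the assumed edge $[a,b_i\rangle$ makes $a$ act as an order-two reverser on $b_i$, and the rigidity of the torus-knot Seifert fibration (its center is generated by a regular fiber, and generalized torsion elements of order two are conjugate into powers of the unique exceptional fiber) closes every case. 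I would conclude that no directed edge can have tail $a$.
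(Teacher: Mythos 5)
Your proposal has a concrete error at its foundation: the Klein relation is read backwards. In this paper's convention, the edge $[a,b_i\rangle$ (tail $a$, head $b_i$) imposes $ab_ia=b_i$, which is equivalent to $a^{b_i}=a^{-1}$, \emph{not} to $b_i^{\,a}=b_i^{-1}$; it is the tail $a$ that gets inverted, and the head $b_i$ that does the conjugating. This reversal propagates through your argument: the pair $(a,b_i)$ is a generalized torsion pair because $b_i$ inverts $a$ (not because ``$a$ conjugates $b_i$ to its inverse''); Lemma \ref{lem:kb} places the \emph{head} $b_i$ on a power of the exceptional fiber, so it is $b_i^2$, not $a^2$, that is conjugate to the regular fiber $h$; and your closing contradiction (``$b_i^{\,a}=b_i^{-1}$ with $a$ central gives $b_i^2=1$'') is drawn from the false relation. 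Moreover, the step you yourself flag as the main obstacle --- the case analysis over the type of the second edge at $a$ --- is exactly where the work would lie, and it is not carried out: with the correct orientation, an incoming edge $[b_j,a\rangle$ gives $b_j^{\,a}=b_j^{-1}$, which puts nothing new into $C(a)$ (indeed $b_j,b_j^2\notin C(a)$), so in that case one must pass to $C(a^2)$ and argue differently than in the commuting case; the hedge ``$C(a)$ (or $C(a^2)$)'' does not resolve this. So as written the proof does not go through, although the skeleton (enlarge a centralizer, force centrality, contradict a Klein relation) could be repaired once the relations are oriented correctly.

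For comparison, the paper's proof is much shorter and is only available with the correct orientation: since the head $b_i$ of the assumed edge satisfies $b_i^{\,x}=e^p$ by Lemma \ref{lem:kb}, its square satisfies $(b_i^2)^x=e^{2p}=h$, and because the regular fiber $h$ is \emph{central} in the torus knot group this forces $b_i^2=h$ on the nose, so $b_i^2$ is central in $G(K)$. This already contradicts Lemma \ref{lem:induce}: the ends $b_i$ and $b_j$ are non-adjacent in the star, so $\langle b_i,b_j\rangle$ is free of rank two, and in a rank-two free group $b_i^2$ commutes only with powers of $b_i$. No centralizer computation, no appeal to the triangle analysis, and no case analysis on the second edge at $a$ is needed; the only input is knowing which endpoint of a directed edge lands on the exceptional fiber.
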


\begin{proof}
Assume that there is a directed edge $[a,b_i\rangle$.
Then $b_i^x=e^p$ for some $x\in G(K)$, where $e$ is the exceptional fiber of index $2p$
by Lemma \ref{lem:kb}.
Hence $(b_i^2)^x=(b_i^x)^2=e^{2p}=h$, where $h$ is a regular fiber.
Then $b_i^2=h$, because $h$ is central in $G(K)$.
Thus $b_i^2$ is also central.
However, this contradicts the fact that $\langle b_i,b_j\rangle$ with $i\ne j$  is a free subgroup of rank two in $G(K)$ by Lemma \ref{lem:induce}.
\end{proof}

%%%
\begin{proposition}\label{prop:torus2}
$\Gamma$ is a sink star.
\end{proposition}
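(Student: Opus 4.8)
The plan is to combine the two facts already established for this configuration—that $\bar\Gamma$ is a star (Lemma \ref{lem:star}) with center $a$ and ends $b_1,\dots,b_n$ ($n\ge 2$), and that no directed edge has tail $a$ (Lemma \ref{lem:notail})—with a centralizer computation in the torus knot group. The key observation is that any undirected edge at $a$ would force a copy of $\mathbb{Z}^2$ inside the centralizer $C(a)$, whereas I will show $C(a)$ is infinite cyclic; since $\mathbb{Z}^2$ cannot embed in $\mathbb{Z}$, no undirected edge can occur. Then every edge is directed, and by Lemma \ref{lem:notail} every directed edge has head $a$, which is precisely the definition of a sink star.

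First I would record the consequences of having at least one directed edge. By the standing assumption together with Lemma \ref{lem:star}, some edge incident to $a$ is directed, and by Lemma \ref{lem:notail} it must be of the form $[b_i,a\rangle$. Applying Lemma \ref{lem:kb} to the induced Klein-bottle subgroup $\langle b_i,a\rangle$, the head $a$ satisfies $a^x=e^p$, where $e$ is the exceptional fiber of index $2p$ of the Seifert fibered exterior $M=E(K)$. Thus $a$ is conjugate to $e^p$; squaring gives $(a^2)^x=e^{2p}=h$, a regular fiber, and since $h$ is central we get $a^2=h$ central in $G(K)$.

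The heart of the argument is the computation of $C(a)$. Since $a$ is conjugate to $e^p$, it suffices to determine $C(e^p)$. Using the central extension $1\to\langle h\rangle\to G(K)\to G(K)/\langle h\rangle\to 1$ with quotient $\mathbb{Z}_{2p}*\mathbb{Z}_q$, the image of $e^p$ is the order-two element of the factor $\mathbb{Z}_{2p}=\langle\bar e\rangle$, whose centralizer in a free product is the whole factor $\langle\bar e\rangle$. Pulling back along the projection shows $C(e^p)=\langle e\rangle\cong\mathbb{Z}$, hence $C(a)\cong\mathbb{Z}$. I expect this centralizer computation to be the main obstacle, as it relies on the structure of centralizers in free products and requires checking that $a$ is genuinely non-central, which holds because the image of $e^p$ is nontrivial.

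With $C(a)\cong\mathbb{Z}$ in hand, I would finish by contradiction. Suppose some edge $[a,b_j]$ is undirected. By Lemma \ref{lem:induce} the induced subgraph on $\{a,b_j\}$ is a single undirected edge, so $\langle a,b_j\rangle\cong A(P_2)=\mathbb{Z}^2$. But $a$ and $b_j$ commute, so $\langle a,b_j\rangle\le C(a)\cong\mathbb{Z}$, which cannot contain $\mathbb{Z}^2$. This contradiction rules out undirected edges; every edge is therefore directed, and by Lemma \ref{lem:notail} each has head $a$, so $\Gamma=S_n$ is a sink star.
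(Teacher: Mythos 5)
Your proof is correct, and it runs on the same core device as the paper's: use the directed edge $[b_i,a\rangle$ and Lemma \ref{lem:kb} to get $a^x=e^p$ and $a^2=h$ central, then pass to the quotient $\phi\colon G(K)\to G(K)/\langle h\rangle\cong\mathbb{Z}_{2p}*\mathbb{Z}_q$, where commutation is governed by free-product structure. The difference is the endgame. The paper applies \cite[Corollary 4.1.6]{MKS} to the commuting pair $\phi(a),\phi(b_j)$ and, in each of the two cases that result allows, concludes that some nonzero power of $b_j$ is a power of $h$, hence central; the contradiction is then with $\langle b_i,b_j\rangle$ being free of rank two, so the paper's argument needs a second end vertex $b_i$. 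You instead pin down the centralizer exactly: $C(\bar{e}^p)=\langle\bar{e}\rangle$ in the free product (the same MKS fact, specialized to a nontrivial element of an abelian factor), and the pull-back is $\phi^{-1}(\langle\bar{e}\rangle)=\langle e\rangle$ because $\ker\phi=\langle h\rangle=\langle e^{2p}\rangle\le\langle e\rangle$; hence $C(a)\cong\mathbb{Z}$, and the contradiction $\mathbb{Z}^2\cong\langle a,b_j\rangle\le C(a)$ is immediate. What your version buys: the contradiction is local to the edge $[a,b_j]$ (no appeal to $\langle b_i,b_j\rangle\cong F_2$), and you obtain the stronger, reusable structural fact $C(a)\cong\mathbb{Z}$. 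What the paper's buys: it quotes MKS directly on the two images and never needs to justify a centralizer pull-back. Both derivations are sound; in particular your two delicate steps—the centralizer computation in the free product and the identification $\phi^{-1}(\langle\bar{e}\rangle)=\langle e\rangle$—check out.
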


\begin{proof}
By our assumption that $\Gamma$ has at least one directed edge and Lemma \ref{lem:notail}, 
there is a directed  edge $[b_i,a\rangle$.

Suppose that there is an undirected edge $[a,b_j]$ with $i\ne j$ for a contradiction.
Again, $a^x=e^p$ for some $x\in G(K)$, where $e$ is the exceptional fiber of index $2p$,
and $a^2=h$, a regular fiber, as in the proof of Lemma \ref{lem:notail}.

Let $G(K)=\langle e, f \mid e^{2p}=f^q \rangle$.
Then $h=e^{2p}=f^q$.
Consider the natural projection
\[
\phi\colon G(K)\to G(K)/\langle\! \langle h \rangle\!\rangle=\langle e \mid e^{2p}=1 \rangle * \langle f \mid f^q=1 \rangle.
\]
Note that $a$ and $b_j$ commute in $G(K)$, so do $\phi(a)$ and $\phi(b_j)$.
Here, $\phi(a)=\phi((e^p)^{x^{-1}})$.
Thus 
$\phi(b_j)$ lies in a conjugate subgroup $\phi(x) \langle e \mid e^{2p}=1 \rangle \phi(x^{-1})$, or $\phi(a)$ and $\phi(b_j)$ 
are both powers of the same element (see \cite[Corollary 4.1.6]{MKS}).

In the former, $b_j=h^k  (xe^\ell x^{-1})$ for some integers $k,\ell$.
Then $b_j^{2p}= h^{2pk+\ell}$, so $b_j^{2p}$ commutes with $b_i$.
This contradicts that $b_i$ and $b_j$ generate a free group of rank two.

In the latter, set $\phi(a)=u^k$ and $\phi(b_j)=u^\ell$ for some integers $k\ (\ne 0), \ell$.
Then $\phi(a^\ell)=\phi(b_j^k)$, which implies that $a^\ell =h^t b_j^k$ for some integer $t$.
Further, $h^\ell=a^{2\ell}=h^{2t}b_j^{2k}$, so $b_j^{2k}$ is a power of $h$.
Thus $b_j^{2k}$ commutes with $b_i$, which is impossible as above.
\end{proof}

\begin{proof}[Proof of Theorem \ref{thm:torusknot}]
This follows from Propositions \ref{prop:torus1} and \ref{prop:torus2}.
\end{proof}
%%%%

A similar argument gives the next.

\begin{theorem}\label{thm:cable}
Let $M$ be a cable space of even type, and let $G=\pi_1(M)$.
Then a TRAAG $A(\Gamma)$ embeds into $G$ if and only if
$\Gamma$ is a sink star  $S_n$ for some $n\ge 1$.
\end{theorem}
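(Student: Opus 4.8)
The plan is to run the proof of Theorem~\ref{thm:torusknot} again, replacing the torus knot group by the cable space group $G=\pi_1(M)=\langle x,y,t\mid x^{2p}=t^{2p}y^q,\ [y,t]=1\rangle$ from Remark~\ref{rem:cable}, in which $e=x$ is the exceptional fiber of index $2p$ and $h=x^{2p}$ is a regular fiber, central in $G$. The crucial structural inputs are that $\langle h\rangle$ is central and that killing it gives a nontrivial free product $G/\langle\!\langle h\rangle\!\rangle\cong\mathbb{Z}/2p*\mathbb{Z}$ (the orbifold group of an annulus with one cone point of order $2p$); this plays the role of the $\mathbb{Z}/2p*\mathbb{Z}/q$ that appears for torus knots, the second factor now being infinite cyclic rather than $\mathbb{Z}/q$.

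The ``if'' direction requires nothing new: Remark~\ref{rem:cable} already realizes $A(S_n)=\langle x^p,z_1,\dots,z_n\rangle\le G$, where $z_i=[x^p,(yxy)^i]$ is reduced in the amalgam $G=\langle x\rangle *_{x^{2p}=t^{2p}y^q}\langle y,t\mid[y,t]=1\rangle$ and $(x^p)^2=h$ is central. So I would only argue the ``only if'' direction: assuming $A(\Gamma)\le G$, show that $\Gamma=S_n$.

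First I would fix the underlying graph. Because $M$ is an incompressible JSJ piece of a cable knot exterior, $G$ embeds into a knot group, so Lemmas~\ref{lem:triangle}, \ref{lem:pride} and~\ref{lem:forest} apply and $\bar\Gamma$ is a triangle-free forest. Next I would locate the centre: the assumed directed edge $[u,w\rangle$ gives a Klein bottle subgroup, and by \cite[Theorem~1.10]{HMT2} applied to the Seifert piece $M$ its head satisfies $w^x=e^p$, so $w^2$ is conjugate to $e^{2p}=h$ and hence equals the central element $h$ itself. Since $w^2$ is central in $G$, it commutes with the square of every vertex; by Lemma~\ref{lem:pride} the squares generate the RAAG $A(\bar\Gamma)$, so in $A(\bar\Gamma)$ the vertex $w$ is adjacent to every other vertex. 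A vertex adjacent to all others in a triangle-free forest forces $\bar\Gamma$ to be a single star with centre $c:=w$, since any leaf–leaf edge would create a triangle. This replaces the appeal to Katayama's theorem made in Lemma~\ref{lem:star}, which is not directly available for a cable space.

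Finally I would orient the star, copying Lemma~\ref{lem:notail} and Proposition~\ref{prop:torus2}. If some edge $[c,b_i\rangle$ had tail $c$, then its head $b_i$ would satisfy $b_i^x=e^p$ and $b_i^2=h$ central, so $b_i^2$ would commute with another leaf $b_j$, contradicting $\langle b_i,b_j\rangle\cong F_2$ (Lemma~\ref{lem:induce}); hence every directed edge at $c$ points into $c$, and at least one does since $c$ is a head. If some remaining edge $[c,b_j]$ were undirected, then $\phi(c)$ and $\phi(b_j)$ would commute in $G/\langle\!\langle h\rangle\!\rangle\cong\mathbb{Z}/2p*\mathbb{Z}$, where $\phi(c)$ lies in a conjugate of the $\mathbb{Z}/2p$ factor; applying \cite[Corollary~4.1.6]{MKS} exactly as in Proposition~\ref{prop:torus2} yields a nonzero power of $b_j$ commuting with $b_i$, again contradicting freeness. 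Thus every edge is directed into $c$ and $\Gamma=S_n$. I expect the only real friction to be bookkeeping: verifying that the central fiber, the free-product quotient, and \cite[Theorem~1.10]{HMT2} all transfer to $M$, and securing the star step without Katayama's theorem; once these are in place the argument is routine.
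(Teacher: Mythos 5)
Your proposal is correct, and its overall skeleton matches the paper's: the ``if'' direction is exactly Remark \ref{rem:cable}; the exclusion of directed edges $[c,b_i\rangle$ with tail at the centre is the argument of Lemma \ref{lem:notail} run inside $\pi_1(M)$, which is legitimate since the paper itself applies \cite[Theorem 1.10]{HMT2} to a Seifert piece with conjugating element taken in $\pi_1(M)$ (see the proof of Lemma \ref{lem:c} and case (d) of Lemma \ref{lem:triangle}); and the exclusion of undirected edges at the centre via $\phi\colon G\to \langle x \mid x^{2p}=1\rangle * \mathbb{Z}$ together with \cite[Corollary 4.1.6]{MKS} is precisely what the paper does. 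Where you genuinely diverge is the star step. The paper simply asserts that Lemma \ref{lem:star} ``holds again,'' i.e.\ it re-invokes Katayama's characterization; as you observe, Theorem \ref{thm:katayama} is stated for knot groups, and applying it to an ambient cable-knot group (say with hyperbolic companion) only yields $\bar\Gamma = mP_1+\sum T_{n_i}$, a disjoint union of isolated vertices and stars rather than a single star, so the paper is implicitly relying on the piece-level analysis in \cite{K1}. Your replacement --- the head $w$ of any directed edge satisfies $w^2=h$ after conjugating inside $\pi_1(M)$, hence $w^2$ is central; by Lemma \ref{lem:pride} the squares of the vertices generate a copy of $A(\bar\Gamma)$ in which commuting vertex generators must be adjacent, so $w$ is adjacent to every other vertex; and triangle-freeness (Lemma \ref{lem:triangle}) then forces $\bar\Gamma$ to be a single star centred at $w$ --- is self-contained, uses only results already proved in the paper, automatically rules out disconnected graphs, and identifies the centre as the head. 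The only extra obligation, which you discharge, is to note that $A(\Gamma)\le G\le G(K)$ for a cable knot $K$ so that Lemmas \ref{lem:triangle} and \ref{lem:pride} are available; given that, your route arguably patches the tersest point of the paper's own proof, at the cost of being somewhat longer than its one-line appeal to \cite{K1}.
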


\begin{proof}
The ``if'' part is given in Remark \ref{rem:cable}.
Suppose that $A(\Gamma)$ embeds into $G$.
Lemmas \ref{lem:star} and \ref{lem:notail} hold again.

As in Remark \ref{rem:cable},
let $G=\langle x, y, t \mid x^{2p}=t^{2p}y^q, [y,t]=1 \rangle$, where
$x$ represents the exceptional fiber of index $2p$.
Set $h=x^{2p}$, which is a regular fiber.
By using the natural projection
\[
\phi\colon G \to G/\langle\!\langle h \rangle\!\rangle=\langle x \mid x^{2p}=1\rangle * \langle y, t \mid [y,t]=1, t^{2p}y^q=1\rangle= \langle x \mid x^{2p}=1\rangle* \mathbb{Z},
\]
the remaining argument goes  on as in the proof of Proposition \ref{prop:torus2}.
\end{proof}

%%%%%%%%%
\section{Group theoretic results}

In this section, we prove some technical results for the remaining sections.

\subsection{Amalgamated free product}

For a knot exterior $E(K)$, choose an essential torus $T$ from the torus decomposition.
This torus decomposes $E(K)$ into $M$ and $N$.
(They are not necessarily pieces arisen from the torus decomposition.)
Hence $G(K)=\pi_1(M)*_{\pi_1(T)} \pi_1(N)$.

Keeping this in mind, consider   an amalgamated free product $G=A*_H B$.
Select the right coset representative systems $\{c_i\}$ for $A$ mod $H$ and $B$ mod $H$ (see \cite{MKS}).

\begin{lemma}\label{lem:g1}
For $a\in A-H$, if $a$ is conjugate into $H$, then 
$a^w \in H$ for some $w\in A-H$.
\end{lemma}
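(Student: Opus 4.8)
The plan is to work entirely with the normal form theorem for the amalgamated free product $G=A*_H B$ and to induct on the syllable length of a conjugating element. Write the hypothesis as $g^{-1}ag=h\in H$ for some $g\in G$, and let $|g|$ denote the number of syllables in a reduced expression $g=c_1c_2\cdots c_m$, where the $c_i$ lie alternately in $A-H$ and $B-H$. The goal reduces to the following claim: if $a\in A$ is conjugate in $G$ into $H$, then it is already conjugate into $H$ by an element of $A$. Granting this, I would finish as follows: the claim produces $w\in A$ with $a^w\in H$, and since $a\notin H$ this forces $w\notin H$ (otherwise $a=w\,(a^w)\,w^{-1}\in H$), so $w\in A-H$, as required.

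First I would dispose of the degenerate cases: if $g\in H$ then $a=g\,h\,g^{-1}\in H$, contradicting $a\in A-H$, so $|g|\ge 1$; and if $g\in A-H$ then $w=g$ already works. The inductive step rests on a single computation. Suppose the leftmost syllable $c_1$ lies in $B-H$. Then in
\[
g^{-1}ag=c_m^{-1}\cdots c_2^{-1}\,(c_1^{-1}a\,c_1)\,c_2\cdots c_m
\]
the innermost block $c_1^{-1}a\,c_1$ is a reduced word of length three, its syllables lying in $B-H$, $A-H$, $B-H$; because the $c_i$ alternate, each successive conjugation by $c_i^{\pm 1}$ prepends and appends a syllable from the opposite factor, so no cancellation occurs and the whole expression is reduced of length $2m+1\ge 3$. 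By uniqueness of reduced length this element cannot lie in a single factor, and in particular not in $H$, contradicting $g^{-1}ag=h$. Hence $c_1\in A-H$.

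Knowing $c_1\in A-H$, I would set $a'=c_1^{-1}a\,c_1=a^{c_1}\in A$ and write $g=c_1g'$ with $|g'|=m-1$. Then $(a')^{g'}=h\in H$, so by the inductive hypothesis (applied over all elements of $A$, with the base case $g'\in H$ giving $a'\in H$ and $w'=1$) there is $w'\in A$ with $(a')^{w'}\in H$; taking $w=c_1w'\in A$ gives $a^w=(a')^{w'}\in H$, completing the induction and hence the claim. The one point requiring care — and the main obstacle — is the no-cancellation assertion in the displayed computation: it must be justified directly from the normal form theorem for $A*_H B$ (using the chosen coset representative systems $\{c_i\}$), since it is precisely the failure of any such sandwich word to collapse into $H$ that forces the conjugator into the factor $A$.
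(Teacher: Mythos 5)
Your proof is correct, and it reaches the lemma by a more self-contained route than the paper. The paper writes $w=kc_1c_2\cdots c_s$ in coset-representative reduced form and then quotes the proof of the conjugacy theorem for amalgamated products \cite[Theorem 4.6(i)]{MKS} to assert that all the intermediate conjugates $h^{c_s^{-1}\cdots c_i^{-1}}$ lie in $H$; this gives $a^{kc_1}\in H$ at once, and a one-step length argument (if $c_1\in B-H$, then $c_1^{-1}(k^{-1}ak)c_1$ has representative length $>1$) forces $c_1\in A-H$, so $w'=kc_1$ works. You avoid the appeal to MKS entirely: your key observation is that if the leading syllable $c_1$ lay in $B-H$, then the \emph{whole} word $g^{-1}ag=c_m^{-1}\cdots c_1^{-1}\,a\,c_1\cdots c_m$ would be an alternating product of $2m+1\ge 3$ syllables, none in $H$, hence not in $H$ by the normal form theorem, contradicting $g^{-1}ag=h$; so $c_1\in A-H$, and induction on syllable length finishes. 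Both proofs run on the same engine (analysis of the first syllable of the conjugator via normal forms), but yours trades the external reference for an induction, which makes it more elementary and verifiable from first principles. It is worth noting that your recursion actually terminates at depth one: once $c_1\in A-H$ is known, the next syllable $c_2$ lies in $B-H$, so your own key computation applied to $(a^{c_1},c_2\cdots c_m)$ forces $a^{c_1}\in H$ --- which is precisely the stronger fact ($a^{kc_1}\in H$) that the paper extracts from MKS. Your closing reduction (if $w\in H$ then $a\in H$, so necessarily $w\in A-H$) matches the paper's opening dichotomy, and the no-cancellation assertion you flag as the main point of care is indeed exactly the standard statement of the normal form theorem for $A*_HB$, so there is no gap.
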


\begin{proof}
By the assumption, $a^w\in H$ for some $w\in G$.
If $w\in A-H$, then we are done.
If $w\in H$, then $a\in H$, a contradiction.
Hence we assume $w\not \in H$.

Set $h=a^w$, and let $w=kc_1c_2\dots c_s$ be the reduced form of $w$, where
$k\in H$ (possibly, $k=1$) and $c_i$ are representatives.
In particular, $s\ge 1$, $c_i\in A-H$ or $B-H$, and $c_i$ and $c_{i+1}$ do not belong to the same factor.

Then the proof of \cite[Theorem 4.6(i)]{MKS} claims that
in the sequence
\[
h, h^{c_s^{-1}}, h^{c_s^{-1}c_{s-1}^{-1}},\dots, h^{c_s^{-1}c_{s-1}^{-1}\dots c_3^{-1}c_2^{-1}},
h^{c_s^{-1}c_{s-1}^{-1}\dots c_2^{-1}c_1^{-1}k^{-1}}=a,
\]
any element except the last belongs to $H$.

%Suppose $s\ge 2$.
Let $h_2=h^{c_s^{-1}c_{s-1}^{-1}\dots c_2^{-1}}\in H$.  
(When $s=1$, set $h_2=h$.)
Then $h_2=a^{kc_1}$.
If $c_1\in B-H$, then
\[
h_2=c_1^{-1}k^{-1}\cdot a\cdot kc_1
\]
has representative length $>1$, because $a\in A-H$ and $c_1\in B-H$.
Thus $c_1\in A-H$.
Since $kc_1\in A-H$, we are done.
%If $s=1$, then $w=kc_1$.
%If $c_1\in B-H$, then 
%\[
%h=c_1^{-1}k^{-1}\cdot a\cdot kc_1
%\]
%has representative length $>1$, again.  Hence $c_1\in A-H$, so $kc_1\in A-H$.
\end{proof}

\begin{lemma}\label{lem:g2}
Let $a\in A-H$.  Suppose that $a$ is not conjugate into $H$.
If $a^w \in A-H$ for $w\in G$, then $w\in A$.
\end{lemma}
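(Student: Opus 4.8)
The plan is to argue via the syllable length in the amalgamated free product $G=A*_H B$, in the spirit of the proof of Lemma \ref{lem:g1}. Recall from the normal form theory of \cite{MKS} that every element of $G$ has a well-defined syllable length: the elements of length $0$ are exactly those of $H$, and the elements of length $1$ are exactly those of $(A-H)\cup(B-H)$. Thus $a\in A-H$ and the hypothesised conjugate $a^w\in A-H$ both have length $1$. First I would write $w$ in reduced form and absorb the $H$-part into the first syllable (using $H\subseteq A$ and $H\subseteq B$), so that $w=w_1w_2\cdots w_m$ with each $w_j$ in $A-H$ or $B-H$ and consecutive syllables in different factors. If $w\in H$ then $w\in A$ and there is nothing to prove; likewise, if $m=1$ and $w_1\in A-H$ then $w\in A$. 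So I may assume $w\notin A$, which forces either $m=1$ with $w_1\in B-H$, or $m\ge 2$.

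Next I would compute the syllable length of $a^w=w^{-1}aw=w_m^{-1}\cdots w_1^{-1}\,a\,w_1\cdots w_m$ by examining the innermost conjugation $w_1^{-1}aw_1$. If $w_1\in B-H$, then no combination occurs at $a$ (its neighbours $w_1^{\pm1}$ lie in the other factor), and since $w$ is reduced the whole word is reduced of length $2m+1\ge 3$. If instead $w_1\in A-H$, then $w_1^{-1}aw_1\in A$; here the hypothesis that $a$ is \emph{not} conjugate into $H$ is decisive, since it forbids $w_1^{-1}aw_1\in H$ (otherwise $a$ would be conjugate into $H$), so $a_1:=w_1^{-1}aw_1\in A-H$. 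In this case $m\ge 2$ and $w_2\in B-H$, and $a^w=w_m^{-1}\cdots w_2^{-1}a_1w_2\cdots w_m$ is reduced of length $2m-1\ge 3$.

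In either case $a^w$ has syllable length at least $3$, contradicting the fact that $a^w\in A-H$ has length $1$. Hence $w\in A$, as claimed.

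The main technical point to get right is the bookkeeping of the reduced form together with the identification of the exact step at which the hypothesis ``$a$ is not conjugate into $H$'' is used: it is precisely what prevents the innermost conjugation from collapsing the length to $0$ when $w_1\in A-H$, and without it the statement genuinely fails. I would cite the normal form theorem of \cite{MKS} (as in Lemma \ref{lem:g1}) for the well-definedness of syllable length and for the fact that the elements of length at most $1$ are exactly those of $A\cup B$; the verification that no further cancellation occurs in the displayed words is then routine once one tracks which factor each syllable lies in.
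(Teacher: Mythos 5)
Your proof is correct and follows essentially the same route as the paper's: both write $w$ in reduced form, split into the cases where the innermost syllable lies in $B-H$ or in $A-H$ (invoking the hypothesis that $a$ is not conjugate into $H$ precisely to keep the innermost conjugate in $A-H$ in the second case), and then derive a contradiction by comparing syllable lengths of the two expressions for $a^w$. The only difference is cosmetic bookkeeping (you absorb the $H$-prefix into the first syllable, while the paper carries it as a separate factor $k$), so there is nothing to change.
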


\begin{proof}
Let $w=kc_1c_2\dots c_s$ be the reduced form of $w$.
Suppose $w\not \in A$ for a contradiction.
Then either $c_1\in B-H$, or $c_1\in A-H$ and $c_2\in B-H$.

Suppose $c_1\in B-H$.  Then
\begin{equation}\label{eq:1}
a^w=c_s^{-1}\dots c_2^{-1} c_1^{-1} k^{-1}\cdot a  \cdot kc_1c_2\dots c_s.
\end{equation}
Since $k^{-1}ak \in A-H$, both sides have distinct representative lengths, a contradiction.

Suppose that $c_1\in A-H$ and $c_2\in B-H$.
Then we have $c_1^{-1}k^{-1}akc_1\in A-H$ in the right hand side of (\ref{eq:1}).
Hence both side have distinct representative lengths again.
\end{proof}

%%%
\subsection{Generalized torsion pair}

We consider the situation where a knot group $G(K)$ contains
a generalized torsion pair $(g,c)$.
Equivalently, we have a subgroup $\langle g, c \mid gcg=c\rangle$ (\cite{HMT1}).
By Lemma \ref{lem:kb}, the knot exterior $E(K)$ contains
a Seifert fibered piece  $M$ of even type, which is either
a torus knot exterior  or a cable space.
We exclude the case where $M=E(K)$, because the torus knots are done in Section \ref{sec:torusknot}.
After a conjugation,  we may assume that $g, c\in \pi_1(M)$ \cite{HMT2}.
That is, the pair $(g,c)$ is a generalized torsion pair in $\pi_1(M)$.

In this subsection, we prove that neither $g$ nor $c$ lies in $\pi_1(T)$ for any boundary torus 
component $T$ of $M$.

\begin{lemma}\label{lem:gtorsion}
Let $T$ be a boundary component of a Seifert fibered piece $M$ of the knot exterior $E(K)$.
If $(g,c)$ is a generalized torsion pair in $\pi_1(M)$, 
then $g\not \in \pi_1(T)$.
\end{lemma}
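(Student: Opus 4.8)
The plan is to argue by contradiction: assume $g\in\pi_1(T)$, pin down $g^2$ as the \emph{regular fiber} of $M$, and observe that a regular fiber cannot be a square inside a peripheral $\mathbb{Z}^2$.

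First I would record the basic structure and rewrite the relation. Since $M$ is of even type, it carries a unique exceptional fiber $e$ of index $2p$, and the regular fiber $h=e^{2p}$ is central in $\pi_1(M)$. The defining relation $gcg=c$ of the pair rewrites as $g^c=g^{-1}$, so $g$ is a generalized torsion element of order two, reversed by $c$.

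Next I would locate $g$ relative to the fibration. By Theorem 1.10 of \cite{HMT2} (the same input used in the proof of Lemma \ref{lem:triangle}), the order-two generalized torsion element $g$ is conjugate to a power of the exceptional fiber; in fact $g^y=e^p$ for some $y\in\pi_1(M)$. Squaring gives $g^{2y}=e^{2p}=h$, and since $h$ is central this forces $g^2=h$. In particular $g\neq 1$, because $h$ has infinite order.

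Finally I would derive the contradiction from the geometry of the boundary. Suppose $g\in\pi_1(T)\cong\mathbb{Z}^2$. Then $h=g^2\in\pi_1(T)$. But the Seifert fibration of $M$ restricts to a circle fibration of the boundary torus $T$, so the central regular fiber $h$ lies in $\pi_1(T)$ and is represented there by the fiber of the fibration, an essential embedded simple closed curve; hence $h$ is a primitive element of $\pi_1(T)\cong\mathbb{Z}^2$. On the other hand, in additive notation a square $g^2$ equals $2g$, whose coordinates have even greatest common divisor, so $g^2$ is never primitive. This contradiction shows $g\notin\pi_1(T)$. The main obstacle is the middle step: identifying $g^2$ with the central fiber $h$ itself rather than with a mere power of it, which is exactly what lets the primitivity of the fiber on $T$ close the argument; this rests on the precise conjugacy $g^y=e^p$ from \cite{HMT2}, after which the primitivity of a regular fiber on a boundary torus is standard Seifert-fibered-space theory and the numerical contradiction is immediate.
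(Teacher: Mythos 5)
Your argument breaks down at its central step, where you invoke \cite[Theorem 1.10]{HMT2} to conclude that $g^y=e^p$ for some $y\in\pi_1(M)$. That theorem (and Lemma \ref{lem:kb} of this paper) applies to the \emph{reversing} element of the pair, not to the generalized torsion element itself: for a pair $(g,c)$ with $g^c=g^{-1}$ (equivalently $gcg=c$), it is $c$ that is conjugate to $e^p$, not $g$. You have swapped the two roles. In fact $g$ can never be conjugate to a nonzero power of $e$: the relation $g^c=g^{-1}$ gives $2[g]=0$ in $H_1(M)$, which is torsion-free ($\mathbb{Z}$ for a torus knot exterior, $\mathbb{Z}^2$ for a cable space), so $[g]=0$; on the other hand $[e^p]\ne 0$ (for the torus knot group $\langle x,y\mid x^{2p}=y^q\rangle$ one has $[e^p]=pq$ under $H_1(M)\cong\mathbb{Z}$). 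The paper's own embedding in Proposition \ref{prop:torus1} illustrates the asymmetry: there the generalized torsion element is the commutator $z_i=[x^p,(yxy)^i]$, which is null-homologous, while the reverser is $x^p=e^p$. Consequently your steps identifying $g^2$ with the regular fiber $h$, and the primitivity contradiction on $T$, all collapse; what your argument proves (after relabelling) is a statement about $c$, which is the content of the companion Lemma \ref{lem:c}, not of Lemma \ref{lem:gtorsion}.

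Since the conjugacy-to-fiber input is simply unavailable for $g$, a different mechanism is needed, and this is what the paper does. Assuming $g\in\pi_1(T)$, the relation $g^c=g^{-1}$ forces $[g]=0\in H_1(M)$. In the cable space case the peripheral subgroup injects into $H_1(M)$, so $[g]=0$ already forces $g=1$, contradicting $g\ne 1$. In the torus knot exterior case homology only forces $g=\lambda^j$, a power of the null-homologous longitude, and a second invariant is required: $\mathrm{scl}(\lambda^j)=j\cdot\mathrm{scl}(\lambda)\ge j/2>0$, whereas any generalized torsion element of order two has vanishing stable commutator length \cite{IMT1}. So the repair is not cosmetic: ruling out $g\in\pi_1(T)$ genuinely requires homology together with scl, rather than the fiber-conjugacy and primitivity argument you propose.
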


\begin{proof}
Suppose $g\in \pi_1(T)$ for a contradiction.
We divide the argument into two cases.

\medskip
\textbf{Case 1.} $M$ is a torus knot exterior of type $(2p,q)$.

On $T=\partial M$, we have a standard meridian-longitude pair $(m,\lambda)$, where
$\lambda$ is null-homologous in $M$.
Let $g=m^i \lambda^j \in \pi_1(T)$.
Then  $g^{c}=g^{-1}$ implies $[g]=0 \in H_1(M)=\mathbb{Z}$.
Since $[g]=i[m]$, we have $i=0$.

We may assume $j> 0$.  Here, we make use of stable commutator length.
For $g=\lambda^j$,  $\mathrm{scl}(\lambda^j)=j\cdot \mathrm{scl}(\lambda) \ge j/2$ (see \cite{C, IMT1, IMT2}).
This contradicts that any generalized torsion element of order two has scl $0$ (\cite{IMT1}).

\medskip
\textbf{Case 2.} $M$ is a cable space of type $(2p,q)$.

For a solid torus $J=S^1\times D^2$,  take a concentric smaller solid torus
$J'=S^1\times D_0$ with $D_0\subset D^2$, and a $(2p,q)$ curve $k$ on $\partial J'$, running $2p$ times along $S^1$.
Then $M$ is homeomorphic to the exterior of $k$ in $J$.
Let $m=\{*\}\times \partial D^2$ and $\ell=S^1\times \{*\}$ be a meridian-longitude pair on $\partial J$,
and let $e$ be the core of $J$ (and $J'$).
Then $\pi_1(M)=\langle m,\ell, e \mid  e^{2p}=m^q \ell^{2p}, [m,\ell]=1 \rangle$.

First, set $T=\partial J$.  Then $\pi_1(T)=\langle m,\ell \mid [m,\ell]=1 \rangle$,
and we can put $g=m^i\ell^j$ for some integers $i$ and $j$.
Since $g^{c}=g^{-1}$, $[g]=0 \in H_1(M)=\langle [\mu], [\ell]\rangle=\mathbb{Z}^2$,
where $\mu$ is a meridian of the curve $k$.
However, $[g]=i[m]+j[\ell]=2pi[\mu]+j[\ell]=0$ implies $i=j=0$, so $g=1$, a contradiction.
%%%%

Second, let $T$ be the boundary of the tubular neighborhood of $k$.
Let $h$ be a regular fiber of $M$.
Then $\pi_1(T)=\langle \mu, h \mid [\mu,h]=1\rangle$, and
set $g=\mu^i h^j$.
As before, $g^c=g^{-1}$ implies  $[g]=0 \in H_1(M)$.
However,
\begin{align*}
[g]&=i[\mu]+j[h]=i[\mu]+j (2p[\ell]+q[m])\\
&=i[\mu]+j(2p[\ell]+2pq[\mu])=(i+2pqj)[\mu]+2pj[\ell].
\end{align*}
Hence $i=j=0$, so $g=1$ again.
\end{proof}

\begin{remark}
If $(g,c)$ is a generalized torsion pair, then so is $(g^n,c)$ for any $n\ne 0$.
For, $g^c=g^{-1}$ implies $(g^n)^c=(g^c)^n=(g^{-1})^n=(g^n)^{-1}$.
(Of course, we need $g^n\ne 1$, which follows from the fact that $G(K)$ is torsion-free.)
Thus Lemma \ref{lem:gtorsion} shows that $g^n \not \in \pi_1(T)$ for any $n\ne 0$.
\end{remark}

\begin{lemma}\label{lem:c}
Let $T$ be a boundary component of a Seifert fibered piece $M$ of the knot exterior $E(K)$.
If $(g,c)$ is a generalized torsion pair in $\pi_1(M)$, 
then $c\not \in \pi_1(T)$.
\end{lemma}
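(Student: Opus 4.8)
The plan is to reduce the whole statement to a single clean fact: the head $c$ of the Klein relation must square to the regular fiber $h$ of $M$, and $h$ meets every boundary torus of $M$ in a \emph{primitive} class. First I would pin down $c^{2}$. Since $(g,c)$ is a generalized torsion pair, $\langle g,c\mid gcg=c\rangle$ is a Klein bottle group in which $c$ is the head of the directed edge. Exactly as in the proof of Lemma \ref{lem:triangle}, \cite[Theorem 1.10]{HMT2} (equivalently Lemma \ref{lem:kb}) yields $c^{y}=e^{p}$ for some $y\in\pi_1(M)$, where $e$ is the exceptional fiber of index $2p$. Hence $c^{2}$ is conjugate to $e^{2p}=h$, and because $h$ is central in $\pi_1(M)$ this upgrades to the exact equality $c^{2}=h$.

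Next I would assume for contradiction that $c\in\pi_1(T)$ for a boundary torus $T$ of $M$. Then the rank-two abelian group $\pi_1(T)\cong\mathbb{Z}^{2}$ contains both $c$ and $c^{2}=h$. On the other hand, $T$ carries no exceptional fiber, so the Seifert fibration restricts to $T$ as a genuine circle fibration and the regular fiber $h$ is represented by a single embedded simple closed curve; thus $h$ is a primitive element of $\pi_1(T)$. Writing $\pi_1(T)$ additively, the relation $c^{2}=h$ becomes $2[c]=[h]$, forcing $[h]\in 2\,\pi_1(T)$, which contradicts primitivity. Therefore $c\notin\pi_1(T)$.

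The step carrying the weight is the pairing of the identity $c^{2}=h$ with the primitivity of $h$ on $\partial M$; once both are secured the contradiction is immediate and, pleasantly, uniform, so unlike Lemma \ref{lem:gtorsion} there is no need to separate the torus-knot-exterior and cable-space cases. If one prefers not to invoke primitivity of the fiber abstractly, the same conclusion follows from the explicit descriptions already used in the two cases of Lemma \ref{lem:gtorsion}: on the fibered torus of a cable space $h$ is literally a basis element of $\pi_1(T)=\langle\mu,h\rangle$, on the outer torus $[h]=q[m]+2p[\ell]$ with $\gcd(2p,q)=1$, and on $\partial M$ for a torus knot exterior $h$ is the primitive cabling slope; in every case $[h]$ is primitive, again ruling out $c^{2}=h$ with $c\in\pi_1(T)$.
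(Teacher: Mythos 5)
Your proof is correct, and its first half is the same as the paper's: both extract $c^{y}=e^{p}$ from \cite[Theorem 1.10]{HMT2} (via Lemma \ref{lem:kb}) and use centrality of the regular fiber to promote the conjugacy $(c^{2})^{y}=e^{2p}=h$ to the exact identity $c^{2}=h$ (in the paper this appears as $(c^{2}h^{-1})^{x}=1$, hence $c^{2}h^{-1}=1$). Where you genuinely diverge is the finish. The paper assumes $c\in\pi_1(T)$, writes $c$ in explicit peripheral coordinates, and derives a contradiction separately in three cases (the boundary of a torus knot exterior, and each boundary component of a cable space), using $H_1(M)$ and torsion-freeness; in each case the contradiction is that an odd exponent would have to be even. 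You replace this case analysis with the single observation that the regular fiber is an embedded essential simple closed curve on each boundary torus, hence primitive in $\pi_1(T)\cong\mathbb{Z}^{2}$, so it cannot equal $2[c]$. This is a genuine unification rather than a different theorem: the paper's three computations are exactly coordinate verifications of that primitivity, with $h$ having coordinates $(2pq,1)$ on the torus knot boundary, $(q,2p)$ with $q$ odd and $\gcd(2p,q)=1$ on the outer cable torus, and $(0,1)$ on the inner one. What the paper's route buys is self-containedness --- everything is checked directly from the presentations already set up in Lemma \ref{lem:gtorsion}, with no appeal to a general fact about Seifert fibrations; what yours buys is brevity, no case division, and the clarification that the statement holds for any Seifert fibered piece whose regular fiber is central, which is worth recording.
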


\begin{proof}
By Lemma \ref{lem:kb},  $c^x=e^p$ for some $x\in \pi_1(M)$ and the (unique) exceptional fiber $e$ with index $2p$.
In particular, $(c^x)^2=e^{2p}$ gives a regular fiber $h$.
%Thus $c^x$ is a root of $h$.

\medskip
\textbf{Case 1.} $M$ is a torus knot exterior of type $(2p,q)$.

On $T=\partial M$, choose a standard meridian-longitude pair $(m,\lambda)$, where
$\lambda$ is null-homologous in $M$.
Then $h=m^{2pq}\lambda$.
Since $[e]=q\in H_1(M)=\mathbb{Z}$, we have $[c]=[c^x]=[e^p]=pq$.

Suppose $c\in \pi_1(T)$.  Then $c=m^i\lambda^j$ for some integers $i$ and $j$, and $[c]=i\in H_1(M)$.
Hence $i=pq$.
For $c^2=m^{2pq}\lambda^{2j}$,
consider $c^2h^{-1}=\lambda^{2j-1}\in \pi_1(T)$.
In $\pi_1(M)$, $(c^2h^{-1})^x=(c^x)^2 h^{-1}=1$, so $c^2h^{-1}=1$.
Then $\lambda^{2j-1}=1$, which contradicts that $\pi_1(M)$ is torsion-free.

\medskip
\textbf{Case 2.} $M$ is a cable space of type $(2p,q)$.

As in the proof of Lemma \ref{lem:gtorsion},
we have a presentation $\pi_1(M)=\langle m,\ell, e \mid  e^{2p}=m^q \ell^{2p}, [m,\ell]=1 \rangle$, where
$h=m^{q}\ell^{2p}$.

First, set $T=\partial J$.
Assume $c\in \pi_1(T)$, and set $c=m^i \ell^j$.
Then $[c]=i[m]+j[\ell]=2pi[\mu]+j[\ell]\in H_1(M)=\langle [\mu],[\ell] \rangle$.
Since $c^2=m^{2i}\ell^{2j}$, we have $c^2h^{-1}=m^{2i-q}\ell^{2j-2p}$.
Hence
\[[c^2h^{-1}]=(2i-q)[m]+(2j-2p)[\ell]=2p(2i-q)[\mu]+(2j-2p)[\ell] \in H_1(M).
\]
As before, 
$(c^2h^{-1})^x=(c^x)^2 h^{-1}=1$ in $\pi_1(M)$, so $c^2h^{-1}=1$.
Then $q=2i$ and $p=j$.
However, this is impossible, because $q$ is odd.

Second, let $T$ be the boundary of the neighborhood of the curve $k$.
Then $\pi_1(T)=\langle \mu, h \mid [\mu, h]=1\rangle$.
Assume $c\in \pi_1(T)$.  Then $c=\mu^i h^{j}$, so $c^2=\mu^{2i}h^{2j}$.
Again, consider $c^2h^{-1}=\mu^{2i}h^{2j-1}$.
Then
\begin{align*}
[c^2h^{-1}]&=2i[\mu]+(2j-1)[h]=2i[\mu]+(2j-1)(2p[\ell]+q[m])\\
&=(2i+2p(2j-1)q)[\mu]+2p(2j-1)[\ell]\ne 0\in H_1(M),
\end{align*}
because $2p(2j-1)\ne 0$.
This contradicts $c^2h^{-1}=1$.
\end{proof}

%%%%%
%For later use, we confirm the next.

%\begin{lemma}
%Let $K$ be a torus knot of type $(2p,q)$.
%Let $A(S_n)=\langle x^p,z_i\rangle$ be the subgroup of $G(K)$ given in Proposition \ref{prop:torus1}. Then $A(S_n)\cap \pi_1(T)=\langle x^{2p} \rangle$, where $T=\partial E(K)$ and $x^{2p}$ is a regular fiber of $E(K)$.
%\end{lemma}

%\begin{proof}
%Since $x^{2p}\in A(S_n)\cap \pi_1(T)$, $\langle x^{2p} \rangle\subset A(S_n)\cap \pi_1(T)$ is clear.
%Conversely,  take $w\in A(S_n)$.
%Since $A(S_n)=\langle x^p,z_i \mid z_ix^pz_i=x^p\rangle$, we can write
%$w=x^{pk}(z_{i_1}^{r_1}z_{i_2}^{r_2}\dots z_{i_m}^{r_m})$ for some integers $k, r_i\ne 0$.
%Assume that $w\in \pi_1(T)$.

%Let $\mu$ and $\lambda$ be the meridian and the longitude of $K$.
%Recall that $G(K)=\langle x, y \mid x^{2p}=y^q\rangle$, where $h=x^{2p}$ gives a regular fiber.
%Then $\mu=x^{a}y^{-b}$ with $aq-2pb=1$, $|a|<2p$ and $|b|<q$,
%and $\lambda=h\mu^{-2pq}=x^{2p}(x^{a}y^{-b})^{-2pq}$ (\cite{BZ}).

%Thus we have $w=\mu^i \lambda^j$ for some integers $i,j$.
%Under the abelianization, $w\mapsto pqk$, so $i=pqk$.

%(1) Assume that $k$ is even, $k=2k'$.
%Since $x^{2p}=h$ and $h=\mu^{2pq}\lambda$, 
%\[
%h^{k'}(z_{i_1}^{r_1}z_{i_2}^{r_2}\dots z_{i_m}^{r_m})=\mu^{2pqk'}\lambda^j.
%\]
%Then  $z_{i_1}^{r_1}z_{i_2}^{r_2}\dots z_{i_m}^{r_m}=\lambda^{j-k'}$.
%If $j=k'$, then $w=x^{2pk'}\in \langle x^{2p} \rangle$, we are done.
%Thus we assume $j\ne k'$ for a contradiction.
%This means that a product of $z_i^{\pm 1}$'s yields a power of longitude.
%\end{proof}

%%%%%
\section{Directed edges in a mixed graph}

By Corollary \ref{cor:h} and Theorem \ref{thm:torusknot},
the remaining case is when $E(K)$ admits a Seifert fibered piece $M\ne E(K)$ of even type.
We suppose this situation throughout this section.

Let $\Gamma$ be a mixed graph with directed edge $[a,b\rangle$.
Suppose that $A(\Gamma)$ embeds into a knot group $G(K)$.
By Lemma \ref{lem:kb}, we may assume that $\langle a, b\rangle \le \pi_1(M)$.
Thus $(a,b)$ gives a generalized torsion pair in $\pi_1(M)$, that is, $a^b=a^{-1}$, equivalently $aba=b$.

There are two possibilities for $M$.
If $M$ is a torus knot exterior, then $T=\partial M$ splits $E(K)$ into $M$ and $N$, say.
That is, $E(K)=M\cup_T N$, so $G(K)=\pi_1(M)*_{\pi_1(T)} \pi_1(N)$.
If $M$ is cable space, then there are two cases.
If $M$ contains $\partial E(K)$, then choose $T$ as another boundary component.
Otherwise, we can choose either boundary component as $T$.
Thus $T$ splits $E(K)$ into $M'$, which contains $M$ (possibly, $M'=M$),  and $N$.
Then $G(K)=\pi_1(M')*_{\pi_1(T)} \pi_1(N)$.
Hence the torus knot exterior case can also be regarded as the latter when $M'=M$.
Since $\pi_1(M)\le \pi_1(M')$, the pair $(a,b)$ is also a generalized torsion pair in $\pi_1(M')$.

\begin{lemma}\label{lem:anott}
Neither $a$ nor $b$ is conjugate into $\pi_1(T)$ in $G(K)$.
\end{lemma}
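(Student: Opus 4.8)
The plan is to exploit the amalgamated free product decomposition $G(K)=\pi_1(M')*_{\pi_1(T)}\pi_1(N)$ recorded just before the statement, writing $A=\pi_1(M')$, $H=\pi_1(T)$ and $B=\pi_1(N)$, and to feed the combinatorics of conjugacy in amalgamated products (Lemmas \ref{lem:g1} and \ref{lem:g2}) into the homological obstructions of Lemmas \ref{lem:gtorsion} and \ref{lem:c}. First I would note that, since $T$ is a boundary component of the Seifert fibered piece $M$ and $(a,b)$ is a generalized torsion pair in $\pi_1(M)$, Lemmas \ref{lem:gtorsion} and \ref{lem:c} give $a\notin\pi_1(T)$ and $b\notin\pi_1(T)$; as $a,b\in\pi_1(M)\le\pi_1(M')=A$, this places both $a$ and $b$ in $A-H$. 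I will treat $a$ first and obtain the statement for $b$ by the identical argument with Lemma \ref{lem:c} in place of Lemma \ref{lem:gtorsion}.

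Suppose for contradiction that $a$ is conjugate into $H=\pi_1(T)$ in $G(K)$. By Lemma \ref{lem:g1} there is $w\in A-H=\pi_1(M')-\pi_1(T)$ with $a^w\in\pi_1(T)$. The goal is to improve the conjugator $w$ so that it lies in $\pi_1(M)$ itself; once this is done, $(a^w,b^w)$ is again a generalized torsion pair in $\pi_1(M)$ (conjugation preserves the relation $a^b=a^{-1}$, and $a^w\ne 1$ by torsion-freeness), while $a^w\in\pi_1(T)$ with $T$ a boundary component of $M$, directly contradicting Lemma \ref{lem:gtorsion}. If $M'=M$ then $w\in\pi_1(M)$ already and we are finished.

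The main work, and the step I expect to be the genuine obstacle, is the case $M'\supsetneq M$, in which the conjugator only lives in the larger group $\pi_1(M')$. Here $M$ is a cable space whose second boundary torus $T_0$ (the one distinct from $T$) is essential, giving a finer amalgamation $\pi_1(M')=\pi_1(M)*_{\pi_1(T_0)}\pi_1(N')$, where $N'$ is the piece glued to $M$ along $T_0$. By Lemma \ref{lem:gtorsion} applied to the boundary $T_0$ we again have $a\in\pi_1(M)-\pi_1(T_0)$. I would then split on whether $a$ is conjugate into $\pi_1(T_0)$ inside $\pi_1(M')$: if it is, Lemma \ref{lem:g1} applied to this finer amalgamation produces $w'\in\pi_1(M)-\pi_1(T_0)$ with $a^{w'}\in\pi_1(T_0)$, contradicting Lemma \ref{lem:gtorsion}; if it is not, then $a^w\in\pi_1(T)\subseteq\pi_1(M)$ cannot lie in $\pi_1(T_0)$ (otherwise $a$ would be conjugate into $\pi_1(T_0)$), so $a^w\in\pi_1(M)-\pi_1(T_0)$ and Lemma \ref{lem:g2} forces $w\in\pi_1(M)$, reducing to the situation already handled. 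In every branch we arrive at a generalized torsion pair in $\pi_1(M)$ one of whose members lies in a boundary subgroup of $M$, contradicting Lemma \ref{lem:gtorsion} (respectively Lemma \ref{lem:c} when the argument is run for $b$). The delicate point throughout is bookkeeping of which boundary torus, $T$ or $T_0$, the conjugated element lands in, and guaranteeing that the conjugator can always be pushed back into $\pi_1(M)$, so that the local homological computations underlying Lemmas \ref{lem:gtorsion} and \ref{lem:c} become applicable.
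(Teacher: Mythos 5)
Your proposal is correct and follows essentially the same route as the paper: both reduce to the splitting $G(K)=\pi_1(M')*_{\pi_1(T)}\pi_1(N)$, use Lemma \ref{lem:g1} to place the conjugator in $\pi_1(M')$, and in the case $M'\ne M$ pass to the finer splitting $\pi_1(M')=\pi_1(M)*_{\pi_1(T_0)}\pi_1(N')$, splitting on whether $a$ (resp.\ $b$) is conjugate into $\pi_1(T_0)$ there, so that Lemma \ref{lem:g1} or \ref{lem:g2} pushes the conjugator into $\pi_1(M)$ and the conjugated generalized torsion pair violates Lemma \ref{lem:gtorsion} (resp.\ Lemma \ref{lem:c}). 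Your write-up even makes explicit a small point the paper leaves tacit, namely why $a^w\notin\pi_1(T_0)$ before invoking Lemma \ref{lem:g2}.
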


\begin{proof}
Consider the element $a$.
By Lemma \ref{lem:gtorsion}, $a\not \in \pi_1(T)$.
Assume that $a^x\in \pi_1(T)$ for some $x\in G(K)$.
By Lemma \ref{lem:g1}, we can assume that $x\in \pi_1(M')$.

Suppose that $M$ is a torus knot exterior.  Then $M'=M$.
By taking  a conjugation with $x$, $aba=b$ yields $a^x b^x a^x=b^x$.
Thus a new pair $(a^x, b^x)$ is another generalized torsion pair in $\pi_1(M)$.
This contradicts Lemma \ref{lem:gtorsion}.

Suppose that $M$ is a cable space.
If $M'=M$, then we are done as above.
Otherwise, let $T'$ be the other boundary component of $M$.
Then $M'=M\cup_{T'} L$, so $\pi_1(M')=\pi_1(M)*_{\pi_1(T')} \pi_1(L)$.
We remark that $a\not \in \pi_1(T')$ by Lemma \ref{lem:gtorsion}.

If $a$ is not conjugate into $\pi_1(T')$ in $\pi_1(M')$, then Lemma \ref{lem:g2} implies $x\in \pi_1(M)$.
Again, $(a^x,b^x)$ gives a generalized torsion pair in $\pi_1(M)$, contradicting Lemma \ref{lem:gtorsion}.

Thus $a^y \in \pi_1(T')$ for some $y\in \pi_1(M')$.
However, we can assume that $y\in \pi_1(M)$ by Lemma \ref{lem:g1}.
Then $(a^y,b^y)$ gives a generalized torsion pair in $\pi_1(M)$,
contradicting Lemma \ref{lem:gtorsion} again.

The argument for the element $b$ is the same.  Use Lemma \ref{lem:c} instead of Lemma \ref{lem:gtorsion}.
\end{proof}

\begin{proposition}\label{prop:tail}
Let $\Gamma$ be a mixed graph with directed edge $[a,b\rangle$.
Assume that $A(\Gamma)$ embeds into a knot group $G(K)$.
Then 
\begin{itemize}
\item[(1)]
there is neither an undirected edge $[a,c]$  nor a directed edge $[a,c\rangle$ with $c\ne b$; and
\item[(2)]
there is neither an undirected edge $[b,c]$  nor a directed edge $[b,c\rangle$.
\end{itemize}
\end{proposition}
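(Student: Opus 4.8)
The plan is to prove both assertions by a single two-move strategy: first push the third vertex $c$ into $\pi_1(M)$ using the amalgamated-product machinery, and then extract a contradiction from the two features of the pair $(a,b)$ recorded at the start of the section, namely that $b^2=h$ is the (central) regular fiber of $M$ and that $b$ is conjugate to a power $e^p$ of the exceptional fiber.

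For the confinement move I would work in the splitting $G(K)=\pi_1(M')*_{\pi_1(T)}\pi_1(N)$ with $a,b\in\pi_1(M)\le\pi_1(M')$, recalling from Lemma \ref{lem:anott} that neither $a$ nor $b$ is conjugate into $H=\pi_1(T)$. In every case the hypothesised edge produces a relation $g^c=g^{\pm1}$ with $g\in\{a,b\}$: an undirected edge $[a,c]$ or $[b,c]$ gives $g^c=g$, while a directed edge $[a,c\rangle$ or $[b,c\rangle$ gives $g^c=g^{-1}$. In either case $g^c\in\pi_1(M')-H$, so Lemma \ref{lem:g2} (with $w=c$) yields $c\in\pi_1(M')$. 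When $M'=M$ this is already $c\in\pi_1(M)$; in the cable-space case $M\subsetneq M'$, where $\pi_1(M')=\pi_1(M)*_{\pi_1(T')}\pi_1(L)$, I would iterate Lemma \ref{lem:g2} across the second splitting, using that $g$ is not conjugate into $\pi_1(T')$ either (Lemmas \ref{lem:gtorsion} and \ref{lem:c}), exactly as in the proof of Lemma \ref{lem:anott}, to conclude $c\in\pi_1(M)$ in all cases.

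Once $c\in\pi_1(M)$, the contradictions are short. For part (2), which covers both $[b,c]$ and $[b,c\rangle$: an undirected edge makes $\langle b,c\rangle\cong\mathbb{Z}^2$ (Lemma \ref{lem:induce}) lie inside $C_{\pi_1(M)}(b)$, but this centralizer is infinite cyclic (it is a conjugate of $\langle e\rangle$, since $b$ is conjugate to $e^p\notin\langle h\rangle$), so $\mathbb{Z}^2\hookrightarrow\mathbb{Z}$, which is absurd; a directed edge $[b,c\rangle$ gives $c^{-1}b^2c=b^{-2}$, i.e.\ $c^{-1}hc=h^{-1}$, contradicting the centrality of $h$ in $\pi_1(M)\ni c$, as $h\ne h^{-1}$ in a torsion-free group. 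For part (1), which covers both $[a,c]$ and $[a,c\rangle$: since $a$ is joined to both $b$ and $c$ in the underlying graph, triangle-freeness (Lemma \ref{lem:triangle}) forbids an edge between $b$ and $c$, so $\langle b,c\rangle$ is free of rank two by Lemma \ref{lem:induce} and in particular $[b^2,c]\ne1$; but $c\in\pi_1(M)$ commutes with the central element $b^2=h$, giving $[b^2,c]=1$, a contradiction.

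The technical heart, and the step I expect to need the most care, is the confinement move, specifically the passage from $c\in\pi_1(M')$ to $c\in\pi_1(M)$ in the cable-space case: one must iterate Lemma \ref{lem:g2} across the inner splitting and verify that $a$ and $b$ avoid the inner boundary torus, which is precisely the bookkeeping already done in Lemma \ref{lem:anott}. The only other non-formal ingredient is the identification $C_{\pi_1(M)}(b)\cong\mathbb{Z}$ used for the undirected case of part (2), which I would read off from the amalgamated presentation of $\pi_1(M)$ together with $b\sim e^p$; everything else is a direct consequence of $h=b^2$ being central and the freeness statements from Lemmas \ref{lem:induce} and \ref{lem:triangle}.
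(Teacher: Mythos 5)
Your proposal is correct, and its technical core --- the confinement of $c$ --- is exactly the paper's argument: apply Lemma \ref{lem:g2} to the splitting $G(K)=\pi_1(M')*_{\pi_1(T)}\pi_1(N)$ using the non-conjugacy from Lemma \ref{lem:anott}, then iterate across $\pi_1(M')=\pi_1(M)*_{\pi_1(T')}\pi_1(L)$ when $M'\ne M$. (One bookkeeping remark: Lemmas \ref{lem:gtorsion} and \ref{lem:c} give only $a,b\notin\pi_1(T')$; the non-conjugacy into $\pi_1(T')$ that Lemma \ref{lem:g2} requires is the content of Lemma \ref{lem:anott}, whose Lemma \ref{lem:g1}-based proof you correctly point to as the fallback.) Where you genuinely differ is the endgame. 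The paper, once $a,b,c\in\pi_1(M)$, simply observes that the induced subgraph on $\{a,b,c\}$ is not a sink star and cites Proposition \ref{prop:torus2} and Theorem \ref{thm:cable}; you instead argue directly from $b^2=h$ being central in $\pi_1(M)$. That input is valid --- the normalizing conjugation in Lemma \ref{lem:kb} can be chosen so that $b=e^p$, whence $b^2=e^{2p}=h$ --- though it is not literally ``recorded at the start of the section'' as you say. With it, your part (1) and the directed case of part (2) need nothing beyond centrality, torsion-freeness, and Lemmas \ref{lem:induce} and \ref{lem:triangle}, which is genuinely more elementary than the paper's citation. The undirected case of part (2) is the only place you need an ingredient absent from the paper, namely $C_{\pi_1(M)}(b)\cong\mathbb{Z}$; this is true but needs proof, e.g.\ by passing to $\pi_1(M)/\langle\!\langle h\rangle\!\rangle$, which is $\mathbb{Z}_{2p}*\mathbb{Z}_q$ (torus knot exterior) or $\mathbb{Z}_{2p}*\mathbb{Z}$ (cable space), where the image of $b$ is a nontrivial element of the finite cyclic factor and centralizers in free products are controlled by \cite[Corollary 4.1.6]{MKS} --- precisely the device the paper uses inside its proof of Proposition \ref{prop:torus2}. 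In effect, your endgame re-proves, in a self-contained way, the special case of the Section 3 classification that the paper invokes wholesale; the trade-off is transparency of the contradiction versus brevity and uniformity, and both routes are sound.
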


\begin{proof}
By Lemma \ref{lem:forest}, each connected component of $\Gamma$ is a tree.

(1)
If there is an edge $[a,c]$ or $[a,c\rangle$ with $c\ne b$, then
we have a relation $a^c=a^{\pm 1}$ in $G(K)$.
%We remark that $b$ and $c$ generate a free subgroup of rank two in $G(K)$, because
%they are not adjacent in $\Gamma$.

Consider $G(K)=\pi_1(M')*_{\pi_1(T)} \pi_1(N)$ as above.
We have $a\in \pi_1(M')-\pi_1(T)$ (Lemma \ref{lem:gtorsion}) and $a$ is not conjugate into $\pi_1(T)$ in $G(K)$
(Lemma \ref{lem:anott}).
Lemma \ref{lem:g2} implies $c\in \pi_1(M')$.

If $M'=M$, then we have $a,b,c\in \pi_1(M)$.
This is impossible by Proposition \ref{prop:torus2} and Theorem \ref{thm:cable}.

If $M'\ne M$, then consider a splitting $M'=M\cup_{T'} L$ as in the proof of Lemma \ref{lem:anott}.
Then $a\in \pi_1(M)-\pi_1(T')$.
Since $a$ is not conjugate into $\pi_1(T')$ in $\pi_1(M')$ by Lemma \ref{lem:anott}
(we can choose any boundary component of $M$ as $T$),
 Lemma \ref{lem:g2} implies $c\in \pi_1(M)$, so $a,b,c \in \pi_1(M)$, impossible again.

 (2)
 Since $\Gamma$ is simple (no multiple edges),  $c\ne a$ automatically.
%Thus $c$ and $a$ generate a free subgroup of rank two in $G(K)$.
The rest of the argument is the same as (1).  Use Lemma \ref{lem:c} instead of Lemma \ref{lem:gtorsion}.
 \end{proof}

\begin{corollary}\label{cor:direct}
Let $\Gamma$ be a mixed graph.
If $A(\Gamma)$ embeds into a knot group, then each connected component of $\Gamma$ having
a directed edge is a sink star.
\end{corollary}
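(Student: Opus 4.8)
The plan is to derive the corollary purely combinatorially from Lemma~\ref{lem:forest} and Proposition~\ref{prop:tail}. Fix a connected component $C$ of $\Gamma$ that contains at least one directed edge, and suppose $A(\Gamma)\le G(K)$. By Lemma~\ref{lem:forest} the underlying graph $\bar{C}$ is a tree, so $C$ has no cycles and in particular no multiple edges. I want to show that $C$ is a sink star $S_n$ for some $n\ge 1$; that is, $C$ has a single vertex which is the common head of all its edges, every other vertex being a leaf joined to it by a directed edge pointing inward.

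First I would record the local consequences of Proposition~\ref{prop:tail}, applied to an arbitrary directed edge. If $[a,b\rangle$ is a directed edge of $\Gamma$, then part~(1) says the tail $a$ is incident to no undirected edge and is the tail of no directed edge other than $[a,b\rangle$, while part~(2) says the head $b$ is incident to no undirected edge and is the tail of no directed edge at all. From this I extract two structural facts. A vertex that is the head of some directed edge is a \emph{pure sink}: applying part~(2) to that incoming edge shows it has no undirected edges and is the tail of nothing, so every edge at it points inward. Dually, if $a$ is the tail of $[a,b\rangle$, then its only possible extra edge, by part~(1), would be an incoming directed edge $[z,a\rangle$; but part~(2) applied to $[z,a\rangle$ would forbid $a$ from being a tail, contradicting the edge $[a,b\rangle$. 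Hence $a$ is a leaf whose unique edge is the outgoing $[a,b\rangle$.

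Combining these, in $C$ every head is a pure sink and every tail is an outgoing leaf, and no vertex is simultaneously a head and a tail. Now I would pick any directed edge $[u,v\rangle$ of $C$ and focus on its head $v$, a pure sink. Every edge at $v$ is directed into $v$, and by the previous paragraph each neighbor of $v$ is an outgoing leaf. Thus $\{v\}$ together with $N(v)$ forms a star all of whose edges are directed toward $v$; since the neighbors are leaves (degree one in $\bar{C}$) and $\bar{C}$ is connected, no further vertices exist and $C=\{v\}\cup N(v)$. With $n=\deg v\ge 1$ this is exactly the sink star $S_n$.

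The argument is essentially bookkeeping on top of Proposition~\ref{prop:tail}; the only point requiring care is the mutual exclusivity of ``head'' and ``tail'' at a vertex, which is what collapses the tree to depth one and prevents longer directed paths or mixed configurations. Once that is in place, the connectivity of $\bar{C}$ (Lemma~\ref{lem:forest}) does the rest, and the degenerate case $|C|=2$ is the sink star $S_1$, handled by the same reasoning.
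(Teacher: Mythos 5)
Your proof is correct and takes the same route as the paper: the paper simply states that the corollary ``immediately follows from Proposition~\ref{prop:tail},'' and your argument is exactly the combinatorial bookkeeping behind that claim (head of an edge is a pure sink by part~(2), tail is an outgoing leaf since an incoming edge at a tail would contradict part~(2), hence the component collapses to a star directed inward). Note only that your appeal to Lemma~\ref{lem:forest} is superfluous --- simplicity of the mixed graph plus Proposition~\ref{prop:tail} already forbid cycles, and connectivity of the component is by definition --- but this does not affect correctness.
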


\begin{proof}
This immediately follows from Proposition \ref{prop:tail}.% and \ref{prop:head}.
\end{proof}

%%%%
\section{No Seifert-Seifert gluing}

In this section, we consider the case where $E(K)$ contains both a Seifert piece and a hyperbolic piece, and there is no Seifert-Seifert gluing.

\begin{theorem}\label{thm:SH}
Suppose that $E(K)$ contains both a Seifert piece and a hyperbolic piece, and that there is no Seifert-Seifert gluing.
Then a TRAAG $A(\Gamma)$ embeds into $G(K)$ if and only if
there is at least one Seifert fibered piece of even type, and
$\Gamma= mP_1+\sum T_{m_i}+\sum S_{n_j}$ for $m\ge 0$ and $m_i, n_j\ge 1$.  (By our assumption, $\Gamma$ has at least one sink star.)
\end{theorem}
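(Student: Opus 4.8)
The plan is to establish the two directions separately, exploiting the JSJ structure $G(K)=\pi_1(M')*_{\pi_1(T)}\pi_1(N)$ used throughout Section 6. First I would treat the ``only if'' direction. Suppose $A(\Gamma)\le G(K)$. By Lemma \ref{lem:forest}, $\bar{\Gamma}$ is a forest, so every connected component of $\Gamma$ is a tree; by Corollary \ref{cor:direct}, each component carrying a directed edge is a sink star $S_{n_j}$. The components with no directed edge form a sub-RAAG $A(\bar\Gamma_0)$ where $\bar\Gamma_0$ is an undirected forest, and Lemma \ref{lem:induce} embeds it in $G(K)$; Theorem \ref{thm:katayama}(3) then forces this undirected part to be of the form $mP_1+\sum T_{m_i}$. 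Combining these, $\Gamma=mP_1+\sum T_{m_i}+\sum S_{n_j}$, and since our standing Assumption guarantees at least one directed edge, at least one sink star is present. Finally, the existence of a directed edge together with Lemma \ref{lem:kb} forces $E(K)$ to contain a Seifert fibered piece of even type, giving the stated necessity.

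For the ``if'' direction, the task is to realize $A(\Gamma)=A(mP_1)*\bigl(*_i A(T_{m_i})\bigr)*\bigl(*_j A(S_{n_j})\bigr)$ as a subgroup, given a Seifert fibered piece $M$ of even type. The natural strategy is to build each free factor inside a conjugate of $\pi_1$ of a distinct JSJ piece and then amalgamate them into a free product via a ping-pong / normal-form argument in the graph-of-groups decomposition. Proposition \ref{prop:torus1} (and Theorem \ref{thm:cable}) already embeds each $A(S_{n_j})$ into the even-type Seifert piece, while Katayama's construction underlying Theorem \ref{thm:katayama}(3) embeds the undirected part $mP_1+\sum T_{m_i}$; the remaining work is to assemble these into a single free product. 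Here I would conjugate the various factors by suitable elements $w$ so that their supports lie in pairwise ``incompatible'' vertex/edge groups of the JSJ tree, and then invoke the standard fact that subgroups supported on distinct ends of the Bass–Serre tree generate their free product (the same normal-form reasoning as in the proof of Proposition \ref{prop:torus1}).

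The main obstacle I expect is precisely this assembly step: ensuring that the separately-constructed factors jointly generate a \emph{free product} rather than some larger or smaller group. Each $A(T_{m_i})=F_{m_i}\times\mathbb Z$ and each $A(S_{n_j})$ must be placed so that no unintended commutation or cancellation occurs across factors, which requires controlling the representative lengths of elements in the amalgamated product $\pi_1(M')*_{\pi_1(T)}\pi_1(N)$. The key technical input is Lemma \ref{lem:anott}: since the generalized-torsion generators are not conjugate into the edge group $\pi_1(T)$, conjugating the Seifert-based factors by elements moving them off the edge group yields reduced words whose interactions with the Katayama factors are controlled by Lemmas \ref{lem:g1} and \ref{lem:g2}. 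I would verify that an arbitrary nontrivial alternating word in the proposed generators has positive syllable length in the Bass–Serre normal form, exactly as in Proposition \ref{prop:torus1}, thereby certifying the free product and completing the embedding.
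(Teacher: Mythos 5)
Your ``only if'' direction is correct and essentially the paper's argument: Corollary \ref{cor:direct} handles the components with directed edges, Katayama's theorem (via Lemma \ref{lem:induce}, or via Lemma \ref{lem:pride} applied to $\bar{\Gamma}$ as the paper does) handles the undirected part, and Lemma \ref{lem:kb} forces an even-type Seifert piece. The problem is the ``if'' direction, where your assembly plan has a genuine gap. First, the tools you name are the wrong ones: Lemma \ref{lem:anott} \emph{presupposes} an embedding $A(\Gamma)\le G(K)$ and extracts a constraint from it, so it cannot be invoked while constructing such an embedding; likewise Lemmas \ref{lem:g1} and \ref{lem:g2} are necessity-direction tools about conjugates into edge groups, not freeness certificates. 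What actually makes the reduced-word argument work in this case is the malnormality of $\pi_1(T)$ in the fundamental group of the \emph{hyperbolic} piece $H$ (de la Harpe--Weber), together with a loxodromic $b\in\pi_1(H)$ satisfying $b^i\notin\pi_1(T)$ for all $i\ne 0$: conjugating by $b$ pushes edge-group elements off $\pi_1(T)$, which is exactly what fails across a Seifert--Seifert gluing and why that case needs a separate, much harder treatment. Your proposal never identifies malnormality, so the ``verify positive syllable length'' step has no engine behind it.

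Second, your plan to place each free factor ``in a distinct JSJ piece'' cannot work as stated, because $\Gamma$ may have arbitrarily many stars and sink stars while $E(K)$ has only finitely many pieces; you would need a simultaneous ping-pong over arbitrarily many conjugates, which is precisely the unresolved difficulty you flag. The paper avoids this entirely by a reduction you are missing: set $n=\max\{m_i,n_j\}$ and embed only $A(S_n)*\mathbb{Z}$ into $\pi_1(M\cup H)$ (taking $A(S_n)\le\pi_1(M)$ from Proposition \ref{prop:torus1}/Remark \ref{rem:cable}, conjugated by $ba$, together with $\langle b\rangle$). Then everything else is abstract group theory: any finite free product $A(S_n)*\cdots*A(S_n)$ embeds in $A(S_n)*\mathbb{Z}$ (a covering-space argument), each $A(S_{n_j})$ and $A(P_1)$ embeds in $A(S_n)$ by Lemma \ref{lem:induce}, and each $A(T_{m_i})$ embeds in $A(S_n)$ via Pride's theorem (Lemma \ref{lem:pride}), so Katayama's construction is not needed at all for this direction. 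Without this reduction or a genuinely worked-out simultaneous ping-pong with the malnormality input, your ``if'' direction does not go through.
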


First, we prove the ``if'' part.

\begin{proposition}\label{prop:SHif}
Suppose that $E(K)$ contains both a Seifert piece and a hyperbolic piece, and that there is no Seifert-Seifert gluing.
Let $\Gamma=mP_1+\sum T_{m_i}+\sum S_{n_j}$ for $m\ge 0$ and $m_i,n_j\ge 1$, where there is at least one sink star.
If $E(K)$ contains at least one Seifert fibered  piece of even type, then $A(\Gamma)$ embeds into $G(K)$.
\end{proposition}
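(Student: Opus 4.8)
The plan is to exploit the free-product decomposition
$A(\Gamma)=A(mP_1)*\big(*_i A(T_{m_i})\big)*\big(*_j A(S_{n_j})\big)$
coming from the disjoint union of the connected components, and to embed each free factor into $G(K)$ in such a way that the images together generate their free product without hidden relations. Recall that $A(mP_1)=F_m$ and $A(T_{m_i})=F_{m_i}\times\mathbb{Z}$ carry no directed edge, while each sink star contributes a Klein relation. So I would first dispose of the torsion-free, directed-edge-free part, and then graft on the sink stars using an even-type Seifert piece, whose existence is exactly the hypothesis that makes the Klein relations realizable.

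For the directed-edge-free part, the underlying graph $mP_1+\sum T_{m_i}$ is precisely a disjoint union of isolated vertices and stars. Since $E(K)$ contains both a Seifert fibered piece and a hyperbolic piece and has no Seifert--Seifert gluing, Katayama's Theorem~\ref{thm:katayama}(3) applies directly and produces an embedding of the RAAG $A(mP_1+\sum T_{m_i})$ into $G(K)$. For each sink star, Proposition~\ref{prop:torus1} (when the even-type piece is a torus knot exterior) and Remark~\ref{rem:cable}/Theorem~\ref{thm:cable} (when it is a cable space of even type) give an embedding of $A(S_{n_j})$ into $\pi_1(M)$ for a fixed even-type Seifert fibered piece $M$, hence into $G(K)$; the central vertex maps to a power $e^{p}$ of the exceptional fiber and the leaves to commutators as in Proposition~\ref{prop:torus1}.

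The heart of the argument, and the step I expect to be the main obstacle, is to combine these individual embeddings into a single embedding of the free product. Here I would use the nontrivial graph-of-groups structure $G(K)=\pi_1(M')*_{\pi_1(T)}\pi_1(N)$ (with $M'\supseteq M$, as set up in Section~5) and its action on the associated Bass--Serre tree $\mathcal{T}$. Because $E(K)$ has a hyperbolic piece, the splitting is nontrivial and $G(K)$ contains an element $t$ acting loxodromically on $\mathcal{T}$; conjugating the factor subgroups by suitably high and pairwise distinct powers of $t$ moves their fixed sets into ``general position,'' after which a standard ping-pong/combination argument on $\mathcal{T}$ forces the subgroup they generate to split as the desired free product. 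The care needed is twofold: the conjugations must be chosen so that each factor meets the edge groups $\pi_1(T)$ trivially (so the amalgamation does not reappear inside the generated subgroup), and the Klein relations must survive the conjugation. The latter is automatic, since $(g,c)\mapsto(g^{w},c^{w})$ sends a generalized torsion pair to a generalized torsion pair (the remark following Lemma~\ref{lem:gtorsion}), so each conjugated sink star still satisfies its defining relations.

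Finally, once the free product is realized, the presentation of $A(\Gamma)$ matches the subgroup generated by the chosen images, completing the embedding. I would isolate the combinatorial bookkeeping (disjointness of ping-pong domains and triviality of edge-group intersections) as a single combination lemma, so that it can be verified once and then applied uniformly to all free factors, keeping the geometric input (Katayama's theorem together with Proposition~\ref{prop:torus1} and Remark~\ref{rem:cable}) cleanly separated from the free-product assembly.
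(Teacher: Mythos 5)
Your opening steps are fine (embedding each $A(S_{n_j})$ into an even-type Seifert piece via Proposition \ref{prop:torus1} and Remark \ref{rem:cable}), but the combination step---which you correctly identify as the heart of the matter---fails as described. Your ping-pong scheme requires that, after conjugation, ``each factor meets the edge groups $\pi_1(T)$ trivially.'' This is impossible to arrange, for any conjugates and indeed for any embeddings whatsoever. If $a$ denotes the central vertex of a sink star $S_{n_j}$ (and $\Gamma$ has at least one), the Klein relations give $a^{b_i}=a^{-1}$, so by Lemma \ref{lem:kb} the image of $a$ is conjugate to $e^{p}$, hence $a^{2}$ is conjugate to the regular fiber $h$ of the even-type piece $M$; the centers of the $A(T_{m_i})=F_{m_i}\times\mathbb{Z}$ are likewise forced into fiber subgroups, since in a knot group the centralizer of the image of the center is too large to be cyclic or peripheral when $m_i\ge 2$. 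But $h$ is central in $\pi_1(M)$ and lies in $\pi_1(T')$ for every boundary torus $T'$ of $M$, so on the Bass--Serre tree $h$ fixes every edge incident to the vertex fixed by the factor containing it. Conjugating by powers of a loxodromic element only relocates that fixed vertex: the conjugated factor still contains an element fixing the entire star of its fixed vertex, so no choice of conjugators produces the ``general position'' that a standard ping-pong argument needs. Some mechanism beyond trivial edge-group intersection is required to neutralize these fiber elements, and your proposal does not supply one.

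The paper's proof supplies exactly that mechanism, and moreover avoids combining many elliptic factors at all. It pairs a single conjugate $A(S_n)^{ba}$ (with $a\in\pi_1(M)-\pi_1(T)$ and $b\in\pi_1(H)-\pi_1(T)$ chosen so that $b^i\notin\pi_1(T)$ for all $i\ne 0$) against the cyclic group $\langle b\rangle$, and the troublesome elements $x\in A(S_n)\cap\pi_1(T)$ (powers of $h$) are handled by malnormality of $\pi_1(T)$ in the hyperbolic piece: for such $x$ one has $x^{b}\in\pi_1(H)-\pi_1(T)$, so $x^{ba}$ is still reduced with ends in $\pi_1(M)-\pi_1(T)$ (Claims \ref{cl:SHif1} and \ref{cl:SHif}). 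This yields $A(S_n)*\mathbb{Z}\le G(K)$ for $n=\max\{m_i,n_j\}$. All remaining factors are then assembled \emph{abstractly} inside $A(S_n)*\mathbb{Z}$ rather than geometrically in the tree: $A(P_1)\le A(S_n)$ and $A(S_{n_j})\le A(S_n)$ by Lemma \ref{lem:induce}, $A(T_{m_i})\le A(T_n)\le A(S_n)$ by Lemma \ref{lem:pride}, and a free product of arbitrarily many copies of $A(S_n)$ embeds in $A(S_n)*\mathbb{Z}$ by a covering-space argument. Note also that invoking Katayama's Theorem \ref{thm:katayama}(3) as a black box for $A(mP_1+\sum T_{m_i})$ would face the same obstruction: you would need to control where Katayama's images sit relative to the JSJ tori, which existence of an embedding alone does not give. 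The paper's detour through $A(S_n)*\mathbb{Z}$ makes all of this unnecessary.
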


\begin{proof}
Let $M$ be a Seifert fibered piece of even type, and $H$ a hyperbolic piece which
shares a torus boundary $T$.
Then $\pi_1(M\cup H)=\pi_1(M)*_{\pi_1(T)} \pi_1(H)$.
We know that $A(S_n)\le \pi_1(M)$ for any $n\ge 1$ by Proposition \ref{prop:torus1} and
Remark \ref{rem:cable}.

Take $a\in \pi_1(M)-\pi_1(T)$ and $b\in \pi_1(H)-\pi_1(T)$ such that
$b^i \not \in \pi_1(T)$ for any non-zero integer $i$.
(The choice of $b$ is attained by taking a loxodromic element  in $\pi_1(H)$.) % See \cite[Lemma 5.25(2)]{P}.)
We remark that $\pi_1(T)$ is malnormal in $\pi_1(H)$ \cite{HW}.

\begin{claim}\label{cl:SHif1}
%\begin{itemize}
%\item[(1)]
For $x\in A(S_n)$ with $x\ne 1$, $x^{ba}$ gives a reduced word whose first and last lie in $\pi_1(M)-\pi_1(T)$.
%\item[(2)]
%For $y \in \langle b\rangle$ with $y\ne 1$, $y^{ab}$ gives a reduced word whose first and last
%lie in $\pi_1(H)-\pi_1(T)$.
%\end{itemize}
\end{claim}

\begin{proof}[Proof of Claim \ref{cl:SHif1}]
%(1)
If $x\in \pi_1(M)-\pi_1(T)$, then $a^{-1}b^{-1}xba$ is reduced.
Otherwise $x\in \pi_1(T)$.
Then $x^b\in \pi_1(H)-\pi_1(T)$ by the malnormality.
Hence $a^{-1}x^ba$ is reduced.
%
%(2) is obvious, since $b^i\in \pi_1(H)-\pi_1(T)$ for any non-zero integer $i$.
\end{proof}

\begin{claim}\label{cl:SHif}
In $\pi_1(M\cup H)$,
$\langle A(S_n)^{ba}, b\rangle =A(S_n)^{ba} * \langle b \rangle$.
\end{claim}

\begin{proof}[Proof of Claim \ref{cl:SHif}]
Let $w\in \langle A(S_n)^{ba}, b \rangle$ be a non-empty reduced word.
Assume $w=1$.
We can assume that 
\[
w=u_1v_1u_2v_2 \dots u_sv_s,
\]
where $s\ge 1$, $u_i\in A(S_n)^{ba}$ and $v_i \in \langle b\rangle$ after conjugation.
Then we have $u_i=(u_i')^{ba} \ (u_i'\in A(S_n))$ and 
$v_i$ is a non-zero power of $b$.
%=(v_i')^{ab}\ (v_i'\in \langle b \rangle)$.
By Claim \ref{cl:SHif1} and $v_i\in \pi_1(H)-\pi_1(T)$,  this gives a non-empty reduced word, a contradiction.
\end{proof}

Thus we have a free product  subgroup $A(S_n)* \mathbb{Z} \le G(K)$.

For a given mixed graph $\Gamma=mP_1+\sum T_{m_i} +\sum S_{n_j}$, let $n=\max\{m_i,n_j\}$.
For any finite sum $\Gamma'$ of $S_n$, 
$A(\Gamma')=A(S_n)*\dots *A(S_n)$, so
$A(\Gamma')\le A(S_n)*\mathbb{Z}$.
(This can be seen by taking a covering space of a wedge of a $2$-complex $\Delta$ with $\pi_1(\Delta)=A(S_n)$ and a circle.)

If we take sufficiently many copies of $S_n$, then $A(\Gamma)\le A(\Gamma')$ by Lemmas \ref{lem:induce} and \ref{lem:pride}.
Thus we have $A(\Gamma)\le G(K)$.
\end{proof}

%%%%%%%%%%%%%

\begin{proof}[Proof of Theorem \ref{thm:SH}]
The ``if'' part is shown in Proposition \ref{prop:SHif}.

Conversely, suppose that $A(\Gamma)$ embeds into $G(K)$.
By Lemma \ref{lem:pride} and Theorem \ref{thm:katayama},
we know that 
$\bar{\Gamma}=mP_1+\sum T_{m_i}$.
By Corollary \ref{cor:direct}, each connected component of $\Gamma$ having a directed edge is a sink star.
Hence the conclusion follows.
\end{proof}

%%%%%%%%%%%%%%%%%%%%%%%%%%
\section{Seifert-Seifert gluing}\label{sec:SS}

Finally, we prove the next to complete the proof of Theorem \ref{thm:main}.

\begin{theorem}\label{thm:ss}
Assume that $E(K)$ contains a Seifert-Seifert gluing.
Then a TRAAG $A(\Gamma)$ embeds into $G(K)$ if and only if
there is at least one Seifert fibered piece of even type,
and $\Gamma=\sum S_{n_i}+F$ for $n_i\ge 1$, where $F$ is a forest and
$\Gamma$ has at least one sink star.  (Possibly, $F$ is empty.)
\end{theorem}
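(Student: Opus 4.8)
The plan is to prove both directions by reducing to the pieces already established. For the ``only if'' direction, suppose $A(\Gamma)$ embeds into $G(K)$. First I would apply Lemma \ref{lem:pride} together with Theorem \ref{thm:katayama}(4): since $A(\bar\Gamma)\le A(\Gamma)\le G(K)$ and $E(K)$ contains a Seifert-Seifert gluing, the underlying graph $\bar\Gamma$ must be a forest. Next, by Corollary \ref{cor:direct}, every connected component of $\Gamma$ that carries a directed edge is a sink star $S_{n_i}$. The components with no directed edge together form an undirected forest $F$ (a subforest of $\bar\Gamma$). This decomposes $\Gamma$ as $\sum S_{n_i}+F$. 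Because our standing Assumption forces $D\ne\varnothing$, at least one component has a directed edge, so at least one sink star is present. Finally, since $\Gamma$ contains a directed edge, Lemmas \ref{lem:induce} and \ref{lem:kb} guarantee that $E(K)$ contains a Seifert fibered piece of even type. This settles the necessity of all stated conditions.

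For the ``if'' direction, assume $E(K)$ contains a Seifert fibered piece $M$ of even type and $\Gamma=\sum S_{n_i}+F$ as described. I would let $n=\max\{n_i\}$ enlarged to also dominate the relevant data of $F$, and aim to embed a single large model group into $G(K)$ from which $A(\Gamma)$ is recovered by Lemmas \ref{lem:induce} and \ref{lem:pride}. The key structural input is the Seifert-Seifert gluing: it yields an essential torus $T$ splitting $E(K)$ into two pieces each of which is (or contains) a Seifert fibered piece, so $G(K)=\pi_1(M')*_{\pi_1(T)}\pi_1(N')$ where $\pi_1(M')\ge\pi_1(M)$ contains $A(S_n)$ by Proposition \ref{prop:torus1} and Remark \ref{rem:cable}. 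Since Theorem \ref{thm:katayama}(4) tells us $A(\bar\Gamma)$ already embeds whenever $\bar\Gamma$ is a forest, the remaining task is to combine a RAAG coming from the forest $F$ with the twisted relations coming from the sink stars. The natural target is a free product $A(S_n)*A(\bar\Gamma)$ or, more economically, a group of the form $A(S_n)*\mathbb{Z}$ from which arbitrarily large forests and arbitrarily many sink stars can be extracted, exactly as in the covering-space argument used at the end of Proposition \ref{prop:SHif}.

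To build such a free product I would mimic Claims \ref{cl:SHif1} and \ref{cl:SHif}: choose $a\in\pi_1(M')-\pi_1(T)$ and an element $b\in\pi_1(N')-\pi_1(T)$ with $b^i\notin\pi_1(T)$ for all $i\ne 0$, then form $A(S_n)^{ba}$ and show $\langle A(S_n)^{ba},b\rangle=A(S_n)^{ba}*\langle b\rangle$ by a normal-form/reduced-word argument in the amalgam. The main obstacle, and the point where this case genuinely differs from Section~6, is that $N'$ is now Seifert fibered rather than hyperbolic, so $\pi_1(T)$ need \emph{not} be malnormal in $\pi_1(N')$; the clean malnormality step of Claim \ref{cl:SHif1} is unavailable. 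I would instead exploit that in a Seifert fibered piece one can find $b$ whose only powers landing in $\pi_1(T)$ are powers of the fiber, or choose $b$ in the base-orbifold direction so that $\langle b\rangle\cap\pi_1(T)=\{1\}$ and conjugates behave controllably; this requires checking that $b$ can be taken to generate a free factor relative to the amalgamation, using Lemmas \ref{lem:g1} and \ref{lem:g2} to constrain where conjugates of the sink-star generators can lie. Once the free product $A(S_n)*\mathbb{Z}$ is produced, the passage to arbitrary $\Gamma=\sum S_{n_i}+F$ is routine by Lemmas \ref{lem:induce} and \ref{lem:pride}, completing the proof.
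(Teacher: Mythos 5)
Your ``only if'' direction is correct and coincides with the paper's: Lemma \ref{lem:pride} plus Theorem \ref{thm:katayama} give that $\bar\Gamma$ is a forest, Corollary \ref{cor:direct} makes every component with a directed edge a sink star, and Lemmas \ref{lem:induce} and \ref{lem:kb} produce the even-type piece. The ``if'' direction, however, has a genuine gap, and it is twofold. First, your fallback target $A(S_n)*\mathbb{Z}$ is too small: from it you can only extract free products of copies of $A(S_n)$ (hence sink stars, stars and isolated vertices), \emph{not} an arbitrary forest. Indeed, by Proposition \ref{prop:SHif} the group $A(S_n)*\mathbb{Z}$ already embeds into knot groups having a Seifert piece of even type, a hyperbolic piece, and no Seifert--Seifert gluing; if $A(P_4)$ embedded into $A(S_n)*\mathbb{Z}$, it would embed into such knot groups, contradicting Theorem \ref{thm:katayama}(3), which forbids anything beyond $mP_1+\sum T_{n_i}$ there. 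The forest part of $\Gamma$ is exactly what distinguishes case (4) from case (3), so it cannot be recovered from $A(S_n)*\mathbb{Z}$ by any covering-space trick. This is why the paper's target is the free product $A(S_n)*A(P_4)$ (Proposition \ref{prop:ss-final}), combined with Kim--Koberda's embedding $A(F)\le A(P_4)$; your sketch only ever constructs $\langle A(S_n)^{ba},b\rangle\cong A(S_n)*\mathbb{Z}$, so the summand $F$ is never embedded at all.

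Second, the step you correctly flag as the main obstacle is the actual content of the paper's proof, and your proposed fix does not address it. The difficulty is not that powers of $b$ might land in $\pi_1(T)$; it is that elements of the two subgroups you want to put in free product position genuinely lie in $\pi_1(T)$ --- Katayama's copy of $A(P_4)$ is generated by $a_1,a_2$ together with the fiber elements $h_1',h_2'\in\pi_1(T)$, and it \emph{straddles} the Seifert--Seifert torus, so it sits inside neither factor of your amalgam. The paper handles this with Lemma \ref{lem:peri1}, a three-case construction (torus knot exterior, composing space, cable space) of a conjugator $a$ satisfying $t^a\notin\pi_1(T)$ for every $t\in\pi_1(T)-\langle h_1\rangle$, and then the normal-form analysis of Lemmas \ref{lem:embed1} and \ref{lem:embed2}, which decomposes words in $a_1,h_1',h_2',a_2$ and uses crucially that $h_1\ne h_2^{\pm1}$ on $T$; nothing in your proposal substitutes for these. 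Finally, the paper needs a case split you omit: when no even-type piece is adjacent to a Seifert piece, one must abandon the gluing torus and split instead along a boundary torus of $M$ facing a hyperbolic piece, where malnormality is available on one side (Claim \ref{cl:last1}); in your single setup over the Seifert--Seifert torus, $A(S_n)$ and $A(P_4)$ would then lie in positions your reduced-word argument cannot control.
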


%The ``only if'' part is easy.
%
%\begin{theorem}\label{thm:ssonlyif}
%Under the assumption of Theorem \ref{thm:ss},
%if a TRAAG $A(\Gamma)$ embeds into $G(K)$, then there is at least one
% Seifert fibered piece of even type,
%and $\Gamma=\sum S_{n_i}+F$, where $F$ is a forest and
%$\Gamma$ has at least one sink star.
%\end{theorem}

%\begin{proof}
%By Lemma \ref{lem:pride} and Theorem \ref{thm:katayama},
%$\bar{\Gamma}$ is a forest.
%On the other hand, each connected component of $\Gamma$ having a directed edge is a sink star by Corollary \ref{cor:direct}.
%\end{proof}

%%%%%%%%%%%%%%%%%%%%%%%%%%%%%%%%%%%%%%%%%

\subsection{Katayama's embedding of $A(P_4)$}

We assume that  $E(K)$ contains a Seifert-Seifert gluing in this section.
Let $M_1$ and $M_2$ be such Seifert  fibered pieces with a common torus boundary $T$.
Then Katayama \cite{K1} (also \cite{NW}) observes $A(P_4)\le \pi_1(M_1\cup M_2)$.
Since $A(F)\le A(P_4)$  for any forest $F$ \cite{KK},
it implies $A(F)\le \pi_1(M_1\cup M_2)\le G(K)$.

Let $G=\pi_1(M_1\cup M_2)=\pi_1(M_1)*_{\pi_1(T)}\pi_1(M_2)$.
We briefly recall the construction $A(P_4)\le G$ of \cite{K1}.

Let $M$ be $M_1$ or $M_2$.
When $M$ is a torus knot space, 
$\pi_1(M)$ has a subgroup
$A(T_{2g})=Z(\pi_1(M))\times [\pi_1(M),\pi_1(M)]$,
where the commutator subgroup is a free group of rank $2g$.
Note that this subgroup has finite index.
If $M$ is a composing space $D_m\times S^1$, where $D_m$ is the $m$-holed disk,
then $\pi_1(M)=A(T_m)$. 
Finally, if $M$ is a cable space, then it has a finite cyclic cover homeomorphic to
a composing space.
In any case, $\pi_1(M)$ has a finite index subgroup $A(T_{m})=\mathbb{Z}\times F_{m}$ for some $m\ge 2$, where the $\mathbb{Z}$ factor is generated by a regular fiber.
For $M_i$, this subgroup is denoted by $A_i$.

Let $h_i$ be a regular fiber of $M_i$.  Then $h_i\in A_i$.
Note that $h_1\ne h_2^{\pm 1}$ on $T$, because $M_1$ and $M_2$ are adjacent distinct pieces.
We do not know whether $h_2\in A_1$ and $h_1\in A_2$.
But, since $A_i$ has finite index in $\pi_1(M_i)$,
a sufficiently high power of $h_2$ (resp.~$h_1$) lies in $A_1$ (resp.~ $A_2$). 
Let $h_i'$ be such a power.
Finally, take $a_i\in A_i$ so that $\langle a_i, h_j'\rangle$ is free.
(Although we do not know whether $h_j' \in F_{m_i}$, we can take $a_i \in F_{m_i}$.)

Thus we have $A_1'=\langle a_1,h_1', h_2'\rangle\le \pi_1(M_1)$ and $A_2'=\langle a_2,h_1', h_2'\rangle\le \pi_1(M_2)$, which are isomorphic to $A(T_2)=\mathbb{Z}\times F_2$, and $A_1'\cap A_2'=\langle h_1', h_2'\rangle=\mathbb{Z}^2$.
Hence $\langle A_1', A_2'\rangle$ gives $A(P_4)$.

%%%%

Under this situation, we prove a technical result.
We keep using the above notation.

\begin{lemma}\label{lem:peri1}
%Suppose that $M_1$ is a torus knot exterior.
Let $H=\langle h_1 \rangle \le \pi_1(T)$.
There exists an element $a\in \pi_1(M_1)-\pi_1(T)$ such that
$t^a\not \in \pi_1(T)$ for any $t \in \pi_1(T)-H$ and that $\langle a,a_1,h_2'\rangle$
is a free group of rank three.
\end{lemma}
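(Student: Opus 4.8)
The plan is to pass to the quotient of $\pi_1(M_1)$ by its central fiber and run a ping-pong argument on a Bass-Serre tree. Since $M_1$ is a Seifert fibered piece with $\chi^{\mathrm{orb}}<0$, its regular fiber $h_1$ generates the center, and the quotient $Q=\pi_1(M_1)/\langle h_1\rangle$ is the base orbifold group: in the three cases (torus knot exterior, cable space, composing space) it is $\mathbb{Z}/p*\mathbb{Z}/q$, $\mathbb{Z}/2p*\mathbb{Z}$, or a free group $F_m$ with $m\ge 2$, so in every case $Q$ is non-elementary and acts on its Bass-Serre tree $\mathcal{T}$ with $\langle h_1\rangle$ acting trivially. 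Write $\bar g$ for the image of $g\in\pi_1(M_1)$. The image of $\pi_1(T)$ is the cyclic group $\langle\bar\mu\rangle$, where $\bar\mu$ has infinite order (as $h_1$ is primitive on $T$) and hence is hyperbolic on $\mathcal{T}$ with an axis $\ell$; since $h_2'\in\pi_1(T)$ and $h_2\neq h_1^{\pm1}$, the element $\bar h_2'$ is a nonzero power of $\bar\mu$ and translates along the same axis $\ell$. Finally, because $\langle a_1,h_2'\rangle$ is free of rank two and a free group has trivial center, it meets the central subgroup $\langle h_1\rangle$ trivially; thus $\pi_1(M_1)\to Q$ restricts to an isomorphism onto $F:=\langle\bar a_1,\bar h_2'\rangle$, which is therefore free of rank two.

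I would then select $\bar a$. As $Q$ is non-elementary there are hyperbolic elements with infinitely many distinct axes, so I can pick a hyperbolic $g\in Q$ whose pair of fixed ends avoids both the endpoints of $\ell$ and the limit set of $F$, and set $\bar a=g^{N}$ for a large $N$ fixed below; let $a\in\pi_1(M_1)$ be any lift of $\bar a$. Since the axis of $\bar a$ is not $\ell$ we have $\bar a\notin\langle\bar\mu\rangle$, and because $\pi_1(T)$ maps into $\langle\bar\mu\rangle$ this already forces $a\notin\pi_1(T)$. For the first required property, take $t\in\pi_1(T)-H$, so $\bar t=\bar\mu^{b}$ with $b\neq0$; then $\bar t^{\bar a}$ is hyperbolic with axis $\bar a^{-1}\ell\neq\ell$, and were $t^{a}\in\pi_1(T)$ its image $\bar t^{\bar a}$ would lie in $\langle\bar\mu\rangle$ and translate along $\ell$, a contradiction. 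Hence $t^{a}\notin\pi_1(T)$.

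It remains to make $\langle a,a_1,h_2'\rangle$ free of rank three, and I would establish this at the level of $Q$. By a standard ping-pong (Klein combination) on $\mathcal{T}$, if the fixed ends of $g$ avoid the limit set of $F$ then for all large $N$ one has $\langle F,g^{N}\rangle=F*\langle g^{N}\rangle\cong F_2*\mathbb{Z}=F_3$, with $\bar a_1,\bar h_2',\bar a$ a free basis. Granting this, the composite $F_3\to\pi_1(M_1)\to Q$ sending the basis to $a,a_1,h_2'$ is injective, hence so is $F_3\to\pi_1(M_1)$, and therefore $\langle a,a_1,h_2'\rangle\cong F_3$; any lift $a$ works since $\langle h_1\rangle=\ker(\pi_1(M_1)\to Q)$ is central. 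I expect the main obstacle to be precisely this combination step: one must produce a single hyperbolic $g$ whose axis is $\neq\ell$ and which is in sufficiently general position relative to the already fixed pair $\bar a_1,\bar h_2'$ for the Klein criterion to apply, so that all requirements are met by one choice. The supporting points that make the ping-pong go through are the non-elementariness of $Q$ in the cable case $\mathbb{Z}/2p*\mathbb{Z}$ and the verification that the peripheral image $\bar\mu$ is genuinely hyperbolic (not conjugate into a vertex group), which follows from $\bar\mu$ having infinite order while the relevant vertex groups are finite. As an alternative route for the freeness alone, one can project the finite-index subgroup $A_1=\langle h_1\rangle\times F_{m_1}$ onto its free factor $F_{m_1}$ and reduce rank-three freeness of $\langle a,a_1,h_2'\rangle$ to extending the rank-two free subgroup $\langle a_1,\rho(h_2')\rangle$ to rank three inside $F_{m_1}$, which the general-position choice of $a$ guarantees.
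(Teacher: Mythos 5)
Your strategy --- kill the central fiber, let $Q=\pi_1(M_1)/\langle h_1\rangle$ act on a tree, and run ping-pong --- is genuinely different from the paper's proof, which instead treats the three cases (torus knot exterior, composing space, cable space) separately, picks $a$ inside the ``horizontal'' free subgroup, and rules out $t^a\in\pi_1(T)$ by hand via centralizers, homology, and (for cable spaces) a finite cyclic cover. Your route is attractive because it is uniform, but it has a concrete gap exactly at the step you flagged: the claim that $\bar{\mu}$ is hyperbolic because ``$\bar{\mu}$ has infinite order while the relevant vertex groups are finite'' is false in the cable case. There $Q\cong\mathbb{Z}/2p*\mathbb{Z}$, and the Bass--Serre tree of that free-product splitting has an \emph{infinite} vertex group. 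Worse, this is not a removable technicality of wording: writing $\pi_1(M_1)=\langle m,\ell\rangle *_{m^q\ell^{2p}=e^{2p}}\langle e\rangle$ and killing the central fiber $h_1=e^{2p}$ gives $Q=\bigl(\mathbb{Z}^2/\langle m^q\ell^{2p}\rangle\bigr)*\langle \bar{e}\rangle$, so the $\mathbb{Z}$ free factor is precisely the image of the outer boundary torus $\partial J$. If $T$ is that boundary component --- which is the typical situation, e.g.\ $K$ a cable of a torus knot, $M_1$ the cable space glued to the torus knot exterior along its outer boundary --- then $\bar{\mu}$ generates a vertex group, is elliptic, and has no axis $\ell$; both your choice of $g$ and your conjugation argument ($\bar{t}^{\bar{a}}$ has axis $\bar{a}^{-1}\ell\ne\ell$) then collapse. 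The same ambiguity infects the composing space if ``its Bass--Serre tree'' means the tree of $F_m=\mathbb{Z}*\cdots*\mathbb{Z}$, where a boundary generator is a vertex generator. The repair is to use that $Q$ is virtually free in all three cases and to act on a tree with \emph{finite} vertex stabilizers: the Cayley tree for $F_m$, and for $\mathbb{Z}/2p*\mathbb{Z}$ the Bass--Serre tree of the graph of groups with a single vertex group $\mathbb{Z}/2p$ and one loop carrying the trivial edge group. With such a tree, ``infinite order implies hyperbolic'' is correct and your argument for $t^a\notin\pi_1(T)$ goes through; but as written the proof fails on the cable case, which is the case the paper works hardest on.

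A second, smaller gap: producing a hyperbolic $g$ whose two fixed ends avoid $\Lambda(F)\cup\partial\ell$, where $F=\langle\bar{a}_1,\bar{h}_2'\rangle$, requires $\Lambda(F)$ to be a \emph{proper} subset of $\partial\mathcal{T}$, i.e.\ $F$ to have infinite index in $Q$. Non-elementariness and the existence of infinitely many distinct axes do not give this: if $F$ had finite index, every axis would have its ends in $\Lambda(F)=\partial\mathcal{T}$ and no admissible $g$ would exist. This is not vacuous --- if $F$ had finite index, an Euler characteristic count shows that no choice of $a$ whatsoever makes $\langle a,a_1,h_2'\rangle$ free of rank three, so the lemma tacitly relies on $a_1$ having been chosen in the preceding construction so that $\langle\bar{a}_1,\bar{h}_2'\rangle$ has infinite index. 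To be fair, the paper's own proof is equally silent on this point (it simply asserts that $a\in F_{m_1}$ can be chosen with $\langle a,a_1,h_2'\rangle$ free), so this is a gap you share with the paper rather than one you introduced; but since your argument makes the hypothesis explicit (through the limit set), you should also make its verification explicit.
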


In fact, $\langle a,a_1, h_1', h_2'\rangle =A(T_3)\le \pi_1(M_1)$.

\begin{proof}
We divide the argument into three cases.

\medskip
\textbf{Case 1.} $M_1$ is a torus knot exterior.

Recall that $A_1=\mathbb{Z}\times F_{m_1}$, where $F_{m_1}$ is the commutator subgroup of $\pi_1(M_1)$.
Since $m_1\ge 2$, $F_n\le F_{m_1}$ for any $n$.
Hence we can choose $a\in F_{m_1}$ so that $\langle a, a_1,h_2'\rangle$ is free.
This implies $a\not \in \pi_1(T)$, otherwise $a$ and $h_2'$ would commute.

We verify that $t^a\not \in \pi_1(T)$ for any $t\in \pi_1(T)-H$.

Let $\mu,\lambda$ be the meridian and longitude pair on $T$.
For a given $t=\mu^ih_1^j$ with $i\ne 0$, suppose 
$t^a=a^{-1}\mu^i h_1^j a =(\mu^i)^ah_1^j \in \pi_1(T)$.
Then $(\mu^i)^ah_1^j = \mu^k h_1^\ell$ for some integers $k,\ell$.
Hence $(\mu^i)^a\in \pi_1(T)$.
Then $(\mu^i)^{a\lambda}=(\mu^i)^a$, so $(\mu^i)^{a\lambda a^{-1}}=\mu^i$.
This means that $a\lambda a^{-1}$ commutes with $\mu^i$.
Since the centralizer of $\mu^i$ is $\pi_1(T)$ (\cite{AFW}),  $a\lambda a^{-1}\in \pi_1(T)$.
By homological reason, $a\lambda a^{-1}$ is a power of $\lambda$.
However, such a relation $a\lambda a^{-1}=\lambda^s$ is impossible in the commutator subgroup which is free,
because $a$ is not a power of $\lambda$.

\medskip
\textbf{Case 2.} $M_1$ is a composing space.

Then $M_1=D_{m_1}\times S^1$, and $A_1=\pi_1(M_1)$.
Hence $h_2'=h_2$.
(Possibly, $h_1'$ is a power of $h_1$.)
In this case, we can choose $a,a_1 \in \pi_1(D_{m_1})$ so that
$\langle a, a_1, h_2\rangle$ is a free group of rank three in $\pi_1(M_1)$.
In particular,  $a\not \in \pi_1(T)$.

Again, 
we see that $t^a\not \in \pi_1(T)$ for $t\in \pi_1(T)-H$.
For $\pi_1(T)$, choose the generators $\{h_1, d\}$, where $d$ is represented by  a component of $\partial D_{m_1}$.
Let $t=d^i h_1^j$ with $i\ne 0$.
Suppose  $t^a=a^{-1} d^i h_1^j a \in \pi_1(T)$.
Then $a^{-1}d^ih_1^j a=d^k h_1^\ell$ for some integers $k$ and $\ell$.
  Since $h_1$ is central, $(d^i)^a=d^kh_1^{\ell-j}$, so
$d^{-k}(d^i)^a=h_1^{\ell-j}$.
Since $a, d\in \pi_1(D_{m_1})$, $d^{-k}(d^i)^a \in \pi_1(D_{m_1})$.
Thus $\ell=j$, because $h_1$ is a regular fiber representing the $S^1$-factor of $A_1$.
Then we have a relation $a^{-1} d^i a=d^k$ in $\pi_1(D_{m_1})$.
This is impossible, because $a$ is not a power of $d$.

\medskip
\textbf{Case 3.}  $M_1$ is a cable space.

The cable space $M_1$ is obtained from a solid torus $D^2\times S^1$ by removing
a tubular neighborhood of a torus knot of type $(p,q)$,  lying on a smaller concentric torus.
A punctured meridian disk $D_p \ (p\ge 2)$ corresponds to the $F_p$-factor of $A_1=F_p\times \mathbb{Z}$.
The $\mathbb{Z}$-factor comes from a regular fiber.
Hence we can choose $a, a_1 \in \pi_1(D_p)$ so that $\langle a,a_1,h_2'\rangle$ is a free group of rank three.

We may assume that $T$ is the inner boundary component by the symmetry of $M_1$.
Again, we know $a\not \in \pi_1(T)$.
Let $\mu, \lambda$ be the meridian and longitude  pair on $T$.
For $t\in \pi_1(T)-H$,  assume that $t^a\in \pi_1(T)$.
If $t=\mu^i h_1^j$ with $i\ne 0$, then $t^a=a^{-1}\mu^i h_1^j a$. 
Thus $a^{-1}\mu^i h_1^j a=\mu^k h_1^\ell$, so $(\mu^i)^a\in \pi_1(T)$.
As in Case 1,  we have $a\lambda a^{-1} \in \pi_1(T)$, which implies $a\lambda a^{-1}=\lambda^s$ for some $s$ from homological reason.

Consider the finite cyclic cover $\widetilde{M_1}$  of $M_1$, corresponding
to the kernel of $\pi_1(M_1) \to H_1(M_1)=\langle \mu, c\rangle  \to \mathbb{Z}_p$,
sending $\mu\mapsto 0$ and $c\mapsto 1$, where
$c=\{*\}\times S^1$ on $\partial M_1-T$.
Then $\widetilde{M_1}\cong D_p \times S^1$, and the relation $a\lambda a^{-1} =\lambda^s$ yields
$\tilde{a}\tilde{\lambda}\tilde{a}^{-1}=\tilde{\lambda}^s$ for suitable lifts $\tilde{a}, \tilde{\lambda}$ of $a$ and $\lambda$.  This implies $s=1$, because $\pi_1(\widetilde{M_1})=F_p\times \mathbb{Z}$.
Hence we have $a\lambda a^{-1}=\lambda$ in $\pi_1(M_1)$.
However, this is impossible, because the centralizer of $\lambda$ is $\pi_1(T)$ and $a\not \in \pi_1(T)$
\end{proof}

\subsection{Embedding of $A(S_n+P_4)$}

From now, we assume that $M_1$ is a Seifert fibered piece of even type, and
$A(S_n) \le \pi_1(M_1)$.
By Lemma \ref{lem:peri1}, we take an element $a \in \pi_1(M_1)-\pi_1(T)$.
Also, applying Lemma \ref{lem:peri1} to $M_2$,  we have an element $b\in \pi_1(M_2)-\pi_1(T)$
with a similar property to $a$.
Recall that $h_i$ is a regular fiber of $M_i$, and set $H_i=\langle h_i \rangle\le \pi_1(T)$.

\begin{lemma}\label{lem:embed1}
For $x \in A(S_n)$ with $x\ne 1$, $x^{bab}$  gives a reduced word in the amalgamated free product $\pi_1(M_1\cup M_2)$
whose first and last lie in $\pi_1(M_2)-\pi_1(T)$.
\end{lemma}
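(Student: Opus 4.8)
The plan is to put $x^{bab}=b^{-1}a^{-1}b^{-1}\,x\,bab$ into amalgamated normal form in $\pi_1(M_1\cup M_2)=\pi_1(M_1)*_{\pi_1(T)}\pi_1(M_2)$ and read off its extreme syllables, splitting according to whether $x$ lies in the edge group $\pi_1(T)$. Recall that $a\in\pi_1(M_1)-\pi_1(T)$ and $b\in\pi_1(M_2)-\pi_1(T)$ are the elements furnished by Lemma \ref{lem:peri1} (applied to $M_1$ and to $M_2$), so that $t^{a}\notin\pi_1(T)$ whenever $t\in\pi_1(T)-H_1$ and $t^{b}\notin\pi_1(T)$ whenever $t\in\pi_1(T)-H_2$, where $H_i=\langle h_i\rangle$.

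First I would dispose of the case $x\in\pi_1(M_1)-\pi_1(T)$. Then the seven syllables $b^{-1},a^{-1},b^{-1},x,b,a,b$ already alternate between the two factors in the pattern $\pi_1(M_2),\pi_1(M_1),\pi_1(M_2),\pi_1(M_1),\pi_1(M_2),\pi_1(M_1),\pi_1(M_2)$, and each of them lies outside $\pi_1(T)$; hence the displayed word is already reduced, with first and last syllables $b^{-1}$ and $b$ in $\pi_1(M_2)-\pi_1(T)$, as required. The substantive case is $x\in\pi_1(T)$ with $x\neq1$. Here $b^{-1}xb=x^{b}\in\pi_1(M_2)$, three middle syllables coalesce, and $x^{bab}=b^{-1}a^{-1}\,x^{b}\,ab$; this is reduced with both extreme syllables in $\pi_1(M_2)-\pi_1(T)$ exactly when $x^{b}\notin\pi_1(T)$, which by the defining property of $b$ holds as soon as $x\notin H_2$. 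Everything therefore reduces to the claim
\[
A(S_n)\cap\pi_1(T)\cap H_2=\{1\}.
\]
I will deduce this from the stronger statement $A(S_n)\cap\pi_1(T)=H_1$ together with $H_1\cap H_2=\{1\}$: the latter holds because the Seifert fibrations of $M_1$ and $M_2$ do not match along $T$, so the fibers $h_1,h_2$ are linearly independent in $\pi_1(T)\cong\mathbb{Z}^2$.

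To prove $A(S_n)\cap\pi_1(T)=H_1$, let $v_0$ be the apex of $S_n$; in the construction of Proposition \ref{prop:torus1} and Remark \ref{rem:cable} one has $v_0=x^{p}$, so $v_0^{2}=h_1$, and $v_0\notin\pi_1(T)$ since $v_0$ is conjugate to a power of the exceptional fiber by Lemma \ref{lem:kb} and hence is not peripheral. Consider the central quotient $\phi\colon\pi_1(M_1)\to\bar{G}:=\pi_1(M_1)/\langle h_1\rangle$, which is $\mathbb{Z}/2p*\mathbb{Z}/q$ for a torus knot exterior and $\mathbb{Z}/2p*\mathbb{Z}$ for a cable space, and whose kernel is the center $H_1$. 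Since $H_1=\langle v_0^2\rangle\subseteq A(S_n)$, we have $\ker(\phi|_{A(S_n)})=H_1$, so for $x\in A(S_n)\cap\pi_1(T)$ it suffices to show $\phi(x)=1$. Now $\pi_1(T)=\langle\mu,h_1\rangle$ gives $\phi(\pi_1(T))=\langle c\rangle$ with $c=\phi(\mu)$ of infinite order (a hyperbolic element, not conjugate into a factor), while $\phi(A(S_n))=\langle\tau,\tau^{d},\dots,\tau^{d^{n}}\rangle$, where $\tau=\phi(v_0)$ has order two, $d=\phi(yxy)$, and $\phi(z_i)=\tau\,\tau^{d^{i}}$ because $z_i=v_0^{-1}v_0^{(yxy)^{i}}$. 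Thus $\phi(x)\in\langle\tau,\tau^{d},\dots,\tau^{d^{n}}\rangle\cap\langle c\rangle$, and the claim reduces to showing that this intersection is trivial in $\bar{G}$.

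I expect this last reduction—that no nontrivial power of the hyperbolic peripheral generator $c$ is a product of $\tau$ and its conjugates $\tau^{d^{i}}$—to be the main obstacle, since it amounts to excluding a peripheral $\mathbb{Z}^2$ inside $A(S_n)$ and cannot be settled by homology alone. I plan to resolve it by a normal-form analysis in the free product $\bar{G}$, parallel to the computation carried out in the proof of Proposition \ref{prop:torus1}, tracking the alternating syllable structure of the words in $\langle\tau,\tau^{d^{i}}\rangle$ against that of the powers $c^{k}$. Granting this, $\phi(x)=1$ forces $x\in H_1$, so $A(S_n)\cap\pi_1(T)=H_1$; combined with $H_1\cap H_2=\{1\}$ this yields $x\notin H_2$, hence $x^{b}\notin\pi_1(T)$, completing the reduction in the substantive case and with it the proof.
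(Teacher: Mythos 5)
Your first two cases ($x\in\pi_1(M_1)-\pi_1(T)$ and $x\in\pi_1(T)-H_2$) coincide with the paper's proof. The problem is the third case, $x\in H_2-\{1\}$: you try to show it is vacuous via the claim $A(S_n)\cap\pi_1(T)=H_1$, which you reduce to showing $\langle\tau,\tau^{d},\dots,\tau^{d^{n}}\rangle\cap\langle c\rangle=\{1\}$ in $\bar{G}=\pi_1(M_1)/\langle h_1\rangle$ --- and then you explicitly defer that step (``I expect this last reduction \dots to be the main obstacle'', ``Granting this \dots''). That deferred step is the crux of your argument and it is a genuine gap, not a routine verification. In particular it cannot be settled by the normal-closure considerations you set up: when $M_1$ is the exterior of a $(2,q)$-torus knot (so $p=1$), $\bar{G}\cong\mathbb{Z}/2*\mathbb{Z}/q$ and the image of $c$ in $\bar{G}/\langle\langle\tau\rangle\rangle\cong\mathbb{Z}/q$ has finite order, so $c^{q}$ \emph{does} lie in the normal closure of $\tau$; deciding whether it lies in the finitely generated subgroup $\langle\tau,\tau^{d},\dots,\tau^{d^{n}}\rangle$ requires exactly the delicate normal-form analysis you postpone, and the cable-space case $\bar{G}\cong\mathbb{Z}/2p*\mathbb{Z}$ would need a separate treatment. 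A smaller point: your ``exactly when $x^{b}\notin\pi_1(T)$'' is only the condition for that particular five-syllable expression to be reduced; it is not equivalent to the element $x^{bab}$ having a reduced form with extremes in $\pi_1(M_2)-\pi_1(T)$, which is why your reduction to ``$x\notin H_2$'' overshoots.

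Indeed, the entire detour is unnecessary, and this is where the paper's proof differs from yours: the case $x\in H_2$ is handled head-on. If $x\in H_2$ then $x$ is a power of the regular fiber $h_2$, which is central in $\pi_1(M_2)$, so $x^{b}=x$ and hence $x^{bab}=b^{-1}x^{a}b$. Since $h_1$ and $h_2$ are non-proportional in $\pi_1(T)\cong\mathbb{Z}^2$, we have $H_1\cap H_2=\{1\}$, so $x\neq 1$ lies in $\pi_1(T)-H_1$; the defining property of $a$ from Lemma \ref{lem:peri1} (applied to $M_1$, with $H=H_1$) then gives $x^{a}\in\pi_1(M_1)-\pi_1(T)$. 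Thus $b^{-1}x^{a}b$ is a reduced word of length three whose first and last syllables lie in $\pi_1(M_2)-\pi_1(T)$, as required. This three-line argument replaces your unproven intersection claim; note that Lemma \ref{lem:peri1} was stated with the exceptional set $\pi_1(T)-H$ precisely so that the two fibers can be played off against each other in this way.
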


\begin{proof}
If $x\in \pi_1(M_1)-\pi_1(T)$, then $b^{-1}a^{-1}b^{-1}x bab$ satisfies the conclusion.
If $x \in \pi_1(T)-H_2$, then $x^b \in \pi_1(M_2)-\pi_1(T)$.
Thus $b^{-1}a^{-1} x^b ab$ gives the reduced word.
Finally, if $x\in H_2$, then  $x\not \in H_1$ and
$x^{ba}=x^a \in \pi_1(M_1)-\pi_1(T)$, so
$b^{-1}x^{ba}b$ is reduced.
\end{proof}

\begin{lemma}\label{lem:embed2}
If $y\in A(P_4)$ with $y\ne 1$, then $y^{baba}$ gives a reduced word in 
 the amalgamated free product $\pi_1(M_1\cup M_2)$
whose first and last lie in $\pi_1(M_1)-\pi_1(T)$.
\end{lemma}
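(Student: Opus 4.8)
The plan is to use the Bass--Serre normal form for the amalgamated free product $G=\pi_1(M_1)*_{\pi_1(T)}\pi_1(M_2)$ and to track what happens to a reduced word for $y$ when it is flanked by the conjugating letters of $baba$. Write $C=\langle h_1',h_2'\rangle\le\pi_1(T)$. First I would record the structural facts that make the argument run. Since $A_1'=\langle h_1'\rangle\times\langle a_1,h_2'\rangle$ and $A_2'=\langle h_2'\rangle\times\langle a_2,h_1'\rangle$ are each isomorphic to $A(T_2)=\mathbb{Z}\times F_2$ with the displayed $\mathbb{Z}$ central, a centralizer computation in $\mathbb{Z}\times F_2$ gives $A_i'\cap\pi_1(T)=C$. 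Consequently a reduced word $y=s_1s_2\cdots s_m$ for $y$ in $A(P_4)=A_1'*_C A_2'$ has syllables $s_\ell\in A_i'\setminus C\subseteq\pi_1(M_i)-\pi_1(T)$ that alternate between the two factors, so the very same expression is already reduced in $G$, with every syllable outside $\pi_1(T)$.

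Next I would dispose of the degenerate case $y\in C\subseteq\pi_1(T)$, where the computation mirrors the sub-cases of Lemma~\ref{lem:embed1}. Writing $y=(h_1')^i(h_2')^j\neq1$: if $y\notin H_2$ then $y^b\in\pi_1(M_2)-\pi_1(T)$ by the defining property of $b$, and $y^{baba}=a^{-1}b^{-1}a^{-1}(y^b)aba$ is reduced; if $y\in H_2$ then $y$ is central in $\pi_1(M_2)$, so $y^{baba}=a^{-1}b^{-1}(y^a)ba$, and $y^a\in\pi_1(M_1)-\pi_1(T)$ by the defining property of $a$ (using $y\notin H_1$). In both cases the outermost letters are $a^{-1},a\in\pi_1(M_1)-\pi_1(T)$.

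The substantive case is $y\notin C$, with reduced form $s_1\cdots s_m$. Expanding $y^{baba}=a^{-1}b^{-1}a^{-1}b^{-1}\,s_1\cdots s_m\,baba$, the only letters adjacent to $y$ are copies of $b^{\mp1}\in\pi_1(M_2)-\pi_1(T)$, so a merge can occur on the left (resp.\ right) exactly when $s_1$ (resp.\ $s_m$) lies in $\pi_1(M_2)-\pi_1(T)$. The heart of the proof is that such a merge cannot land in $\pi_1(T)$: for $s_1=(h_2')^k w$ with $w\in\langle a_2,h_1'\rangle\setminus\langle h_1'\rangle$ one has $b^{-1}s_1=(h_2')^k(b^{-1}w)$, and $b^{-1}s_1\in\pi_1(T)$ would force $b^{-1}w$ to centralize $h_1'$, hence $b^{-1}w\in\langle h_1'\rangle$, i.e.\ $b\in\langle a_2,h_1'\rangle$, contradicting the rank-three freeness of $\langle b,a_2,h_1'\rangle$ from Lemma~\ref{lem:peri1}. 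The same computation handles $s_m b$ and the doubly merging case $m=1$, $s_1\in\pi_1(M_2)$ (where one instead checks $b^{-1}s_1b\notin\pi_1(T)$). After these at most single merges the word again alternates with all syllables outside $\pi_1(T)$, so it is reduced, and its first and last letters are $a^{-1},a\in\pi_1(M_1)-\pi_1(T)$.

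The main obstacle is exactly this merge analysis: one must guarantee that absorbing a boundary syllable of $y$ into the conjugating letter $b$ never produces an element of $\pi_1(T)$, since otherwise a cascade of cancellations could pull the endpoints of the word into $\pi_1(M_2)$ or shorten it unpredictably. Controlling this is precisely what the careful selection of $a,b$ in Lemma~\ref{lem:peri1}, together with the direct-product description $A(T_3)=\langle h_2'\rangle\times\langle b,a_2,h_1'\rangle$ and the identity $A_i'\cap\pi_1(T)=C$, is designed to supply.
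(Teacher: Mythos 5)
Your proof is correct, and it hits the same two pressure points as the paper's proof, namely the endpoint merges $b^{-1}s_1$ and $s_m b$, but it organizes the interior of the word in a genuinely different way. The paper never invokes the amalgam structure of $A(P_4)$: it writes $y$ as a word in the generators $a_1,h_1',h_2',a_2$, splits it at the nonzero powers of $a_1$, and then runs a five-type case analysis on the intervening segments (powers of $h_1',h_2'$; words in $a_2$; words in $h_1',a_2$; etc.), grouping the segments that lie in $\langle h_1',h_2'\rangle$ with their neighboring $a_1$-powers into blocks $U=a_1^{c_{i-1}}y_i\cdots y_{i+k}a_1^{c_{i+k}}$ and showing each block or segment lies in $\pi_1(M_i)-\pi_1(T)$ by a commutator-versus-freeness argument. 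Your identity $A_i'\cap\pi_1(T)=C$, obtained from the centralizer computation in $\mathbb{Z}\times F_2$, absorbs all of that bookkeeping at once: every syllable of the normal form of $A_1'*_C A_2'$ automatically avoids $\pi_1(T)$, so the interior is reduced for free and only the two boundary merges require work. Those you dispatch exactly as the paper does in Claim \ref{cl:embed1} for $b^{-1}y_1$ and $y_d b$: an element of $\pi_1(T)$ would commute with $h_1'$, forcing $b$ into $\langle a_2,h_1'\rangle$ and contradicting the rank-three freeness supplied by Lemma \ref{lem:peri1}. Your degenerate case $y\in C$ matches the sub-cases of the paper's first case (where $y$ contains no $a_1$); the remainder of that case, $y\in A_2'\setminus C$, appears in your setup as the $m=1$ doubly-merging case, which you correctly reduce to checking $b^{-1}s_1 b\notin\pi_1(T)$. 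The trade-off: your route is a real streamlining, replacing the hands-on word decomposition by a structural fact about the sub-amalgam, whereas the paper's version works directly with words in the generators and so never needs to know (or re-derive) that $\langle A_1',A_2'\rangle$ is the amalgamated product $A_1'*_C A_2'$ with edge group exactly $C$.
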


\begin{proof}
Recall that $A(P_4)=\langle a_1, h_1', h_2', a_2\rangle$.
First, we deal with the case where $y$ does not  contain a power of $a_1$.
Then $y\in \pi_1(M_2)$.
If $y^b \in \pi_1(M_2)-\pi_1(T)$, then 
$a^{-1}b^{-1}ay^b aba$ is reduced.
Suppose $y^b\in \pi_1(T)$.
If $y^b\not \in H_1$, then $(y^b)^a\in \pi_1(M_1)-\pi_1(T)$.
Thus $a^{-1}b^{-1}y^{ba}ba$ is reduced.
If $y^b \in H_1$, then $y^b \not \in H_2$, and $(y^b)^{ab}=(y^b)^b\in \pi_1(M_2)-\pi_1(T)$.
Therefore $a^{-1} y^{bab}a$ is reduced.

Thus we may assume that $y$ contains a non-zero power of $a_1$.
We write 
\begin{equation}\label{eq:y}
y=y_1 a_1^{c_1} y_2 a_1^{c_2}\dots y_{d-1}a_1^{c_{d-1}} y_d,
\end{equation}
where 
$y_i$ contains no $a_1^{\pm 1}$, 
$y_i\ne 1 \ (2\le i\le d-1)$, $c_i\in \mathbb{Z}-\{0\}$, and possibly $y_1=1$ or $y_d=1$.
Furthermore, we may assume that $y_i\ (2\le i\le d-1)$ is not a power of $h_1'$, since
$[a_1,h_1']=1$.
If $y_1$ is a non-zero power of $h_1'$, then $y_1$ is merged into $y_2$, or $y=a_1^{c_1}(h_1')^j$.
For the latter,  since $a_1^{c_1}\not \in T$, $y\in \pi_1(M_1)-\pi_1(T)$, then we are done.
Similarly, if $y_d$ is a power of $h_1'$, then $y_d$ is merged into $y_{d-1}$, or
$y=a_1^{c_{d-1}} (h_1')^j$.
The latter can be treated as above.
Hence, we may assume that any $y_i$ is not a non-zero power of $h_1'$.

Now, 
\begin{equation}\label{eq:y2}
y^{baba}=a^{-1}b^{-1}a^{-1}\cdot (b^{-1} y_1)\cdot a_1^{c_1} y_2 a_1^{c_2}\dots y_{d-1}a_1^{c_{d-1}}\cdot
( y_d b)\cdot aba.
\end{equation}

\begin{claim}\label{cl:embed1}
$b^{-1}y_1, y_d  b \in \pi_1(M_2)-\pi_1(T).$ 
\end{claim}

\begin{proof}[Proof of Claim \ref{cl:embed1}]
If $y_1=1$, then we are done.
Assume $y_1\ne 1$.
Since $y_1$ does not contain $a_1^{\pm 1}$, and $h_2'$ is central in $\pi_1(M_2)$,
we can write $y_1=(h_2')^i \hat{w}(h_1',a_2)$, where
$\hat{w}(h_1',a_2)$ involves only $(h_1')^{\pm 1}$ and $a_2^{\pm 1}$.
Possibly, $\hat{w}(h_1',a_2)=1$, and either letter does not appear.

Then $b^{-1}y_1=b^{-1}(h_2')^i \hat{w}(h_1',a_2)=(h_2')^i b^{-1} \hat{w}(h_1',a_2)$.
Note that $b^{-1}\hat{w}(h_1',a_2)\ne 1$ and that 
$b^{-1}\hat{w}(h_1',a_2)$ is not a power of $h_1'$,
because $\langle h_1',b,a_2\rangle$ is a free group of rank three.

If $b^{-1}y_1$ lies in $\pi_1(T)$, then $b^{-1}\hat{w}(h_1',a_2)\in \pi_1(T)$.
However, $[h_1', b^{-1}\hat{w}(h_1',a_2)]=1$ in $\pi_1(T)$, which
contradicts that $\langle h_1',b,a_2\rangle$ is a free group of rank three.

The argument for $y_db$ is similar.
\end{proof}

Here, we introduce a symbol $w(\ast,\ast,\dots)$.
For example, $w(h_2')$ is a non-zero power of $h_2'$, $w(h_1',h_2')$ is a word consisting
of only a non-zero power of $h_1'$ and $h_2'$, both of which appear.
We can suppose that such a word has the shortest length among the words representing the same element in $\pi_1(M_1\cup M_2)$.

For $y_i$ defined in (\ref{eq:y}),
there are five possibilities:
\begin{itemize}
\item[(1)] $w(h_2')$, $w(h_1',h_2')$,
\item[(2)] $w(a_2)$,
\item[(3)] $w(h_1',a_2)$,
\item[(4)] $w(h_2',a_2)=(h_2')^i a_2^j\ (i,j\ne 0)$,
\item[(5)] $w(h_1',h_2',a_2)=(h_2')^i w(h_1',a_2)\ (i\ne 0)$.
\end{itemize}

The remaining is to show that the middle segment of (\ref{eq:y2}),
\[
a_1^{c_1} y_2 a_1^{c_2}\dots  y_{d-1}a_1^{c_{d-1}},
\]
gives a reduced word whose first and last lie in $\pi_1(M_1)-\pi_1(T)$.

First, if $y_i$ has a form of (2), that is, a power of $a_2$, then $y_i\in \pi_1(M_2)-\pi_1(T)$.
 (4) is similar.
For (3), if $y_i=w(h_1',a_2)\in \pi_1(T)$, then $[h_1', y_i]=1$.
Since $y_i$ is not a power of $h_1'$, this contradicts that $\langle h_1', a_2\rangle$ is a free
group of rank two.
For (5), if $y_i=(h_2')^i w(h_1',a_2)\in \pi_1(T)$, then  
$w(h_1',a_2) \in \pi_1(T)$, so $[h_1', w(h_1',a_2)]=1$, a contradiction again.
Thus whenever $y_i$ has a form of (2)--(5), $y_i\in \pi_1(M_2)-\pi_1(T)$.

Finally, among $y_2,\dots, y_{d-1}$, we look at them of type (1).
If $y_2$ is of type (1) but $y_3$ is not of type (1), then
set $U=a_1^{c_1}y_2a_1^{c_2}$.
Clearly, $U\in \pi_1(M_1)$.
Assume $U\in \pi_1(T)$ for a contradiction.
We can write $U=(h_1')^i w(h_2',a_1)$ with $w(h_2',a_1)\ne 1$.
Then $w(h_2',a_1)\in \pi_1(T)$, so $[h_2', w(h_2',a_1)]=1$, which
contradicts that $\langle h_2',a_1\rangle$ is a free group of rank two.
Thus $U\in \pi_1(M_1)-\pi_1(T)$.
This argument works when $y_i$ is of type (1), but neither $y_{i-1}$ nor $y_{i+1}$ is of type (1).

If there are successive $y_i$'s of type (1), $y_i, y_{i+1},\dots, y_{i+k}$, say, then  
set 
\[
U=a_1^{c_{i-1}} y_i \dots  y_{i+k} a_1^{c_{i+k}}.
\]
(Here, $y_{i-1}$ and $y_{i+k+1}$ are not of type (1).)
Then the same argument shows $U\in \pi_1(M_1)-\pi_1(T)$.
\end{proof}

We prove that $A(S_n + P_4)$ can embed into $\pi_1(M_1\cup M_2)$.

\begin{proposition}\label{prop:ss-final}
In $\pi_1(M_1\cup M_2)$, 
\[
\langle A(S_n)^{bab}, A(P_4)^{baba}\rangle =A(S_n)^{bab} * A(P_4)^{baba}.
\]
\end{proposition}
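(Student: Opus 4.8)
The plan is to use the standard criterion for recognizing an internal free product of two subgroups. Writing $P=A(S_n)^{bab}$ and $Q=A(P_4)^{baba}$, it suffices to show that every nonempty alternating product
\[
W=w_1w_2\cdots w_k,
\]
with each $w_i\neq 1$, each $w_i$ in $P$ or in $Q$, and consecutive $w_i,w_{i+1}$ lying in different subgroups, represents a nontrivial element of $\pi_1(M_1\cup M_2)$. Since conjugation is an automorphism of the ambient group, $P\cong A(S_n)$ and $Q\cong A(P_4)$, so once this no-cancellation property is established it shows that $\langle P,Q\rangle$ is the internal free product $P*Q\cong A(S_n)*A(P_4)=A(S_n+P_4)$.

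Next I would feed each syllable $w_i$ through the two lemmas already proved. If $w_i\in P\setminus\{1\}$, then $w_i=x^{bab}$ for some $x\in A(S_n)\setminus\{1\}$, and Lemma \ref{lem:embed1} supplies a reduced word for $w_i$ in the amalgamated free product $\pi_1(M_1\cup M_2)=\pi_1(M_1)*_{\pi_1(T)}\pi_1(M_2)$ whose first and last syllables lie in $\pi_1(M_2)-\pi_1(T)$. If instead $w_i\in Q\setminus\{1\}$, then $w_i=y^{baba}$ for some $y\in A(P_4)\setminus\{1\}$, and Lemma \ref{lem:embed2} supplies a reduced word for $w_i$ whose first and last syllables lie in $\pi_1(M_1)-\pi_1(T)$. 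In particular each $w_i$ has positive syllable length.

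The heart of the argument is then the junction analysis, which is where the careful bookkeeping in the two lemmas pays off. At a junction $w_iw_{i+1}$ the two factors belong to different subgroups $P,Q$, so either the last syllable of $w_i$ lies in $\pi_1(M_2)-\pi_1(T)$ while the first syllable of $w_{i+1}$ lies in $\pi_1(M_1)-\pi_1(T)$, or vice versa. In both cases the two adjacent syllables lie in \emph{different} free factors of the amalgamated free product, so no coalescence or cancellation can occur at the junction, and the concatenation of the reduced words for $w_1,\dots,w_k$ is again reduced. This is the step I expect to carry the proof; fortunately Lemmas \ref{lem:embed1} and \ref{lem:embed2} were arranged precisely so that the conjugating words $bab$ and $baba$ push the endpoints of $P$-syllables and $Q$-syllables into opposite factors, so the would-be obstacle is already dissolved before reaching this point.

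Finally, since $W$ is thereby written as a reduced word of syllable length at least $k\ge 1$, the normal form theorem for amalgamated free products (\cite{MKS}) gives $W\neq 1$. Thus no nontrivial reduced alternating product collapses, which is exactly the assertion $\langle P,Q\rangle=P*Q$, completing the proof.
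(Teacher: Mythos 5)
Your proposal is correct and follows essentially the same route as the paper: the paper's proof is exactly the argument of Claim \ref{cl:SHif} transplanted to this setting, namely substituting the reduced forms supplied by Lemmas \ref{lem:embed1} and \ref{lem:embed2} into an alternating product and observing that at every junction the adjacent syllables lie in $\pi_1(M_2)-\pi_1(T)$ and $\pi_1(M_1)-\pi_1(T)$ respectively, so the concatenation is reduced and hence nontrivial by the normal form theorem for amalgamated free products.
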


\begin{proof}
The argument is similar to that of Claim \ref{cl:SHif}.  (Use Lemmas \ref{lem:embed1} and \ref{lem:embed2}.) \end{proof}
%Let $w\in  \langle A(S_n)^{bab}, A(P_4)^{baba}\rangle$ be a non-empty reduced word.
%Assume $w=1$ in $G$.
%We may assume that 
%\[
%w=u_1 v_1 u_2 v_2 \dots  u_s v_s,
%\]
%where $s\ge 1$,  $u_i\in A(S_n)^{bab}$ and $v_i \in A(P_4)^{baba}$ after conjugation.

%Then each $u_i$ (resp. $v_i$) can be written as $u_i=(u_i')^{bab}$ with $u_i'\in A(S_n)$
%(resp. $v_i=(v_i')^{baba}$ with $v_i' \in A(P_4)$).
%Hence
%\[
%5w=(u_1')^{bab} (v_1')^{baba} \dots (u_s')^{bab} (v_s')^{baba}.
%\]

%By Lemmas \ref{lem:embed1} and \ref{lem:embed2},
%this gives a non-empty reduced word, a contradiction.
%\end{proof}

\subsection{Final argument}

\begin{theorem}\label{thm:ssif}
Assume that $E(K)$ contains a Seifert-Seifert gluing, and there is at least one Seifert fibered piece
of even type.
Let $\Gamma=\sum S_{n_i} +F$ for $n_i\ge 1$, where $F$ is a forest and $\Gamma$ has at least one sink star.
Then $A(\Gamma)$ can embed into $G(K)$.
\end{theorem}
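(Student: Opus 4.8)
The plan is to reduce $A(\Gamma)$ to a single free product of several copies of $A(S_n)$ with one copy of $A(P_4)$, and then to realize that free product inside the subgroup $A(S_n)*A(P_4)$ of $G(K)$ already produced in Proposition \ref{prop:ss-final}. Write $n=\max_i n_i$ and let $r$ be the number of sink-star summands of $\Gamma$. Since each $S_{n_i}$ is an induced subgraph of $S_n$, Lemma \ref{lem:induce} gives $A(S_{n_i})\le A(S_n)$, so $A(\sum_i S_{n_i})=A(S_{n_1})*\cdots*A(S_{n_r})$ embeds into the $r$-fold free product $R:=A(S_n)*\cdots*A(S_n)$. Moreover $A(P_4)\le\pi_1(M_1\cup M_2)$ by Katayama's construction recalled above, and $A(F)\le A(P_4)$ for every forest $F$ by \cite{KK}. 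Hence $A(\Gamma)=A(\sum_i S_{n_i})*A(F)\le R*A(P_4)$, and it suffices to embed $R*A(P_4)$ into $G(K)$.

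By Proposition \ref{prop:ss-final} we have $A(S_n)*A(P_4)\le\pi_1(M_1\cup M_2)\le G(K)$; abbreviate $\mathcal{A}=A(S_n)$ and $\mathcal{B}=A(P_4)$, viewed as the two free factors. The remaining step is the purely group-theoretic assertion that, for nontrivial $\mathcal{B}$, the free product $\mathcal{A}*\cdots*\mathcal{A}$ ($r$ copies) $*\,\mathcal{B}$ embeds into $\mathcal{A}*\mathcal{B}$. I would prove this by a normal-form (ping-pong) argument of exactly the type used in Lemmas \ref{lem:embed1} and \ref{lem:embed2}: fixing $\beta\in\mathcal{B}\setminus\{1\}$ and distinct $\alpha_1,\dots,\alpha_r\in\mathcal{A}\setminus\{1\}$, set $g_j=\beta\alpha_j\beta$ and consider the subgroups $\mathcal{A}^{g_1},\dots,\mathcal{A}^{g_r}$ together with $\mathcal{B}$. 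Every nontrivial element of $\mathcal{A}^{g_j}$ has reduced form beginning and ending with a $\mathcal{B}$-syllable $\beta^{\pm1}$, so the outer $\beta$'s ``protect'' the buried $\mathcal{A}$-letters, and distinct $\alpha_j$ place the $\mathcal{A}^{g_j}$ at distinct cosets; a reduced-word bookkeeping then shows that every nonempty alternating product in these $r+1$ subgroups is nontrivial, so that they generate $\mathcal{A}^{g_1}*\cdots*\mathcal{A}^{g_r}*\mathcal{B}\cong R*A(P_4)$. Combining this with the first paragraph yields $A(\Gamma)\le G(K)$.

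The hard part is precisely this last free-product embedding, and it genuinely requires a ping-pong argument rather than a soft one. A single even-type Seifert piece cannot house the $r$ sink-star groups as free factors: every copy of $A(S_n)$ inside $\pi_1(M_1)$ contains, as its central element, the unique central regular fiber $h$ of $\pi_1(M_1)$ (namely $h=a^2$ for the head vertex $a$, by Lemma \ref{lem:kb}), so any two such copies intersect in $\langle h\rangle$ and can never be in free-product position there. The copies must therefore be spread out inside the ambient free product $\mathcal{A}*\mathcal{B}$, where $h$ is no longer central and the fibers $h^{g_j}$ of the various conjugates become distinct; this is exactly the room created by Proposition \ref{prop:ss-final}.

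One also cannot obtain $R*A(P_4)$ from a finite cover of $M_1\cup M_2$. Such a cover either reconstructs the whole group (if one full factor is retained while the other is conjugated) or it replaces $A(P_4)$ by a proper finite-index subgroup, which need not contain $A(F)$ when $F$ is large; the extreme case is $F=P_4$, where $A(F)=A(P_4)$ leaves no room for a nontrivial cyclic quotient and hence no room for additional sink stars. Retaining $A(P_4)$ intact for the forest while simultaneously splitting off the $r$ directed sink-star factors is precisely what the protected conjugators $g_j=\beta\alpha_j\beta$ are designed to achieve, and verifying that they do so is the technical heart of the argument.
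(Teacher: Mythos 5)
Your purely group-theoretic reduction is correct and is a genuinely different route from the paper's: the conjugation ping-pong with $g_j=\beta\alpha_j\beta$ does embed $\mathcal{A}^{g_1}*\cdots*\mathcal{A}^{g_r}*\mathcal{B}$ into $\mathcal{A}*\mathcal{B}$ (the outer $\beta^{\pm1}$-syllables protect the inner letters, and distinctness of the $\alpha_j$ kills the only possible collapses at junctions between different conjugates), whereas the paper instead embeds $A(\sum S_{n_i})$ into $A(S_n)*\mathbb{Z}$ by a covering-space argument and then absorbs $\mathbb{Z}*A(F)=A(P_1+F)$ into $A(P_4)$ via Kim--Koberda \cite{KK}. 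Either reduction works once one knows $A(S_n)*A(P_4)\le G(K)$.

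The genuine gap is the sentence ``By Proposition \ref{prop:ss-final} we have $A(S_n)*A(P_4)\le\pi_1(M_1\cup M_2)\le G(K)$.'' Proposition \ref{prop:ss-final} is proved under the standing assumption, made at the start of the subsection in which it appears, that $M_1$ --- one of the two Seifert pieces forming the Seifert--Seifert gluing --- is itself of even type, so that $A(S_n)\le\pi_1(M_1)$. The hypotheses of Theorem \ref{thm:ssif} do not guarantee this: they only say that $E(K)$ contains a Seifert--Seifert gluing somewhere and an even-type Seifert piece somewhere, and these may lie in different parts of the JSJ decomposition (e.g.\ both glued pieces of odd type, with the unique even-type cable space adjacent only to hyperbolic pieces). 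In that situation your invoked inclusion actually fails: by Lemma \ref{lem:kb} any Klein-bottle subgroup of $G(K)$ is conjugate into an even-type piece, and a Bass--Serre argument shows that a non-abelian subgroup of $\pi_1(M_1\cup M_2)$ cannot be conjugate into the fundamental group of a JSJ piece outside $M_1\cup M_2$ (such an intersection fixes an edge of the JSJ tree, hence is abelian); so no copy of $A(S_n)$ lives in $\pi_1(M_1\cup M_2)$ at all. This is exactly why the paper's proof splits into two cases: when no even-type piece is adjacent to a Seifert piece, it chooses a boundary torus $T$ of the even-type piece $M$ splitting $E(K)$ into $N_1\supset M$ and $N_2\supset M_1\cup M_2$, writes $G(K)=\pi_1(N_1)*_{\pi_1(T)}\pi_1(N_2)$, and uses malnormality of $\pi_1(T)$ in $\pi_1(N_2)$ (the piece of $N_2$ meeting $T$ being hyperbolic) to put conjugates $A(S_n)^{ba}$ and $A(P_4)^{ab}$ in free-product position (Claim \ref{cl:last1}). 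You must supply this second case (or an equivalent argument); once $A(S_n)*A(P_4)\le G(K)$ is established in both cases, your reduction completes the proof.
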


\begin{proof}
First, assume that $E(K)$ contains a Seifert fibered piece $M_1$ of even type which
is glued to a Seifert fibered piece $M_2$.
Let $n=\max\{n_i\}$.
By Proposition \ref{prop:ss-final}, we can embed $A(S_n+P_4)$ into $\pi_1(M_1\cup M_2)\le G(K)$.
Recall that $A(F+P_1)\le A(P_4)$ (\cite{KK}), since $F+P_1$ is still a forest.
Thus 
$A(S_n)*A(P_1)*A(F)\le A(S_n)*A(P_4)\le G(K)$.
Here, we can embed $A(\sum S_{n_i})\le A(S_n)*\mathbb{Z}$ as in the proof of Proposition \ref{prop:SHif}.
Hence 
\[
A(\Gamma)=A(\sum S_{n_i}+F)=A(\sum S_{n_i})*A(F)\le A(S_n)*A(P_1)*A(F)\le G(K).
\]

%%%%%
Second, we assume that there is no Seifert fibered piece of even type which is adjacent to a Seifert fibered piece.
Let $M$ be a Seifert fibered piece of even type.   We have $A(S_n)\le \pi_1(M)$ for any $n$.
%Then $M$ is adjacent to a hyperbolic piece $H$.
%(By Proposition \ref{prop:SHif}, $A(\sum S_{n_i})\le \pi_1(M\cup H)$, but we do not need this.)
By the assumption, there is a Seifert-Seifert gluing.
Let $M_1$ and $M_2$ be such Seifert fibered pieces.
We know that $A(F)\le A(P_4)\le \pi_1(M_1\cup M_2)$.

Let us choose a boundary component $T$ of $\partial M$ such that
$T$ splits $E(K)$ into $N_1$ and $N_2$, and
$M\subset N_1$, $M_1\cup M_2\subset N_2$.
In particular,  $T$ is a boundary component of a hyperbolic piece $H$.
Then $G(K)=\pi_1(N_1)*_{\pi_1(T)} \pi_1(N_2)$.
We remark that $\pi_1(T)$ is malnormal in $\pi_1(N_2)$ \cite{HW}.

Take $a\in \pi_1(N_1)-\pi_1(T)$ and $b\in \pi_1(N_2)-\pi_1(T)$.

\begin{claim}\label{cl:last1}
\begin{itemize}
\item[(1)]
For $x\in A(S_n)$ with $x\ne 1$,  $x^{ba}$ gives a reduced word whose first and last lie in $\pi_1(N_1)-\pi_1(T)$.
\item[(2)]
For $y\in A(P_4)$ with $y\ne 1$,  $y^{ab}$ gives a reduced word whose first and last lie in $\pi_1(N_2)-\pi_1(T)$.
\end{itemize}
\end{claim}

\begin{proof}[Proof of Claim \ref{cl:last1}]
(1)
%If $x\in \pi_1(N_1)-\pi_1(T)$, then $a^{-1}b^{-1}xba$ gives such a reduced word.
%Otherwise, $x\in \pi_1(T)$, so $x^b \in \pi_1(N_2)-\pi_1(T)$ by the malnormality.
%Hence $a^{-1}x^b a$ is reduced.
The argument is the same as (1) of Claim \ref{cl:SHif1}.

(2) If $y\in \pi_1(N_2)-\pi_1(T)$, then $b^{-1}a^{-1}yab$ gives a reduced word.
Suppose $y\in \pi_1(T)$.
If $y^a \in \pi_1(N_1)-\pi_1(T)$, then $b^{-1}y^ab$ is reduced.
Otherwise, $(y^a)^b \in \pi_1(N_2)-\pi_1(T)$ by the malnomality.
Thus $y^{ab}$ itself is reduced.
\end{proof}

As in Proposition \ref{prop:ss-final}, 
we can show that $\langle A(S_n)^{ba}, A(P_4)^{ab} \rangle = \langle A(S_n)^{ba} \rangle * \langle A(P_4)^{ab} \rangle$ in $G(K)$. 
%Let $w\in \langle A(S_n)^{ba}, A(P_4)^{ab} \rangle$ be a non-empty reduced word.
%Assume $w=1$.
%We can assume that 
%\[
%w=u_1 v_1 u_2 v_2 \dots  u_s v_s,
%\]
%where $s\ge 1$,  $u_i\in A(S_n)^{ba}$ and $v_i \in A(P_4)^{ab}$ after conjugation.
%Each $u_i$ (resp. $v_i$) can be written as $u_i=(u_i')^{ba}$ with $u_i'\in A(S_n)$
%(resp. $v_i=(v_i')^{ab}$ with $v_i' \in A(P_4)$).
%Hence
%\[
%w=(u_1')^{ba} (v_1')^{ab} \dots (u_s')^{ba} (v_s')^{ab}.
%\]
%By Claim \ref{cl:last1},  this gives a non-empty reduced word, so $w\ne 1$, a contradiction.
The rest of the argument is the same as the first situation.
\end{proof}

%%%%
\begin{proof}[Proof of Theorem \ref{thm:ss}]
The  ``if'' part is Theorem \ref{thm:ssif}.
We prove the ``only if' part.
By Lemma \ref{lem:pride} and Theorem \ref{thm:katayama},
$\bar{\Gamma}$ is a forest.
On the other hand, each connected component of $\Gamma$ having a directed edge is a sink star by Corollary \ref{cor:direct}.
 \end{proof}

\begin{proof}[Proof of Theorem \ref{thm:main}]
This immediately follows from 
Corollary \ref{cor:h}, Theorems \ref{thm:torusknot},
\ref{thm:SH} and \ref{thm:ss}.
\end{proof}

%%%%%%%%%%
\section*{Acknowledgment}

We would like to thank Ryoya Kai
for helpful conversations.

%%%%%%%%%%%%%%%%%%%%%%%
\bibliographystyle{amsplain}

\end{document}